\documentclass{article}

\RequirePackage[OT1]{fontenc}
\RequirePackage{amsthm,amsmath,amssymb}
\RequirePackage[numbers,square]{natbib}
\RequirePackage[colorlinks]{hyperref}

\usepackage{bbm}

\newcommand*\Bell{\ensuremath{\boldsymbol\ell}}

\theoremstyle{definition}
\newtheorem{theorem}{Theorem}[section] 

\newtheorem{remark}[theorem]{Remark} 
\newtheorem{definition}[theorem]{Definition} 
\newtheorem{example}[theorem]{Example} 
\newtheorem{lemma}[theorem]{Lemma} 
\newtheorem{conjecture}[theorem]{Conjecture} 

\newcommand{\set}[1]{\{#1\}}
\newcommand{\divides}[2]{#1\mid#2}

\newcommand{\real}{\mathbb{R}}
\newcommand{\nn}{\mathbb{N}}
\newcommand{\sd}{\,|\,} 
\newcommand{\goesto}{\rightarrow} 

\DeclareMathOperator{\Exp}{E} 
\DeclareMathOperator{\Var}{Var} 
\DeclareMathOperator{\Cov}{Cov} 

\begin{document}

\title{The Combinatorics of Weighted Vector Compositions}



\author{Steffen Eger\\
Department of Computer Science\\
Technical University Darmstadt \\
{\it eger@ukp.informatik.tu-darmstadt.de}
}

\date{}
\maketitle

\begin{abstract}
A vector composition of a vector $\Bell$ is a matrix $\mathbf{A}$ whose rows sum to
$\Bell$. We define a weighted vector composition as a vector
composition in which the column values of $\mathbf{A}$ may appear in
different colors. 
We study vector compositions from
different viewpoints: (1) We show how they are related to sums of random
vectors and (2) how they allow to derive formulas for partial
derivatives of composite functions. (3) We 
study congruence properties of the number of weighted vector
compositions, for fixed and arbitrary number of parts,
many of which are analogous to 
those of ordinary binomial coefficients and related quantities. Via the
Central Limit Theorem and their multivariate generating functions, 
(4) we also investigate the asymptotic behavior of several special cases
of numbers of weighted vector compositions. Finally, (5) we conjecture
an extension 
of a primality criterion due to Mann and Shanks \cite{Mann:1972} in
the context of weighted vector compositions. 
\end{abstract}

\section{Introduction}
An integer composition (ordered partition) of a nonnegative integer $n$ is a tuple
$(\pi_1,\ldots,\pi_k)$ of nonnegative integers whose sum is $n$. The
$\pi_i$'s are 
called the \emph{parts} of the
composition. For fixed number $k$ of parts, the number of
\emph{$f$-weighted} 
integer compositions---also called \emph{$f$-colored integer
  compositions} in
the literature---in which each part size $s$ may occur in $f(s)$
different colors, is given by the \emph{extended binomial coefficient}
$\binom{k}{n}_{f}$ \cite{Eger:2013}. 

We generalize here the notion of weighted integer
compositions to \emph{weighted vector compositions}. 
For a vector
$\Bell\in\nn^N$, for $N\ge 1$, a vector composition
\cite{Andrews:1975} of $\Bell$ with
$k$ \emph{parts} is a matrix
$\mathbf{A}=[\mathbf{m}_1,\ldots,\mathbf{m}_k]\in \nn^{N\times k}$
such that $\mathbf{m}_1+\cdots+\mathbf{m}_k=\Bell$. We call a vector
composition \emph{$f$-weighted}, for a function $f:\nn^N\rightarrow \nn$,
when each part of `size' $\mathbf{m}$ may occur in one of $f(\mathbf{m})$
different colors in the composition. For example, for $N=2$ and $f$: 
\begin{align*}
  f\bigl((1,1)\bigr)=2, \quad f\bigl((1,0)\bigr)=1,\quad f\bigl((0,1)\bigr)=1 
\end{align*}
and $f(\mathbf{x})=0$ for all other $\mathbf{x}\in\nn^2$, there are
seven distinct $f$-weighted vector
compositions of $\Bell=(1,2)$, namely:
\begin{align*}
  &\left[\begin{pmatrix} 0 \\ 1\end{pmatrix}
    \begin{pmatrix}1 \\ 1\end{pmatrix}
    \right],\:\:
  \left[\begin{pmatrix} 1 \\ 1\end{pmatrix}
    \begin{pmatrix}0 \\ 1\end{pmatrix}
    \right],\:\:
  \left[\begin{pmatrix} 0 \\ 1\end{pmatrix}
    \begin{pmatrix}1 \\ 1\end{pmatrix}^{\Diamond}
    \right],\:\:
  \left[\begin{pmatrix} 1 \\ 1\end{pmatrix}^{\Diamond}
    \begin{pmatrix}0 \\ 1\end{pmatrix}
    \right],\\
   & \left[\begin{pmatrix} 0 \\ 1\end{pmatrix}
    \begin{pmatrix}0 \\ 1\end{pmatrix}
      \begin{pmatrix}1 \\ 0\end{pmatrix}
    \right],\:\:
   \left[\begin{pmatrix} 0 \\ 1\end{pmatrix}
    \begin{pmatrix}1 \\ 0\end{pmatrix}
      \begin{pmatrix}0 \\ 1\end{pmatrix}
    \right],\:\:
   \left[\begin{pmatrix} 1 \\ 0\end{pmatrix}
    \begin{pmatrix}0 \\ 1\end{pmatrix}
      \begin{pmatrix}0 \\ 1\end{pmatrix}
    \right]
\end{align*}
where $\Diamond$ distinguishes between the two values of $(1,1)$. 
For fixed number $k\ge 0$ of parts, we denote the number of distinct
$f$-weighted vector compositions of $\Bell\in\nn^N$ by
$\binom{k}{\Bell}_f$. Moreover, the number $c_f(\Bell)$ of
$f$-weighted vector compositions with arbitrarily many parts is then
given by $c_f(\Bell)=\sum_{k\ge 0}\binom{k}{\Bell}_f$. 

The number of $f$-weighted vector compositions with $k$ parts may be
represented as
\begin{align}\label{eq:repr1}
  \binom{k}{\Bell}_f =
  \sum_{\mathbf{m}_1+\cdots+\mathbf{m}_k=\Bell}f(\mathbf{m}_1)\cdots f(\mathbf{m}_k).
\end{align}
When the function $f$ takes values in $\real$ (or even in a
commutative ring), then the RHS of Eq.~(\ref{eq:repr}) gives the
\emph{total weight} of all vector compositions of $\Bell$ with $k$
parts, where we define the \emph{weight} {of a composition}
$[\mathbf{m}_1,\ldots,\mathbf{m}_k]$ as $f(\mathbf{m}_1)\cdots
f(\mathbf{m}_k)$.  

We study $f$-weighted vector compositions from several viewpoints. 
Section \ref{sec:rv} relates
weighted vector compositions to sums of random vectors. Section
\ref{sec:prelim} introduces basic identities for $\binom{k}{\Bell}_f$
which will be used in  
follow-up results. 
Section \ref{sec:partial} derives a formula for partial derivatives of
composite functions using
these identities. Our formula generalizes the famous formula of Fa\`{a} di
Bruno (see \cite{Johnson:2002}) for the higher order derivatives of a composite
function. 
Section \ref{sec:congruence1} gives
divisibility properties of $\binom{k}{\Bell}_f$ and in Section
\ref{sec:congruence2}, we derive congruences and identities for sums of
$\binom{k}{\Bell}_f$, including $c_f(\Bell)$. 
Our results in these two sections generalize corresponding results from \cite{Eger:2016,Shapcott:2013} for weighted integer compositions, and others for ordinary binomial coefficients. We also generalize here the notion of so-called $s$-color compositions in which a part of size $s$ may occur in $s$ different colors in a composition \cite{Agarwal:2000}. 
We discuss 
asymptotics of weighted vector
compositions in Section \ref{sec:asymp} and 
the primality
criterion of Mann and Shanks \cite{Mann:1972} in the context of weighted vector
compositions in Section \ref{sec:prime}. 

In the rest of this work, we use the following \textbf{notation} and \textbf{definitions}. 
We write vectors and matrices in bold font ($\mathbf{x},
\Bell,\ldots$) to distinguish them from `scalars' ($k,n,\ldots$). We
write vectors as row vectors ($(x,y,z),\ldots$). We write the
components of a vector $\mathbf{x}$ as $x_1,x_2,\ldots$ and similarly
for matrices. We use the
standard notation, $\binom{k}{n}$, for ordinary binomial coefficients,
which are a special case of our setup. They are retrieved when $N=1$
and $f(x)$ is the \emph{indicator function} on $\set{0,1}$, that is,
$f(x)=1$ for $x\in\set{0,1}$ and $f(x)=0$ for all other $x$. 

We let $\nn=\set{0,1,2,\ldots}$ be the set of nonnegative integers. Let $k\ge 0,N\ge 1$ and let $\Bell\in\nn^N$. Let $\mathbf{0}=\mathbf{0}_N=(0,\ldots,0)\in\nn^N$ and let $\mathbf{1}=\mathbf{1}_N=(1,\ldots,1)\in\nn^N$. 
Let 
\begin{align*}
  S(\Bell)=\set{\mathbf{s}\in\nn^N\sd \mathbf{s}\neq\mathbf{0},0\le
    {s}_j\le \ell_j, j=1,\ldots,N}
\end{align*}
be the set of all non-zero part sizes in $\nn^N$ `bounded from above' by
$\Bell$, and let $S_{\mathbf{0}}(\Bell)=S(\Bell)\cup\set{\mathbf{0}}$. Let the elements in $S(\Bell)$ or $S_{\mathbf{0}}(\Bell)$ be enumerated as $\mathbf{s}_1,\mathbf{s}_2,\ldots$. 
We denote by $\mathcal{P}^{(\mathbf{S_{\mathbf{0}}(\Bell)})}(\Bell;k)=\mathcal{P}(\Bell;k)$ the set
\begin{align*}
  \mathcal{P}(\Bell;k) = \set{\bigl(r_1,r_2,\ldots\bigr)\sd r_i\ge 0,\sum_{i\ge 1}r_i=k,\sum_{i\ge 1}r_i\mathbf{s}_i=\Bell}
\end{align*}
of \emph{vector partitions} (unordered compositions) of $\Bell$ with $k$ parts, including part size 
$\mathbf{0}$. Here, $r_1,r_2,\ldots$ are the \emph{multiplicities} of the
part sizes $\mathbf{s}_1,\mathbf{s}_2,\ldots$. We similarly define
$\mathcal{P}^{(\mathbf{S(\Bell)})}(\Bell;k)$ as the set
of vector partitions of $\Bell$ with $k$ parts, excluding
$\mathbf{0}$. We write
$\mathcal{P}^{(\mathbf{S_{\mathbf{0}}(\Bell)})}(\Bell)=\mathcal{P}(\Bell)$
for the set of vector partitions, part size $\mathbf{0}$ included, of $\Bell$
with arbitrary number of parts:
\begin{align*}
  \mathcal{P}(\Bell) = \set{\bigl(r_1,r_2,\ldots\bigr)\sd r_i\ge 0,\sum_{i\ge 1}r_i\mathbf{s}_i=\Bell}
\end{align*}
and analogously for $\mathcal{P}^{(\mathbf{S(\Bell)})}(\Bell)$.\footnote{When it is clear from context whether $\mathbf{0}$ is included or not, we may also write $\mathcal{P}(\Bell)$ for both $\mathcal{P}^{(\mathbf{S(\Bell)})}(\Bell)$ and $\mathcal{P}^{(\mathbf{S_{\mathbf{0}}(\Bell)})}(\Bell)$, and similarly for related quantities.} For a
scalar $a$ and a vector $\mathbf{b}$, we write $a|\mathbf{b}$, when
$a|b_i$ for all components $b_i$ of $\mathbf{b}$. 

{\textbf{Background}:}
Weighted vector compositions
generalize the concept of {vector compositions} introduced in
Andrews \cite{Andrews:1975}. In fact, vector compositions are
$f$-weighted vector compositions for which $f=f_{S_0}$ is the {indicator
function} on  
$S_0=\mathbb{N}^N-\set{\mathbf{0}_N}$.
For the 
same $f_{S_0}$, Munarini et 
al.\ \cite{Munarini:2008} introduce \emph{matrix compositions}. These
are matrices whose entries sum to a positive integer $n$ and whose columns are
non-zero. We find that the number $c^{(N)}(n)$ of matrix compositions of $n$ for matrices 
with $N$ rows satisfies
$c^{(N)}(n)=\sum_{\ell_1+\cdots+\ell_N=n}c_{f_{S_0}}(\ell_1,\ldots,\ell_N)$. 

Vector compositions are also closely
related to \emph{lattice path combinatorics} \cite{Stanley:2012}. Lattice paths
are paths from the origin $\mathbf{0}_N$ to some point 
$\Bell=(\ell_1,\ldots,\ell_N)\in\nn^N$ where each step lies in some set $S$. In our
case, each coordinate of each step $\mathbf{s}\in S$ is
nonnegative. Vector compositions 
also 
generalize the 
concept of \emph{alignments} considered in computational biology and
computational linguistics \cite{Griggs:1990}. For example, the number of
(standard) alignments of $N$ sequences of lengths
$\Bell=(\ell_1,\ldots,\ell_N)$ is given by $c_{f_{S_1}}(\Bell)$, where $f_{S_1}$ is the
indicator function on $S_1=\set{(s_1,\ldots,s_N)\sd s_i\in\set{0,1}}-\set{\mathbf{0}_N}$. When $f_{S}$ is the indicator
function on more `complex' $S\subseteq\nn^N$, 
$c_{f_S}$ counts 
``many-to-many'' alignments \cite{Eger:2016_align}.   

Weighted integer compositions, that is, the case when $N=1$, go back
to \cite{Moser:1961} and \cite{Hoggatt:1968,Hoggatt:1969}. Recently, they have attracted attention in the
form of so-called \emph{$s$-color compositions}, for which $f$ is
specified as identity function, that is, $f(s)=s$ \cite{Agarwal:2000,Guo:2014,Narang:2008,Agarwal:2016,Shapcott:2013}. 
More general $f$ have been considered in
\cite{Abrate:2014,Birmajer:2016,Eger:2013,Eger:2015,Eger:2016,Janjic:2016,Shapcott:2012}, to
name just a few.  
Results on standard
integer compositions, i.e., where $f$ is the indicator function on 
$\nn-\set{0}$ or a subset thereof, 
are found in \cite{Heubach:2004}.  

\section{Relation to multivariate random variables}\label{sec:rv}
Let $X_1,X_2,\ldots$ be i.i.d.\ discrete random vectors with common
distribution function $f(\mathbf{x})=P[X=\mathbf{x}]$, for
$\mathbf{x}\in\nn^N$. Then the distribution of the sum
$X_1+X_2+\cdots+X_k$ is given by
\begin{align*}
  P[X_1+\cdots+X_k=\Bell] &=
  \sum_{\mathbf{m}_1+\cdots+\mathbf{m}_k=\Bell}P[X_1=\mathbf{m}_1]\cdots
  P[X_k=\mathbf{m}_k] \\
  &= 
  \sum_{\mathbf{m}_1+\cdots+\mathbf{m}_k=\Bell}f(\mathbf{m}_1)\cdots
  f(\mathbf{m}_k) 
  = \binom{k}{\Bell}_f.
\end{align*}
Let $f$ be the discrete uniform measure on some $S\subseteq
\nn^N$. Then 
\begin{align*}
  P[X_1+\cdots+X_k=\Bell] = \left(\frac{1}{|S|}\right)^k\binom{k}{\Bell}_{g_S}
\end{align*} 
where $g_S$ is the {indicator function} on $S$.
Thus
$\binom{k}{\Bell}_{g_S}=|S|^kP[X_1+\cdots+X_k=\Bell]$. Moreover,
$P[X_1+\cdots+X_k=\Bell]$ may be approximated by the multivariate normal
distribution according to the multivariate Central Limit Theorem
(CLT). That is, for large $k$, $P[X_1+\cdots+X_k=\Bell]$ can be approximated by the density 
\begin{align*}
  (2\pi)^{-N/2}|\mathbf{\Sigma}_k|^{-1/2}\exp\left(-\frac{1}{2}(\Bell-\boldsymbol{\mu}_k)^\intercal\mathbf{\Sigma}_k^{-1}(\Bell-\boldsymbol{\mu}_k)\right)
\end{align*}
where $\boldsymbol{\mu}_k=k\boldsymbol{\mu}$ and $\mathbf{\Sigma}_k=k\mathbf{\Sigma}$ are the mean vector
and covariance matrix 
of
$X_1+\cdots+X_k$, respectively. 
Here, $\boldsymbol{\mu}$ is the mean vector of each $X_i$ and $\mathbf{\Sigma}$ is the covariance matrix among the components of $X_i$, where $|\mathbf{\Sigma}|$ denotes its determinant. 
The 
approximation holds for large $k$. 

\begin{example}
Let $S=\prod_{j=1}^N\set{0,1,\ldots,\nu_j}$, for integers $\nu_j>0$. 
Let $X_i$ be uniformly distributed on $S$, for all $i=1,\ldots,k$.
We have
$\boldsymbol{\mu}=\Exp[X_i]=(\nu_1/2,\ldots,\nu_N/2)$. Since the components of $X_i$ are independent of each other and since the variance of each component $j$ of $X_i$ is given by $\frac{(\nu_j+1)^2-1}{12}$ (variance of uniform distributed random variable on $\set{0,\ldots,\nu_j}$), we find that
\begin{align*}
|\mathbf{\Sigma}|=
\prod_{j=1}^N\frac{(\nu_j+1)^2-1}{12}.
\end{align*}
This leads to the approximation
\begin{align*}
  \binom{k}{k\boldsymbol{\mu}}_{g_S}\sim
  \frac{ \Bigl(\prod_j\nu_j+1\Bigr)^{k}}{(2\pi)^{N/2}k\sqrt{|\mathbf{\Sigma|}}}. 
\end{align*}  
When $N=1$ and $\nu_1=1$ we obtain the well-known approximation $\frac{2^{k+1}}{\sqrt{2\pi k}}$ for the central binomial coefficient $\binom{k}{k/2}$. 
\end{example}

\begin{example}
  Let $S=\set{(0,1),(1,0),(1,1)}$. 
  Let $X_i=(x,y)$ be uniformly distributed on $S$. We have 
  \begin{align*}
    P[x=0]=\frac{1}{3},\:\:
    P[x=1]=\frac{2}{3},\:\: P[xy=0]=\frac{2}{3},\:\:P[xy=1]=\frac{1}{3}. 
  \end{align*}
  Therefore
    $\Cov(x,y)=E[xy]-E[x]E[y]=1/3-(2/3)^2=3/9-4/9=-\frac{1}{9}$. Moreover $\Var(x)=\frac{2}{9}$ and thus 
  \begin{align*}
    \mathbf{\Sigma}=\begin{pmatrix}
    \frac{2}{9} & -\frac{1}{9} \\ -\frac{1}{9} & \frac{2}{9}
    \end{pmatrix},\quad
    \boldsymbol{\mu}=\begin{pmatrix}\frac{2}{3}\\ \frac{2}{3}\end{pmatrix}.
  \end{align*}
  Hence:
  \begin{align*}
     \binom{k}{k\boldsymbol{\mu}}_{g_S}\sim \frac{3^k}{2\pi k\sqrt{\frac{1}{27}}} =
     \frac{3^{k+1}}{2\pi k\sqrt{\frac{1}{3}}}
  \end{align*}
  For example, we have 
  \begin{align*}
    \binom{15}{(10,10)}_{g_S} = 756,756,
  \end{align*}
  while the approximation formula yields $791,096.70\ldots$, which amounts to a
  relative error of less than 5\%. Analogously,
  \begin{align*}
    \binom{18}{(12,12)}_{g_S} = 17,153,136,
  \end{align*}
  while the approximation formula yields $17,799,675.85\ldots$, which amounts to a
  relative error of less than 4\%.
\end{example}
The idea of deriving asymptotics of coefficients via the CLT, that
underlies our above approximations, has been developed in 
different works such as \cite{Eger:2014_central,Walsh:1995}; see
\cite{Mukho:2016} for a survey. While such results can also be
obtained via singularity or saddle point analysis methods using the generating
function for $\binom{k}{\Bell}_f$ in our case
\cite{Flajolet:2009,Ratsaby:2008}, using the CLT with suitably defined
random variables is an alternative that may guarantee additional
desirable properties such as uniform convergence \cite{Neuschel:2014}. 

\section{Basic identities}\label{sec:prelim} 
In the sequel, we write $\mathbf{x}^{\mathbf{s}}$ for
$x_1^{s_1}\cdots x_N^{s_N}$ where $\mathbf{x}=(x_1,\ldots,x_N)$ and
$\mathbf{s}=(s_1,\ldots,s_N)$. 

For $k\ge 0$ and $\ell_1,\ldots,\ell_N\ge 0$ and $f:\nn^N\rightarrow \real$, consider the coefficient of $\mathbf{x}^{\Bell}=x_1^{\ell_1}\cdots x_N^{\ell_N}$ of the  
power series $F$ in the variables $x_1,\ldots,x_N$, where: 
\begin{align}\label{eq:power}
  F(\mathbf{x};k)=\Bigl(\sum_{\mathbf{s}\in\nn^N}
  f(\mathbf{s})\mathbf{x}^{\mathbf{s}}\Bigr)^k, 
\end{align}
and denote it by $[\mathbf{x}^{\Bell}]F(\mathbf{x};k)$. 
Our first theorem states that 
$[\mathbf{x}^{\Bell}]F(\mathbf{x};k)$ 
denotes the combinatorial object we are investigating in this work,
the number of $f$-weighted vector compositions of $\Bell$ (with a fixed number, $k$, of parts). Therefore $F(\mathbf{x};k)$ is the
generating function for  $\binom{k}{\Bell}_f$. 

\begin{theorem}\label{theorem:main}
  We have that $[\mathbf{x}^{\Bell}]F(\mathbf{x};k) = \binom{k}{\Bell}_f$. 
\end{theorem}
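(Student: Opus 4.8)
The plan is to expand $F(\mathbf{x};k)$ by repeated distributivity (equivalently, by the multinomial theorem) and simply read off the coefficient of $\mathbf{x}^{\Bell}$. First I would observe that, since $f$ takes values in $\real$, the inner series $G(\mathbf{x})\defin\sum_{\mathbf{s}\in\nn^N}f(\mathbf{s})\mathbf{x}^{\mathbf{s}}$ is a well-defined element of the formal power series ring $\real[[x_1,\ldots,x_N]]$, hence so is $F(\mathbf{x};k)=G(\mathbf{x})^k$; in particular, for each fixed $\Bell\in\nn^N$, the coefficient $[\mathbf{x}^{\Bell}]F(\mathbf{x};k)$ is given by a finite sum and is unambiguous. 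Writing $G(\mathbf{x})^k$ as a product of $k$ identical factors and multiplying out, one gets
\begin{align*}
  F(\mathbf{x};k)=\sum_{\mathbf{m}_1\in\nn^N}\cdots\sum_{\mathbf{m}_k\in\nn^N}f(\mathbf{m}_1)\cdots f(\mathbf{m}_k)\,\mathbf{x}^{\mathbf{m}_1+\cdots+\mathbf{m}_k},
\end{align*}
the rearrangement of terms being legitimate because, grouped by the value of $\mathbf{m}_1+\cdots+\mathbf{m}_k$, only finitely many summands contribute to any prescribed monomial.

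Next I would extract the coefficient of $\mathbf{x}^{\Bell}$. Using $\mathbf{x}^{\mathbf{a}}\mathbf{x}^{\mathbf{b}}=\mathbf{x}^{\mathbf{a}+\mathbf{b}}$, a tuple $(\mathbf{m}_1,\ldots,\mathbf{m}_k)$ contributes its weight $f(\mathbf{m}_1)\cdots f(\mathbf{m}_k)$ to $[\mathbf{x}^{\Bell}]F(\mathbf{x};k)$ exactly when $\mathbf{m}_1+\cdots+\mathbf{m}_k=\Bell$, so that
\begin{align*}
  [\mathbf{x}^{\Bell}]F(\mathbf{x};k)=\sum_{\mathbf{m}_1+\cdots+\mathbf{m}_k=\Bell}f(\mathbf{m}_1)\cdots f(\mathbf{m}_k),
\end{align*}
which is precisely the right-hand side of Eq.~(\ref{eq:repr1}). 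Since each ordered tuple $(\mathbf{m}_1,\ldots,\mathbf{m}_k)$ with this sum corresponds to the vector composition $[\mathbf{m}_1,\ldots,\mathbf{m}_k]$ of $\Bell$, and $f(\mathbf{m}_1)\cdots f(\mathbf{m}_k)$ counts the ways of coloring its parts, the displayed sum equals $\binom{k}{\Bell}_f$ by definition, and the theorem follows. (Alternatively, one could collect equal columns and pass through the multinomial coefficients indexed by $\mathcal{P}(\Bell;k)$, but the direct expansion above is cleaner.)

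Finally I would dispose of the degenerate case $k=0$: the empty product gives $F(\mathbf{x};0)=1$, so $[\mathbf{x}^{\Bell}]F(\mathbf{x};0)$ is $1$ when $\Bell=\mathbf{0}$ and $0$ otherwise, which matches $\binom{0}{\Bell}_f$, since the unique composition with zero parts is empty and has sum $\mathbf{0}$. Every step is an elementary formal-power-series manipulation, so there is essentially no obstacle; the only point deserving a line of care is the justification that $F$ genuinely lives in $\real[[x_1,\ldots,x_N]]$ and that coefficient extraction therefore commutes with the (possibly infinite, when $f$ has infinite support) expansion of the $k$-fold product.
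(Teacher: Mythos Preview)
Your proof is correct and follows essentially the same approach as the paper: expand the $k$-fold product, collect terms to obtain $\sum_{\mathbf{m}_1+\cdots+\mathbf{m}_k=\Bell}f(\mathbf{m}_1)\cdots f(\mathbf{m}_k)$, and identify this with $\binom{k}{\Bell}_f$ via Eq.~(\ref{eq:repr1}). Your added remarks on well-definedness in $\real[[x_1,\ldots,x_N]]$ and the $k=0$ case are sound but go beyond what the paper records.
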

\begin{proof}
  Collecting terms in \eqref{eq:power}, we see that $[\mathbf{x}^{\Bell}]F(\mathbf{x};k)$ is given as 
  \begin{align}\label{eq:comp}
    \sum_{\mathbf{m}_1+\cdots+\mathbf{m}_k=\Bell}f(\mathbf{m}_1)\cdots f(\mathbf{m}_k),
  \end{align}
  where the sum is over all nonnegative vector solutions to
  $\mathbf{m}_1+\cdots+\mathbf{m}_k=\Bell$. Using \eqref{eq:repr1} proves the theorem. 
\end{proof}
Next, we list four identities for $\binom{k}{\Bell}_f$ 
which we will make use of in the proofs of (divisibility) properties of the number of vector compositions later
on.  
\begin{theorem}\label{prop:convolution} 
  Let $k\ge 0$ and $\Bell\in\nn^N$. Then, the following hold:
  \begin{align}
    \label{eq:repr}
    \binom{k}{\Bell}_f\: &\:= \sum_{(r_1,r_2,\ldots)\in \mathcal{P}(\Bell;k)}\binom{k}{r_1,r_2,\ldots}\prod_{\mathbf{s}_i}f(\mathbf{s}_i)^{r_i}\\ 
    \label{eq:vandermonde}
    \binom{k}{\Bell}_{f}\: &\:=
    \sum_{\mathbf{q}_1+\cdots+\mathbf{q}_r=\Bell}\binom{k_1}{\mathbf{q}_1}_{f}\cdots\binom{k_r}{\mathbf{q}_r}_{f}
\\
\label{eq:absorption}
\Bell\binom{k}{\Bell}_{f}\: &\:=
\frac{k}{i}\sum_{\mathbf{s}\in\nn^N}\mathbf{s}\binom{i}{\mathbf{s}}_{f}\binom{k-i}{\Bell-\mathbf{s}}_{f}\\
\label{eq:rec}
\binom{k}{\Bell}_{f}\: &\:=
    \sum_{i\in\nn}f(\mathbf{m})^i\binom{k}{i}\binom{k-i}{\Bell-\mathbf{m} i}_{f_{|f(\mathbf{m})=0}}
  \end{align}
  In \eqref{eq:repr}, 
  $\binom{k}{r_1,r_2,\ldots}=\frac{k!}{r_1!r_2!\cdots}$ denote the
  multinomial coefficients. In \eqref{eq:vandermonde}, which we will 
   call \emph{Vandermonde convolution}, 
  the sum is over
  all solutions 
  $\mathbf{q}_1,\ldots,\mathbf{q}_r$, $\mathbf{q}_i\in\nn^N$, of
  $\mathbf{q}_1+\cdots+\mathbf{q}_r=\Bell$, and the relationship holds for any fixed
  composition $(k_1,\ldots,k_r)$ of $k$, for $r\ge 1$. In
  \eqref{eq:absorption}, $i$ is an integer satisfying $0<i\le k$. In
  \eqref{eq:rec}, $\mathbf{m}\in\nn^N$ and by $f_{|{f(\mathbf{m})=0}}$ we
  denote the function $g:\nn^N\goesto\nn$ for which $g(\mathbf{s})=f(\mathbf{s})$, for
  all $\mathbf{s}\neq \mathbf{m}$, and $g(\mathbf{m})=0$. 
\end{theorem}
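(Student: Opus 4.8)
The plan is to prove all four identities uniformly: by Theorem~\ref{theorem:main} the quantity $\binom{k}{\Bell}_f$ is the coefficient $[\mathbf{x}^{\Bell}]F(\mathbf{x};k)$, so each identity will follow from a short algebraic manipulation of $F(\mathbf{x};k)=\bigl(\sum_{\mathbf{s}\in\nn^N}f(\mathbf{s})\mathbf{x}^{\mathbf{s}}\bigr)^k$ followed by extraction of that coefficient. One preliminary observation legitimizes every such manipulation: the coefficient of a fixed $\mathbf{x}^{\Bell}$ in $F(\mathbf{x};k)$ depends only on the finitely many values $f(\mathbf{s})$ with $\mathbf{s}\in S_{\mathbf{0}}(\Bell)$, so all sums that occur are finite and the rearrangements below are valid identities of formal power series even when $f$ has infinite support.

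For \eqref{eq:repr} I would expand $F(\mathbf{x};k)$ by the multinomial theorem as $\sum\binom{k}{r_1,r_2,\ldots}\bigl(\prod_i f(\mathbf{s}_i)^{r_i}\bigr)\mathbf{x}^{\sum_i r_i\mathbf{s}_i}$, the sum ranging over multiplicity vectors $(r_1,r_2,\ldots)$ with $\sum_i r_i=k$; extracting the coefficient of $\mathbf{x}^{\Bell}$ keeps exactly those vectors that additionally satisfy $\sum_i r_i\mathbf{s}_i=\Bell$, i.e.\ the elements of $\mathcal{P}(\Bell;k)$, which is the asserted formula. For \eqref{eq:vandermonde} I would use $k=k_1+\cdots+k_r$ to factor $F(\mathbf{x};k)=F(\mathbf{x};k_1)\cdots F(\mathbf{x};k_r)$ and note that the coefficient of $\mathbf{x}^{\Bell}$ in a product of power series is the convolution of the coefficients, so $[\mathbf{x}^{\Bell}]F(\mathbf{x};k)=\sum_{\mathbf{q}_1+\cdots+\mathbf{q}_r=\Bell}\prod_{j}[\mathbf{x}^{\mathbf{q}_j}]F(\mathbf{x};k_j)$; Theorem~\ref{theorem:main} rewrites both sides and yields the Vandermonde convolution.

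For \eqref{eq:rec}, set $g=f_{|f(\mathbf{m})=0}$ and split off the $\mathbf{m}$-term, writing $F(\mathbf{x};k)=\bigl(f(\mathbf{m})\mathbf{x}^{\mathbf{m}}+\sum_{\mathbf{s}}g(\mathbf{s})\mathbf{x}^{\mathbf{s}}\bigr)^k$; the ordinary binomial theorem turns this into $\sum_{i\in\nn}\binom{k}{i}f(\mathbf{m})^i\mathbf{x}^{\mathbf{m}i}\bigl(\sum_{\mathbf{s}}g(\mathbf{s})\mathbf{x}^{\mathbf{s}}\bigr)^{k-i}$, and extracting $[\mathbf{x}^{\Bell}]$ forces one to take $\mathbf{x}^{\Bell-\mathbf{m}i}$ from the last factor, giving $\binom{k-i}{\Bell-\mathbf{m}i}_{g}$. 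Finally, for \eqref{eq:absorption} I would introduce the Euler operator $\theta_j=x_j\,\partial/\partial x_j$, which sends $\mathbf{x}^{\Bell}$ to $\ell_j\mathbf{x}^{\Bell}$ and hence multiplies the coefficient of $\mathbf{x}^{\Bell}$ in any power series by $\ell_j$. Since $\theta_j$ is a derivation, differentiating $F(\mathbf{x};k)=F(\mathbf{x};1)^k$ yields $\theta_jF(\mathbf{x};k)=k\,F(\mathbf{x};1)^{k-1}\theta_jF(\mathbf{x};1)$, and likewise $\theta_jF(\mathbf{x};i)=i\,F(\mathbf{x};1)^{i-1}\theta_jF(\mathbf{x};1)$; substituting $F(\mathbf{x};1)^{k-1}=F(\mathbf{x};1)^{k-i}F(\mathbf{x};1)^{i-1}$ and $F(\mathbf{x};1)^{i-1}\theta_jF(\mathbf{x};1)=\frac1i\theta_jF(\mathbf{x};i)$ gives $\theta_jF(\mathbf{x};k)=\frac{k}{i}F(\mathbf{x};k-i)\,\theta_jF(\mathbf{x};i)$. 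Taking $[\mathbf{x}^{\Bell}]$, for which the right-hand side is a convolution with $[\mathbf{x}^{\mathbf{s}}]\theta_jF(\mathbf{x};i)=s_j\binom{i}{\mathbf{s}}_f$, and letting $j$ run over $1,\ldots,N$, produces \eqref{eq:absorption} coordinatewise.

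I do not expect a real obstacle: the whole theorem is the standard dictionary between operations on the generating function---taking powers, multiplying, applying the binomial and multinomial theorems, applying $x_j\,\partial/\partial x_j$---and the corresponding operations on weighted vector compositions. The two places that need a moment's care are the formal-power-series justification recorded in the first paragraph, and, in \eqref{eq:absorption}, checking that the non-integer factor $k/i$ arises legitimately---it comes precisely from the factorization $F(\mathbf{x};1)^{k-1}=F(\mathbf{x};1)^{k-i}\cdot F(\mathbf{x};1)^{i-1}$ together with the product rule for derivations, and is valid for every integer $i$ with $0<i\le k$. (Identity \eqref{eq:absorption} also admits a direct combinatorial proof: mark one of the $\ell_j$ units lying in coordinate~$j$ across the $k$ parts of an $f$-weighted composition of $\Bell$; by the symmetry among the $k$ part positions the marked unit lies among the first $i$ parts in an $i/k$ fraction of all such marked compositions, and that subcount equals $\sum_{\mathbf{s}}s_j\binom{i}{\mathbf{s}}_f\binom{k-i}{\Bell-\mathbf{s}}_f$ upon splitting the parts after position~$i$ and summing over the total $\mathbf{s}$ carried by the first block. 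The generating-function argument is shorter.)
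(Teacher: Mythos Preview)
Your proof is correct. The chief methodological difference from the paper is that you treat all four identities uniformly via the generating function $F(\mathbf{x};k)$, whereas the paper gives a separate argument for each: a combinatorial regrouping for \eqref{eq:repr}, a direct bijective decomposition into subcompositions for \eqref{eq:vandermonde}, a probabilistic argument for \eqref{eq:absorption}, and a combinatorial count by the multiplicity of $\mathbf{m}$ for \eqref{eq:rec}. For \eqref{eq:repr}, \eqref{eq:vandermonde}, and \eqref{eq:rec} the two approaches are essentially translations of one another---your multinomial expansion, factorization, and binomial split are exactly the algebraic shadows of the paper's combinatorial pictures. The genuinely different step is \eqref{eq:absorption}: the paper interprets $\binom{k}{\Bell}_f$ as $P[T_k=\Bell]$ for $T_k=X_1+\cdots+X_k$ with i.i.d.\ $X_j$ and computes the conditional expectation $\Exp[T_i\mid T_k=\Bell]=\frac{i}{k}\Bell$ by symmetry, then unpacks that expectation as a sum; you instead apply the Euler operator $\theta_j=x_j\partial/\partial x_j$ to $F(\mathbf{x};k)=F(\mathbf{x};1)^k$ and rearrange. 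Your route is self-contained and avoids importing the probabilistic framework of Section~\ref{sec:rv}; the paper's route makes the symmetry behind the factor $k/i$ more transparent and connects the identity to the random-vector interpretation that motivates much of the paper. Your parenthetical combinatorial marking argument is in fact a third proof, closer in spirit to the paper's than your main one.
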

\begin{proof}
  \eqref{eq:repr} follows from rewriting the sum in
  \eqref{eq:comp} as a summation over vector partitions rather than
  over vector compositions and then adjusting the factors in the sum.  
  \eqref{eq:vandermonde} follows because each vector composition of $\Bell$ with $k$
  parts can be subdivided into a fixed number $r$ of `subcompositions' with 
  $k_1,\ldots,k_r$ parts. These 
  represent weighted vector
  compositions of vectors $\mathbf{q}_i$ with $k_i$ parts and the
  subcompositions are independent of each other, given that the $\mathbf{q}_i$'s
  sum to $\Bell$. 
  In view of our previous discussions, we prove
  \eqref{eq:absorption} for sums of random vectors. For $0<i\le k$, let
  $T_i$ denote the partial sum $X_1+\cdots+X_i$ of i.i.d.\ random
  vectors $X_1,\ldots,X_i,\ldots,X_k$. Consider the
  conditional expectation $\Exp[T_i\sd T_k=n]$, for which the relation
  \begin{align*}
    \Exp[T_i\sd T_k=\Bell] = \frac{\Bell}{k}i,
  \end{align*}
  holds, by independent and identical distribution of
  $X_1,\ldots,X_k$. Moreover, by definition of conditional
  expectation, we have that 
  \begin{align*}
  \Exp[T_i\sd T_k=\Bell] = \sum_{\mathbf{s}\in\nn^N} \mathbf{s} \frac{P[T_i=\mathbf{s},T_k=\Bell]}{P[T_k=\Bell]} = \sum_{\mathbf{s}\in\nn^N} \mathbf{s}\frac{P[T_i=\mathbf{s}]\cdot P[T_{k-i}=\Bell-\mathbf{s}]}{P[T_k=\Bell]}.
\end{align*}
Combining the two identities for $\Exp[T_i\sd T_k=n]$ and rearranging
yields \eqref{eq:absorption}.
To prove \eqref{eq:rec}, let $\mathbf{m}\in\nn^N$. The part value
$\mathbf{m}$ may occur $i=0,\ldots,k$ times in a vector composition of
$\Bell$ with $k$ parts. When it occurs exactly $i$ times we are left with a
composition of $\Bell-\mathbf{m}i$ into $k-i$ parts in which
$\mathbf{m}$ does not occur anymore. The factor $\binom{k}{i}$
distributes the $i$ parts with value $\mathbf{m}$ among $k$ parts and the $i$ parts may be
colored independently into $f(\mathbf{m})$ colors.  
\end{proof}
\begin{remark}\label{rem:triangle}
Note the following important special case of 
\eqref{eq:vandermonde} which results when we let $r=2$ and $k_1=1$ and
$k_2=k-1$,
\begin{align*}
  \binom{k}{\Bell}_{f}=\sum_{\mathbf{s}\in\nn^N}f(\mathbf{s})\binom{k-1}{\Bell-\mathbf{s}}_f,
\end{align*}
which establishes that the quantities 
$\binom{k}{\Bell}_{f}$ 
may be perceived of as generating a ``Pascal triangle''-like array in
which entries 
in row $k$ are weighted sums of the entries in row
$k-1$. However, note that the entries $\Bell$ in rows $k$ themselves lie in an
$N$-dimensional space.
\end{remark}
We also note the following special cases of $\binom{k}{\Bell}_f$.
\begin{lemma}\label{lemma:f0}
  For all $k\in\nn,\mathbf{x}\in\nn^N$, we have that: 
  \begin{align*}
    \binom{k}{\mathbf{0}}_f &= f(\mathbf{0})^k,\\
    \binom{1}{\mathbf{x}}_f &= f(\mathbf{x}),\\
    \binom{0}{\mathbf{x}}_f &= \begin{cases}1, &
      \text{if } \mathbf{x}=\mathbf{0};\\ 0, & \text{otherwise}.\end{cases}
  \end{align*}
  \qed
\end{lemma}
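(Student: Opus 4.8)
The plan is to read off all three identities directly from the generating function in Theorem~\ref{theorem:main}, since each reduces to extracting a coefficient from a $k$-th power of the single formal power series $\phi(\mathbf{x})=\sum_{\mathbf{s}\in\nn^N}f(\mathbf{s})\mathbf{x}^{\mathbf{s}}$. Concretely, I would note that $F(\mathbf{x};k)=\phi(\mathbf{x})^k$ and that $\binom{k}{\Bell}_f=[\mathbf{x}^{\Bell}]\phi(\mathbf{x})^k$ by Theorem~\ref{theorem:main}.

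For the first identity, $\binom{k}{\mathbf{0}}_f=f(\mathbf{0})^k$, I would observe that the coefficient of $\mathbf{x}^{\mathbf{0}}=1$ in $\phi(\mathbf{x})^k$ picks out exactly the product of constant terms, and the constant term of $\phi$ is $f(\mathbf{0})$; alternatively, one uses \eqref{eq:repr1} and notes that $\mathbf{m}_1+\cdots+\mathbf{m}_k=\mathbf{0}$ forces every $\mathbf{m}_i=\mathbf{0}$, leaving the single term $f(\mathbf{0})^k$. For the second, $\binom{1}{\mathbf{x}}_f=f(\mathbf{x})$, I would simply take $k=1$ in \eqref{eq:repr1}: the only decomposition $\mathbf{m}_1=\mathbf{x}$ contributes $f(\mathbf{m}_1)=f(\mathbf{x})$. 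For the third, $\binom{0}{\mathbf{x}}_f$, I would use $F(\mathbf{x};0)=\phi(\mathbf{x})^0=1$, so $[\mathbf{x}^{\mathbf{x}}]F(\mathbf{x};0)$ is $1$ when $\mathbf{x}=\mathbf{0}$ and $0$ otherwise; equivalently, the empty sum/product convention in \eqref{eq:repr1} with $k=0$ gives the same — there is one empty composition, of the zero vector only.

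Each of the three lines is essentially a one-line specialization, so there is no real obstacle; the only point requiring a sentence of care is the $k=0$ case, where I would make explicit that the empty product equals $1$ and that the sole vector composition with zero parts is the composition of $\mathbf{0}$. Since the lemma is stated with \qed rather than an explicit proof in the excerpt, I would keep the argument to two or three sentences, citing \eqref{eq:repr1} (or equivalently Theorem~\ref{theorem:main}) as the single ingredient and noting that these base cases anchor the ``Pascal triangle'' recursion of Remark~\ref{rem:triangle}.
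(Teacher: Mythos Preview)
Your proposal is correct and matches the paper's treatment: the lemma is stated with an immediate \qed\ and no written proof, indicating the identities are meant to be read off directly from the definition \eqref{eq:repr1} (equivalently Theorem~\ref{theorem:main}), exactly as you do. Your brief remarks on the $k=0$ empty-product convention are the only point worth spelling out, and you handle it appropriately.
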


\newpage 
\section{Combinatorics of partial derivatives}\label{sec:partial}
The formula of Fa\`{a} di Bruno (1825-1888) describes the higher-order
derivatives 
of a composite function $G\circ F$ as a combinatorial sum of the derivatives of
the individual functions $G$ and $F$. Hardy \cite{Hardy:2006} generalizes this
formula to partial derivatives, arguing that treating variables in the
derivatives as distinct is more natural. We provide an alternative
derivation of the partial derivative formula which is based on
interpreting $G\circ F$ as the generating function for weighted
vector compositions. 
As a consequence, 
the formulas for partial derivatives of composite functions 
follow effortlessly from different identities for weighted vector compositions. 

For two power series $G:\real\rightarrow\real$ and $F:\real^N\rightarrow\real$ with $G(z)=\sum_{n\ge 0}g_nz^n$ and $F(\mathbf{z})=\sum_{\mathbf{s}\in\nn^N}f_{\mathbf{s}}\mathbf{z}^{\mathbf{s}}$, we first ask for the power series representation of $G\circ{F}$. We find that
\begin{align*}
  [\mathbf{z}^{\mathbf{s}}](G\circ F)(\mathbf{z}) &= [\mathbf{z}^{\mathbf{s}}]\sum_{n\ge 0}g_n\bigl(\sum_{\mathbf{s}\in\nn^N}f_{\mathbf{s}}\mathbf{z}^{\mathbf{s}}\bigr)^n =  
  \sum_{n\ge 0}g_n[\mathbf{z}^{\mathbf{s}}]\bigl(\sum_{\mathbf{s}\in\nn^N}f_{\mathbf{s}}\mathbf{z}^{\mathbf{s}}\bigr)^n \\
  &= \sum_{n\ge 0}g_n\binom{n}{\mathbf{s}}_f
\end{align*}
by Theorem \ref{theorem:main}. 
Hence, using \eqref{eq:repr1}, we obtain  
\begin{equation}\label{eq:compositionrepr}
  \begin{split}
  (G\circ F)(\mathbf{z}) &= \sum_{\mathbf{s}\in\nn^N}\mathbf{z}^{\mathbf{s}}\left(\sum_{n\ge 0}g_n\binom{n}{\mathbf{s}}_f\right) \\
  &=  \sum_{\mathbf{s}\in\nn^N}\mathbf{z}^{\mathbf{s}}\left(\sum_{n\ge 0}g_n\sum_{\pi\in \mathcal{C}(\mathbf{s};n)}f_{\mathbf{m}_1}\cdots f_{\mathbf{m}_n}\right)\\
  &=  \sum_{\mathbf{s}\in\nn^N}\mathbf{z}^{\mathbf{s}}\left(\sum_{n\ge 0}\sum_{\pi\in \mathcal{C}(\mathbf{s};n)}g_nf_{\mathbf{m}_1}\cdots f_{\mathbf{m}_n}\right) \\ &= \sum_{\mathbf{s}\in\nn^N}\mathbf{z}^\mathbf{s}\left(\sum_{\pi\in\mathcal{C}(\mathbf{s})}g_{|\pi|}f_\pi\right)
  \end{split}
\end{equation}
where we let $\mathcal{C}(\mathbf{s};n)$ stand for $\set{\pi=(\mathbf{m}_1,\ldots,\mathbf{m}_n)\,|\, \mathbf{m}_i\in\nn^N, \sum_{i=1}^n \mathbf{m}_i=\mathbf{s}}$ (vector compositions of $\mathbf{s}$ with fixed number $n$ of parts) and $\mathcal{C}(\mathbf{s})$ analogously represents the class of vector compositions of $\mathbf{s}$ with arbitrary number of parts. Moreover, we use the abbreviation $f_{\pi}=f_{\mathbf{m}_1}\cdots f_{\mathbf{m}_n}$ and $|\pi|$ denotes the number of parts in $\pi$. Note that the above representation generalizes the analogous representation derived in Vignat and Wakhare \cite{Vignat:2017} to the multivariate case. 

Since
\begin{align*}
  \frac{1}{\Bell!}\frac{\partial^{||\Bell||} H(\mathbf{0})}{\partial\mathbf{z}^{\Bell}} = [\mathbf{z}^{\Bell}]H(\mathbf{z}), 
\end{align*}
for any power series $H(\mathbf{z})$, 
we immediately have several Fa\`{a} di Bruno like representations of partial derivatives. Here, we write $\partial \mathbf{z}^{\Bell}$ for $\partial z_1^{\ell_1}\cdots\partial z_N^{\ell_N}$, $||\Bell||$ for $\ell_1+\cdots+\ell_N$ and $\Bell!$ for $\ell_1!\cdots\ell_N!$. 
\begin{theorem}\label{theorem:deriv_alt}
Let $G\circ F:\real^N\rightarrow \real$, with $F:\real^N\rightarrow \real$ and
$G:\real\rightarrow\real$. Let $\Bell=(\ell_1,\ldots,\ell_N)\in\nn^N$ and 
assume that $G$ and $F$ have a sufficient number of derivatives.  Then 
\begin{align*}
  \frac{\partial^{||\Bell||}(G\circ F)(\mathbf{x})}{\partial \mathbf{z}^{\Bell}}
  &=
  \sum_{\pi=(\mathbf{m}_1,\mathbf{m}_2,\ldots)\in \mathcal{C}(\Bell)}\frac{\Bell!}{|\pi|!\mathbf{m}_1!\mathbf{m}_2!\cdots}G^{(|\pi|)}(F(\mathbf{x}))\prod_i\frac{\partial^{||\mathbf{m}_i||} F(\mathbf{x})}{\partial \mathbf{z}^{\mathbf{m}_i}}
  \end{align*}
  \qed
\end{theorem}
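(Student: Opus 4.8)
The plan is to read the formula off the generating-function identity \eqref{eq:compositionrepr} that has just been established, so that the argument is essentially bookkeeping. First I would treat the base point $\mathbf{x}=\mathbf{0}$. Applying the coefficient-extraction identity $\frac{1}{\Bell!}\partial^{||\Bell||}H(\mathbf{0})/\partial\mathbf{z}^{\Bell}=[\mathbf{z}^{\Bell}]H(\mathbf{z})$ with $H=G\circ F$ and then invoking \eqref{eq:compositionrepr} gives
\[
  \frac{1}{\Bell!}\frac{\partial^{||\Bell||}(G\circ F)(\mathbf{0})}{\partial\mathbf{z}^{\Bell}}
  = \sum_{\pi=(\mathbf{m}_1,\mathbf{m}_2,\ldots)\in\mathcal{C}(\Bell)} g_{|\pi|}\,f_{\pi}.
\]
Into this I would substitute the Taylor coefficients $g_n=G^{(n)}(0)/n!$ and $f_{\mathbf{m}_i}=\frac{1}{\mathbf{m}_i!}\,\partial^{||\mathbf{m}_i||}F(\mathbf{0})/\partial\mathbf{z}^{\mathbf{m}_i}$, multiply through by $\Bell!$, and group the factorials into the single factor $\Bell!/(|\pi|!\,\mathbf{m}_1!\mathbf{m}_2!\cdots)$; this is precisely the claimed identity at $\mathbf{x}=\mathbf{0}$, except that $G^{(|\pi|)}$ is evaluated at $0$ rather than at $F(\mathbf{x})$.

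To obtain the formula at an arbitrary $\mathbf{x}$, I would apply the $\mathbf{x}=\mathbf{0}$ case to the re-centered pair $\hat F(\mathbf{z})\defin F(\mathbf{x}+\mathbf{z})-F(\mathbf{x})$ and $\hat G(w)\defin G(F(\mathbf{x})+w)$. One checks $(\hat G\circ\hat F)(\mathbf{z})=(G\circ F)(\mathbf{x}+\mathbf{z})$, so the two sides agree after differentiating $||\Bell||$ times and evaluating at $\mathbf{0}$; meanwhile $\hat G^{(n)}(0)=G^{(n)}(F(\mathbf{x}))$ and, for $\mathbf{m}\neq\mathbf{0}$, the $\mathbf{z}^{\mathbf{m}}$-coefficient of $\hat F$ equals $\frac{1}{\mathbf{m}!}\,\partial^{||\mathbf{m}||}F(\mathbf{x})/\partial\mathbf{z}^{\mathbf{m}}$. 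Substituting these into the $\mathbf{x}=\mathbf{0}$ identity yields the theorem as stated.

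The point that requires care — and the reason the re-centering is more than cosmetic — is the constant term. Since $\hat F(\mathbf{0})=0$, the coefficient $\hat f_{\mathbf{0}}$ vanishes, so any composition $\pi$ that uses a zero part contributes $0$; hence the sum collapses to one over the finitely many compositions of $\Bell$ into nonzero parts (at most $||\Bell||$ of them), which makes the right-hand side a well-defined finite sum and explains why $\mathcal{C}(\Bell)$ may here be read as compositions into nonzero parts. I expect the only genuine obstacle to be justifying the interchange of the sum over the number of parts $n$ with the composition sum in \eqref{eq:compositionrepr}: formally it is automatic, and under the standing assumption that $G$ and $F$ are power series with enough derivatives it is legitimate because, once $\hat f_{\mathbf{0}}=0$ is used, only finitely many terms are nonzero. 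Everything else is routine factorial algebra, which is exactly the sense in which this partial-derivative Fa\`{a} di Bruno formula ``follows effortlessly'' from the weighted vector composition identities.
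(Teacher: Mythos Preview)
Your proposal is correct and follows exactly the approach the paper intends: extract the $\mathbf{z}^{\Bell}$-coefficient from the composition identity \eqref{eq:compositionrepr}, translate coefficients into derivatives via the Taylor formula, and collect factorials. You are in fact more careful than the paper, which simply writes \qed after the coefficient-extraction remark and leaves both the re-centering to a general base point $\mathbf{x}$ and the vanishing of zero-part contributions (your $\hat f_{\mathbf 0}=0$ observation) implicit; the paper only notes the latter informally in the sentence following the theorem.
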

Note that in the theorem, terms $\frac{\partial^{||\mathbf{m}_i||} F(\mathbf{x})}{\partial \mathbf{z}^{\mathbf{m}_i}}$ with $\mathbf{m}_i=\mathbf{0}$ drop, so we can perceive of the sum as being over non-zero parts $\mathbf{m}_i$. 

Alternative representations of the partial derivative can be derived by considering different identities for $\binom{n}{\mathbf{s}}_f$. For example, using 
\eqref{eq:repr}, we obtain Theorem \ref{theorem:deriv} below. Still other representations follow analogously from considering further identities of $\binom{n}{\mathbf{s}}_f$, e.g., \eqref{eq:rec}, plugged into the representation of $(G\circ F)(\mathbf{z})$ in \eqref{eq:compositionrepr} above.  
\begin{theorem}\label{theorem:deriv}
Let $G\circ F:\real^N\rightarrow \real$, with $F:\real^N\rightarrow \real$ and
$G:\real\rightarrow\real$. Let $\Bell=(\ell_1,\ldots,\ell_N)\in\nn^N$ and 
assume that $G$ and $F$ have a sufficient number of derivatives.  Then 
\begin{align*}
   \frac{\partial^{||\Bell||}(G\circ F)(\mathbf{x})}{\partial \mathbf{z}^{\Bell}}
  &=
  \sum_{(r_1,r_2,\ldots)\in \mathcal{P}^{(\mathbf{S}(\Bell))}({\Bell})}
  \frac{\Bell!}{r_1!r_2!\cdots}G^{(r)}(F(\mathbf{x}))\prod_{i}  
  \left(\frac{1}{\mathbf{s}_i!}\frac{\partial^{||\mathbf{s}_i||}
    F(\mathbf{x})}{\partial\mathbf{z}^{\mathbf{s}_i}}\right)^{r_i}
\end{align*}
where $r=r_1+r_2+\cdots$.\qed
\end{theorem}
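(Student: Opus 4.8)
The plan is to repeat the coefficient-extraction argument already used for Theorem~\ref{theorem:deriv_alt}, but feeding in the vector-\emph{partition} expansion \eqref{eq:repr} of $\binom{n}{\mathbf{s}}_f$ rather than the vector-\emph{composition} expansion \eqref{eq:repr1}, and taking some care with the zero part. First I would reduce the claim at an arbitrary point $\mathbf{x}$ to the origin: set $H(\mathbf{z})=(G\circ F)(\mathbf{x}+\mathbf{z})=G\bigl(\tilde F(\mathbf{z})\bigr)$ with $\tilde F(\mathbf{z})=F(\mathbf{x}+\mathbf{z})$. Then the dictionary $\frac{1}{\Bell!}\frac{\partial^{||\Bell||}H(\mathbf{0})}{\partial\mathbf{z}^{\Bell}}=[\mathbf{z}^{\Bell}]H(\mathbf{z})$ identifies the desired derivative $\frac{\partial^{||\Bell||}(G\circ F)(\mathbf{x})}{\partial\mathbf{z}^{\Bell}}$ with $\Bell!\,[\mathbf{z}^{\Bell}]H(\mathbf{z})$, while the Taylor coefficients of $\tilde F$ about $\mathbf{0}$ are $\tilde f_{\mathbf{s}}=\frac{1}{\mathbf{s}!}\frac{\partial^{||\mathbf{s}||}F(\mathbf{x})}{\partial\mathbf{z}^{\mathbf{s}}}$; in particular $\tilde f_{\mathbf{0}}=F(\mathbf{x})$.

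Next I would apply Theorem~\ref{theorem:main} and \eqref{eq:repr} to $G\circ\tilde F$, giving $[\mathbf{z}^{\Bell}]H(\mathbf{z})=\sum_{n\ge 0}g_n\binom{n}{\Bell}_{\tilde f}=\sum_{n\ge 0}g_n\sum_{(r_1,r_2,\ldots)\in\mathcal{P}^{(\mathbf{S}_{\mathbf{0}}(\Bell))}(\Bell;n)}\binom{n}{r_1,r_2,\ldots}\prod_i\tilde f_{\mathbf{s}_i}^{r_i}$, the inner sum running over partitions that may use part size $\mathbf{0}$ with multiplicity $r_0$. I would then split off $r_0$ from the non-zero multiplicities, write $r=\sum_{\mathbf{s}_i\neq\mathbf{0}}r_i$ and $n=r+r_0$, and use $\binom{n}{r_0,r_1,r_2,\ldots}=\binom{r+r_0}{r_0}\binom{r}{\ldots}$ together with $g_m=G^{(m)}(0)/m!$ to recognize $\sum_{r_0\ge0}\binom{r+r_0}{r_0}g_{r+r_0}F(\mathbf{x})^{r_0}=\tfrac{1}{r!}G^{(r)}(F(\mathbf{x}))$ (Taylor's theorem for $G^{(r)}$ about $0$ evaluated at $F(\mathbf{x})$). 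This collapses all zero parts and turns the ``$\binom{n}{r_1,r_2,\ldots}$ times a power of $F(\mathbf{x})$'' bookkeeping into a clean $G^{(r)}(F(\mathbf{x}))$, so the sum ends up indexed by $\mathcal{P}^{(\mathbf{S}(\Bell))}(\Bell)$ with $r=r_1+r_2+\cdots$ the number of non-zero parts; multiplying by $\Bell!$, substituting $\tilde f_{\mathbf{s}_i}=\frac{1}{\mathbf{s}_i!}\frac{\partial^{||\mathbf{s}_i||}F(\mathbf{x})}{\partial\mathbf{z}^{\mathbf{s}_i}}$, and grouping $\Bell!/(r_1!r_2!\cdots)$ yields exactly the claimed formula.

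The main obstacle is carrying out this zero-part absorption rigorously: one must justify interchanging the summations over $n$, over $r_0$, and over the non-zero multiplicity vectors (legitimate for formal power series, and legitimate analytically under the standing assumption that $G$ and $F$ have enough derivatives and the relevant series converge near the point of evaluation), and verify the two elementary identities $\binom{n}{r_0,r_1,r_2,\ldots}=\binom{r+r_0}{r_0}\binom{r}{r_1,r_2,\ldots}$ and $\sum_{r_0\ge0}\binom{r+r_0}{r_0}g_{r+r_0}F(\mathbf{x})^{r_0}=\tfrac{1}{r!}G^{(r)}(F(\mathbf{x}))$. Everything else is the same argument underlying Theorem~\ref{theorem:deriv_alt} with \eqref{eq:repr} in place of \eqref{eq:repr1}; once the zero part is dealt with, matching terms against the stated right-hand side is immediate.
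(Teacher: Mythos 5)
Your proposal is correct and follows essentially the same route as the paper, which simply asserts that the theorem follows by substituting the partition identity \eqref{eq:repr} for the composition identity \eqref{eq:repr1} in the coefficient-extraction argument behind Theorem~\ref{theorem:deriv_alt}. Your write-up supplies the details the paper leaves implicit (recentering at $\mathbf{x}$ and absorbing the zero part via $\sum_{r_0\ge 0}\binom{r+r_0}{r_0}g_{r+r_0}F(\mathbf{x})^{r_0}=\tfrac{1}{r!}G^{(r)}(F(\mathbf{x}))$), and both identities you flag check out.
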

\begin{example}
  Let $\Bell=(1,2)$. Then
  $S(\Bell)=\set{(0,1),(1,0),(1,1),(1,2),(0,2)}$. Moreover,
  \begin{align*}
  \mathcal{P}^{(S(\Bell))}(\Bell)=\set{(0,0,0,1,0),(1,0,1,0,0),(0,1,0,0,1),(2,1,0,0,0)}. 
  \end{align*}
  Therefore, according to Theorem \ref{theorem:deriv}
  \begin{align*}
    \frac{\partial^3(G\circ F)}{\partial x\partial y^2} &= 2\left(
    G'(F(\mathbf{x}))\frac{1}{2}\frac{\partial^3 F(\mathbf{x})}{\partial x\partial y^2}+G''(F(\mathbf{x}))\frac{\partial F(\mathbf{x})}{\partial y}\frac{\partial^2 F(\mathbf{x})}{\partial x\partial y}\right.\\
    &+\left. G''(F(\mathbf{x}))\frac{\partial F(\mathbf{x})}{\partial x}\frac{1}{2}\frac{\partial^2 F(\mathbf{x})}{\partial y^2}+
    \frac{1}{2}G'''(F(\mathbf{x}))\Bigl(\frac{\partial F(\mathbf{x})}{\partial y}\Bigr)^2\frac{\partial F(\mathbf{x})}{\partial x}
    \right). 
  \end{align*}
\end{example}

We now show that Theorem \ref{theorem:deriv_alt} (or equivalently Theorem \ref{theorem:deriv}) generalizes the main formula derived in \cite{Hardy:2006}. Recall that a \emph{set partition} of $[n]=\set{1,\ldots,n}$ is a set of disjoint, non-empty subsets of $[n]$ whose union is $[n]$.

\begin{lemma}\label{lemma:setp}
  There is a bijection between the set of all vector partitions (unordered compositions) of the vector $\underbrace{(1,\ldots,1)}_{{n \text{ times}}}$ into $k$ non-zero parts and the set of all {set partitions} of $[n]$ into $k$ parts. 
\end{lemma}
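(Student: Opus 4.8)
The plan is to exhibit the bijection explicitly and then verify it is well-defined and invertible. A vector partition of $(1,\ldots,1)$ ($n$ times) into $k$ non-zero parts is an unordered collection of $k$ vectors $\mathbf{m}_1,\ldots,\mathbf{m}_k\in\nn^n$, each non-zero, summing to $\mathbf{1}_n$. Since every coordinate of the sum equals $1$ and all entries are non-negative integers, in each coordinate $j\in[n]$ exactly one of the $\mathbf{m}_i$ has a $1$ in position $j$ and all others have $0$; hence each $\mathbf{m}_i$ is the indicator vector $\mathbbm{1}_{B_i}$ of some non-empty set $B_i\subseteq[n]$, and the sets $B_1,\ldots,B_k$ are pairwise disjoint with union $[n]$. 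So I would define the map $\Phi$ sending the (unordered) vector partition $\set{\mathbbm{1}_{B_1},\ldots,\mathbbm{1}_{B_k}}$ to the set partition $\set{B_1,\ldots,B_k}$ of $[n]$.

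First I would check $\Phi$ is well-defined: the $B_i$ are non-empty (the parts are non-zero), pairwise disjoint (coordinatewise, two parts sharing a coordinate would make that coordinate of the sum at least $2$), and cover $[n]$ (every coordinate is hit by some part since the sum is $1$ there); thus $\set{B_1,\ldots,B_k}$ is a genuine set partition into $k$ blocks, and passing from a multiset of vectors to a set of blocks is unambiguous because the $\mathbbm{1}_{B_i}$ are distinct (distinct disjoint non-empty sets give distinct indicator vectors, and no two blocks coincide). Next I would define the inverse $\Psi$: given a set partition $\set{B_1,\ldots,B_k}$ of $[n]$, send it to $\set{\mathbbm{1}_{B_1},\ldots,\mathbbm{1}_{B_k}}$; these indicator vectors are non-zero and sum coordinatewise to $\mathbf{1}_n$ precisely because the $B_i$ partition $[n]$, so this is a valid vector partition into $k$ non-zero parts. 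Then $\Phi\circ\Psi$ and $\Psi\circ\Phi$ are both the identity by construction, since the correspondence $B\leftrightarrow\mathbbm{1}_B$ between non-empty subsets of $[n]$ and non-zero $\set{0,1}$-vectors of length $n$ is itself a bijection.

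The only point requiring a little care — and the closest thing to an obstacle — is bookkeeping about \emph{unordered} versus ordered data on both sides: a vector partition is an unordered multiset of parts and a set partition is an unordered set of blocks, so I must make sure the correspondence respects this, which it does because $B\mapsto\mathbbm{1}_B$ is injective (so the multiset of parts is actually a set, with no repeated parts, matching the fact that blocks of a set partition are distinct). I would state this explicitly to avoid any ambiguity. With that observation, the lemma follows immediately, and I expect the write-up to be short.
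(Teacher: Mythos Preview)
Your proposal is correct and follows essentially the same approach as the paper: both construct the bijection via the correspondence $B \leftrightarrow \mathbbm{1}_B$ between blocks of a set partition and indicator vectors summing to $\mathbf{1}_n$. Your write-up is in fact more careful than the paper's, which merely sketches the map and says ``and vice versa''; your explicit attention to the unordered/multiset bookkeeping (in particular, that the parts are automatically distinct so the multiset is a set) is a welcome addition.
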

The proof of the lemma is straightforward. We can assign each set partition $a=\set{a_1,\ldots,a_k}$ (where $a_i\subseteq[n]$, $a_i\neq\emptyset$, $\bigcup_i a_i=[n], a_i\cap a_j=\emptyset$) the vector partition $\mathbf{b}_1+\cdots+\mathbf{b}_k$ where $\mathbf{b}_i$ is a vector whose entries are $1$ for all indices in $a_i$ and zero otherwise (and vice versa). Due to the properties of $a$, $\mathbf{b}_1+\cdots+\mathbf{b}_k$ yields $(1,\ldots,1)$. 

Further, since the parts of each vector partition of $\mathbf{1}=(1,\ldots,1)$ into $k$ non-zero parts are all distinct, we also have that $|\mathcal{C}(\mathbf{1};k)|=k!|\mathcal{P}(\mathbf{1};k)|$.

To derive the main result in \cite{Hardy:2006}, we now let $\Bell$ in Theorem \ref{theorem:deriv_alt} be $\mathbf{1}=(1,\ldots,1)$ (each of $N$ variables occurs exactly once). Then $||\Bell||=N$ and $\Bell!=1$ and $\mathbf{m}_i!=1$. Thus,\footnote{In the equation, we perceive of $\mathcal{P}(\Bell)$ as directly containing unordered vectors, rather than multiplicities as in our original definition.} 
\begin{align*}
\frac{\partial^{N}(G\circ F)(\mathbf{x})}{\partial \mathbf{z}^{\mathbf{1}}}
&= 
\sum_{\pi=(\mathbf{m}_1,\mathbf{m}_2,\ldots)\in \mathcal{C}(\mathbf{1})}\frac{1}{|\pi|!}G^{(|\pi|)}(F(\mathbf{x}))\frac{\partial^{||\mathbf{m}_1||} F(\mathbf{x})}{\partial \mathbf{z}^{\mathbf{m}_1}}\frac{\partial^{||\mathbf{m}_2||} F(\mathbf{x})}{\partial \mathbf{z}^{\mathbf{m}_2}}\cdots\\
&=\sum_{\pi=(\mathbf{m}_1,\mathbf{m}_2,\ldots)\in \mathcal{P}(\mathbf{1})}G^{(|\pi|)}(F(\mathbf{x}))\frac{\partial^{||\mathbf{m}_1||} F(\mathbf{x})}{\partial \mathbf{z}^{\mathbf{m}_1}}\frac{\partial^{||\mathbf{m}_2||} F(\mathbf{x})}{\partial \mathbf{z}^{\mathbf{m}_2}}\cdots
\end{align*}
Interpreting the last quantity as a sum over set partitions, using Lemma \ref{lemma:setp}, 
with 
the $\mathbf{m}_i$ as subsets of $[N]$ yields the formula (5) in \cite{Hardy:2006}. 

Correspondingly, our representation in Theorem \ref{theorem:deriv} is the direct analogue of the representation in \cite{Hardy:2006} based on `multiset partitions' (Corollary to Propositions 1 and 2 in \cite{Hardy:2006} combined with Proposition 4 therein). 

There has been some debate on the combinatorial nature of higher-order derivatives. While they
may (thus) be perceived of as set partitions \cite{Hardy:2006,Johnson:2002}, Yang
\cite{Yang:2000} finds that they are ``essentially integer
partitions''. 
Noting the relationships and equivalences between these concepts and based on our derivations, 
we may also claim that partial
derivatives of composite functions are essentially vector compositions! 

\section{Congruences for $\binom{k}{\Bell}_f$ }\label{sec:congruence1}
\begin{theorem}[Parity of $\binom{k}{\Bell}_f$]\label{theorem:parity}
  Let $k\ge 0$ and let $\Bell\in\nn^N$. Then
  \begin{align*}
    \binom{k}{\Bell}_{f} \equiv
    \begin{cases}
      0 \pmod{2}, & \text{if $k$ is even and}\\
                  & \text{$\Bell$ has at least one}\\
                  & \text{odd entry};\\
      \binom{k/2}{\Bell/2}_{f} \pmod{2}, & \text{if $k$ is even and}\\
        & \text{$\Bell$ has only even entries};\\
      \sum_{\set{\mathbf{s}\sd \Bell-\mathbf{s} \text{ has only even entries}}}f(\mathbf{s})\binom{\lfloor k/2\rfloor}{\frac{\Bell-\mathbf{s}}{2}}& \text{if $k$ is odd}.
    \end{cases}
  \end{align*}
\end{theorem}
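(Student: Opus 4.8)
The plan is to work in the ring of formal power series $\mathbb{Z}[[x_1,\ldots,x_N]]$, reduce coefficients modulo $2$ at the end, and exploit the Frobenius-type identity (``freshman's dream'') together with Theorem~\ref{theorem:main}. Write $A(\mathbf{x})=\sum_{\mathbf{s}\in\nn^N}f(\mathbf{s})\mathbf{x}^{\mathbf{s}}$, so that $F(\mathbf{x};k)=A(\mathbf{x})^k$ and $\binom{k}{\Bell}_f=[\mathbf{x}^{\Bell}]A(\mathbf{x})^k$. First I would establish the congruence $A(\mathbf{x})^2\equiv A(\mathbf{x}^2)\pmod 2$, where $\mathbf{x}^2$ denotes $(x_1^2,\ldots,x_N^2)$: on expanding the square, the cross terms carry a factor $2$, while each squared term $f(\mathbf{s})^2\mathbf{x}^{2\mathbf{s}}$ reduces to $f(\mathbf{s})\mathbf{x}^{2\mathbf{s}}$ mod $2$ because $f(\mathbf{s})^2\equiv f(\mathbf{s})\pmod 2$ for every integer. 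Iterating gives $A(\mathbf{x})^{2m}\equiv A(\mathbf{x}^2)^m\pmod 2$, and applying Theorem~\ref{theorem:main} with the substitution $\mathbf{y}=\mathbf{x}^2$ yields $A(\mathbf{x}^2)^m=\sum_{\mathbf{t}\in\nn^N}\binom{m}{\mathbf{t}}_f\mathbf{x}^{2\mathbf{t}}$, a series supported only on monomials all of whose exponents are even.

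Next I would split on the parity of $k$. If $k=2m$ is even, then $F(\mathbf{x};k)\equiv\sum_{\mathbf{t}}\binom{m}{\mathbf{t}}_f\mathbf{x}^{2\mathbf{t}}\pmod 2$; extracting $[\mathbf{x}^{\Bell}]$ gives $0$ when $\Bell$ has an odd entry (no such monomial occurs) and $\binom{m}{\Bell/2}_f=\binom{k/2}{\Bell/2}_f$ when $\Bell$ is entrywise even, which gives the first two cases. If $k=2m+1$ is odd with $m=\lfloor k/2\rfloor$, then $F(\mathbf{x};k)=A(\mathbf{x})\cdot A(\mathbf{x})^{2m}\equiv A(\mathbf{x})\cdot\sum_{\mathbf{t}}\binom{m}{\mathbf{t}}_f\mathbf{x}^{2\mathbf{t}}\pmod 2$, and extracting $[\mathbf{x}^{\Bell}]$ from this product picks out exactly the pairs $(\mathbf{s},\mathbf{t})$ with $\mathbf{s}+2\mathbf{t}=\Bell$, i.e.\ $\mathbf{t}=(\Bell-\mathbf{s})/2$ with $\Bell-\mathbf{s}$ entrywise even and nonnegative; this produces $\sum_{\{\mathbf{s}\,:\,\Bell-\mathbf{s}\text{ has only even entries}\}}f(\mathbf{s})\binom{\lfloor k/2\rfloor}{(\Bell-\mathbf{s})/2}_f$, the third case (the inner binomial there should carry the subscript $f$). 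The sum is finite because $f(\mathbf{s})$ multiplies a coefficient that vanishes unless $\mathbf{s}\le\Bell$.

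I do not expect a genuine obstacle here; the one point that has to be handled cleanly is the legitimacy of the Frobenius reduction for a possibly infinitely supported power series with arbitrary nonnegative integer coefficients $f(\mathbf{s})$. This is why I phrase everything inside $\mathbb{Z}[[x_1,\ldots,x_N]]$ and only reduce individual coefficients modulo $2$ at the very end: the coefficient of any fixed $\mathbf{x}^{\Bell}$ in $A(\mathbf{x})^k$ is a finite integer sum over tuples $\mathbf{m}_1+\cdots+\mathbf{m}_k=\Bell$, so all the congruences above are between honest integers. A purely combinatorial alternative would be to iterate the Pascal-type recursion of Remark~\ref{rem:triangle} (or identity~\eqref{eq:rec}, stripping one coordinate at a time) and track parities, but the generating-function argument is shorter and makes the ``halving'' structure transparent, so I would present that one and at most remark on the combinatorial route.
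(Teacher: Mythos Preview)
Your argument is correct and takes a genuinely different route from the paper. The paper treats the three cases separately via the combinatorial identities of Theorem~\ref{prop:convolution}: Case~1 ($k$ even, some $\ell_i$ odd) is deduced from the absorption identity~\eqref{eq:absorption} with $i=1$, which forces $\binom{k}{\Bell}_f$ to be even once $k$ is; Case~2 ($k$ even, $\Bell$ even) comes from the Vandermonde convolution~\eqref{eq:vandermonde} with $k_1=k_2=k/2$, pairing off the terms with $\mathbf{a}\neq\mathbf{b}$ and using $x^2\equiv x\pmod 2$ on the surviving diagonal term; Case~3 ($k$ odd) is the one-step Pascal recursion of Remark~\ref{rem:triangle} followed by Cases~1 and~2. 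Your Frobenius/generating-function approach packages all three cases into the single congruence $A(\mathbf{x})^{2}\equiv A(\mathbf{x}^{2})\pmod 2$ and a coefficient extraction, which is shorter and makes the halving structure visible at once; it also generalises immediately to any prime $p$ (yielding, in effect, Theorem~\ref{theorem:prime1} by the same mechanism). The paper's proof, by contrast, showcases how far the elementary identities~\eqref{eq:vandermonde} and~\eqref{eq:absorption} go on their own, without passing through the power-series ring. Your remark that the inner coefficient in the odd case should carry the subscript $f$ is well taken; the paper's display omits it.
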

\begin{proof} 
  We distinguish three cases. 
  \begin{itemize}
    \item Case $1$: 
	Let $k$ be even and let one entry of $\Bell$ be odd. Consider 
        \eqref{eq:absorption} in Theorem \ref{theorem:main}
        with $i=1$. 
        If $k$ is even, the right-hand
      side vector is even in each entry. 
      Thus, if $\Bell$ is odd in one entry, $\binom{k}{\Bell}_{f}$ must
      be even.  
    \item Case $2$: Let $k$ be even and $\Bell$ be even in each entry. Consider the
      Vandermonde convolution in the case of $r=2$ and $k_1=k_2=k/2$. Then,
      \begin{align*}
        \binom{k}{\Bell}_{f} &=
        \sum_{\mathbf{a}+\mathbf{b}=\Bell}\binom{k/2}{\mathbf{a}}_{f}\binom{k/2}{\mathbf{b}}_{f}.  
      \end{align*}
      All pairs $(\mathbf{a},\mathbf{b})$ for which
      $\mathbf{a}\neq\mathbf{b}$ occur exactly twice, so their sum contributes 
      nothing modulo $2$. The only term that does not occur twice is
      $\mathbf{a}=\mathbf{b}$, for which $\mathbf{a}=\Bell/2$. Hence,
      \begin{align*}
        \binom{k}{\Bell}_{f} &\equiv \binom{k/2}{\Bell/2}_f^2 \equiv
        \binom{k/2}{\Bell/2}_f\pmod{2}. 
      \end{align*}
    \item Case $3$: Let $k$ be odd. Then $k-1$ is even. Thus,
      the Vandermonde convolution with $k_1=1$, $r=2$ implies 
      \begin{align*}
        \binom{k}{\Bell}_{f}&=\sum_{\mathbf{s}\in
          \nn^N}f(\mathbf{s})\binom{k-1}{\Bell-\mathbf{s}}_{f}\\
        &\equiv 
         \sum_{\set{\mathbf{s}\sd \Bell-\mathbf{s} \text{ has only even entries}}}f(\mathbf{s})\binom{\lfloor k/2\rfloor}{\frac{\Bell-\mathbf{s}}{2}}
        \pmod{2},
      \end{align*}
      where we use Case $1$ and Case $2$ in the last
      congruence. 
  \end{itemize}
\end{proof}
\begin{example}
  Let $f((0,1,0))=3$ and let 
  $f(\mathbf{s})=1$ for all \\
  $\mathbf{s}\in\set{(1,0,0),(0,0,1),(1,1,0),(1,0,1), (0,1,1),(1,1,1)}$. Let $f(\mathbf{s})=0$ for all other $\mathbf{s}$. Then, by  
  Theorem \ref{theorem:parity},
  \begin{align*}
    \binom{21}{(20,19,18)}_f &\equiv f((0,1,0))\binom{10}{(10,9,9)}_f\equiv 0\pmod{2}. 
  \end{align*}
  In fact, $\binom{21}{(20,19,18)}_f=7,301,700$. In contrast,
  \begin{align*}
    \binom{19}{(3,16,2)}_f &\equiv \binom{9}{(1,8,1)}_f\equiv \binom{4}{(0,4,0)}_f \equiv \binom{2}{(0,2,0)}_f \\ &\equiv \binom{1}{(0,1,0)}_f \equiv 1\pmod{2}.
  \end{align*}
  Indeed, $\binom{19}{(3,16,2)}_f=8,356,358,620,683$.
\end{example}

\begin{theorem}\label{theorem:prime1}
  Let $p$ be prime, $\Bell\in\nn^N$. Then 
  \begin{align*}
    \binom{p}{\Bell}_{f} \equiv 
    \begin{cases}
      f(\mathbf{m}) \pmod{p}, & \text{if $\Bell=\mathbf{m}p$ for some $\mathbf{m}$};\\ 
      0 \pmod{p}, & \text{else.}
    \end{cases}
  \end{align*}
\end{theorem}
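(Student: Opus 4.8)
The plan is to read off $\binom{p}{\Bell}_f$ from the generating function of Theorem~\ref{theorem:main} and then apply the Frobenius (``freshman's dream'') identity modulo $p$. By Theorem~\ref{theorem:main}, $\binom{p}{\Bell}_f = [\mathbf{x}^{\Bell}]\bigl(\sum_{\mathbf{s}\in\nn^N} f(\mathbf{s})\mathbf{x}^{\mathbf{s}}\bigr)^p$. Only the finitely many $\mathbf{s}$ with $\mathbf{s}\le\Bell$ componentwise can contribute to the coefficient of $\mathbf{x}^{\Bell}$, so I may replace the inner sum by the genuine integer polynomial $P(\mathbf{x}) = \sum_{\mathbf{s}\in S_{\mathbf{0}}(\Bell)} f(\mathbf{s})\mathbf{x}^{\mathbf{s}}$, and it suffices to evaluate $[\mathbf{x}^{\Bell}]P(\mathbf{x})^p \bmod p$.

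First I would invoke the multinomial form of the Frobenius endomorphism: for integers $a_1,\dots,a_m$ one has $(a_1+\cdots+a_m)^p \equiv a_1^p+\cdots+a_m^p \pmod p$, because the multinomial coefficient $\binom{p}{r_1,\dots,r_m}=\frac{p!}{r_1!\cdots r_m!}$ is divisible by $p$ whenever no $r_i$ equals $p$ (the single factor of $p$ in $p!$ is not cancelled in the denominator, as each $r_i<p$). Applying this coefficientwise to $P(\mathbf{x})$ and then using Fermat's little theorem $f(\mathbf{s})^p\equiv f(\mathbf{s})\pmod p$ gives
\begin{align*}
  P(\mathbf{x})^p \;\equiv\; \sum_{\mathbf{s}\in S_{\mathbf{0}}(\Bell)} f(\mathbf{s})^p\,\mathbf{x}^{p\mathbf{s}} \;\equiv\; \sum_{\mathbf{s}\in S_{\mathbf{0}}(\Bell)} f(\mathbf{s})\,\mathbf{x}^{p\mathbf{s}} \pmod p.
\end{align*}

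Finally I would extract $[\mathbf{x}^{\Bell}]$. The only monomials on the right are the $\mathbf{x}^{p\mathbf{s}}$, so $\mathbf{x}^{\Bell}$ appears iff $\Bell=p\mathbf{m}$ for some $\mathbf{m}\in\nn^N$, in which case $\mathbf{m}=\Bell/p$ is the unique such vector and the coefficient is $f(\mathbf{m})$; otherwise the coefficient is $0$. This is precisely the claimed dichotomy, and it agrees with Lemma~\ref{lemma:f0} when $\Bell=\mathbf{0}$ (then $\mathbf{m}=\mathbf{0}$ and $\binom{p}{\mathbf{0}}_f=f(\mathbf{0})^p\equiv f(\mathbf{0})$).

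There is essentially no serious obstacle; the only points needing care are (i) the (routine) justification of passing from the formal power series of \eqref{eq:power} to a finite polynomial, so that the characteristic-$p$ manipulations are legitimate, and (ii) the elementary lemma that $p\mid\binom{p}{r_1,\dots,r_m}$ unless some $r_i=p$ — which is also all one needs for a direct argument from the partition expansion \eqref{eq:repr} instead: there $\binom{p}{\Bell}_f=\sum_{(r_1,r_2,\dots)\in\mathcal{P}(\Bell;p)}\binom{p}{r_1,r_2,\dots}\prod_i f(\mathbf{s}_i)^{r_i}$, every term with more than one nonzero multiplicity vanishes modulo $p$, and the only surviving term (if any) has a single $r_i=p$, i.e.\ $\Bell=p\mathbf{s}_i$, contributing $f(\mathbf{s}_i)^p\equiv f(\mathbf{s}_i)\pmod p$.
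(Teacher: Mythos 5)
Your proof is correct, and its main route differs from all three proofs the paper gives. The paper proves this theorem (i) combinatorially, by letting the cyclic shift act on the set of colored compositions with $p$ parts and invoking the fixed-point lemma (Lemma \ref{lemma:comb}) — the fixed points being exactly the $f(\mathbf{m})$ constant compositions $[\mathbf{m},\ldots,\mathbf{m}]$ when $\Bell=p\mathbf{m}$; (ii) by induction on the support of $f$ using the recursion \eqref{eq:rec} together with $p\mid\binom{p}{i}$ for $0<i<p$ and Fermat's little theorem; and (iii) via the partition expansion \eqref{eq:repr} and the divisibility $\binom{k}{k_1,k_2,\ldots}\equiv 0\pmod{k/\gcd(k_1,k_2,\ldots)}$. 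Your primary argument — truncate the generating function of Theorem \ref{theorem:main} to a polynomial, apply the Frobenius identity $P(\mathbf{x})^p\equiv\sum_{\mathbf{s}}f(\mathbf{s})\mathbf{x}^{p\mathbf{s}}\pmod p$, and read off the coefficient of $\mathbf{x}^{\Bell}$ — is a clean fourth route; amusingly, the paper later \emph{derives} exactly this Frobenius congruence \emph{from} Theorem \ref{theorem:prime1} in the proof of Theorem \ref{theorem:13}, whereas you run the implication in the opposite direction, needing only the elementary fact that $p\mid\binom{p}{r_1,\ldots,r_m}$ unless some $r_i=p$ (a special case of the stronger multinomial divisibility the paper cites). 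Your closing remark about arguing directly from \eqref{eq:repr} essentially reproduces the paper's third proof. What the paper's combinatorial proof buys that yours does not is an explanation of the congruence by an explicit group action with no appeal to generating functions or to Fermat; what yours buys is brevity and the reusable polynomial congruence $P(\mathbf{x})^p\equiv P(\mathbf{x}^p)\pmod p$ (with $f$ reduced mod $p$), which immediately feeds the Lucas-type results later in the paper. Your handling of the two technical points — truncating the formal power series to legitimize the characteristic-$p$ manipulation, and the uniqueness of $\mathbf{m}=\Bell/p$ so that no monomial collisions occur — is adequate.
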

We sketch three proofs of Theorem \ref{theorem:prime1}, 
a combinatorial proof and two proof sketches based on identities in
Theorem \ref{prop:convolution}. The first proof uses the
following lemma (see \cite{Anderson:2005}). 
\begin{lemma}\label{lemma:comb}
  Let $S$ be a finite set, let $p$ be prime, and suppose $g:S\goesto
  S$ has the property that $g^p(x)=x$ for any $x$ in $S$, where $g^p$
  is the $p$-fold composition of $g$. Then
  $|{S}|\equiv|{F}|\pmod{p}$, where $F$ is the set of fixed points
  of $g$. \qed
\end{lemma}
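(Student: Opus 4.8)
The plan is to exploit the fact that the hypothesis $g^p(x) = x$ for all $x \in S$ makes $g$ a permutation of $S$ whose $p$-th power is the identity, so that $g$ generates an action of the cyclic group $\z/p\z$ on the finite set $S$. First I would observe that $g$ is a bijection: since $g^p = \text{id}$, the map $g^{p-1}$ is a two-sided inverse of $g$. Thus $g$ is a permutation of $S$ whose order divides $p$, and the subgroup $\langle g\rangle$ it generates acts on $S$.

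Next I would decompose $S$ into the orbits of $\langle g\rangle$. For a fixed $x \in S$, let $d$ be the least positive integer with $g^d(x) = x$; then the orbit of $x$ is exactly $\set{x, g(x), \ldots, g^{d-1}(x)}$, and these $d$ elements are distinct by minimality of $d$, so the orbit has size $d$. Since $g^p(x) = x$, the same minimality forces $d \mid p$, and because $p$ is prime we conclude $d \in \set{1, p}$. Hence every orbit has size either $1$ or $p$.

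The orbits of size $1$ are precisely the singletons $\set{x}$ with $g(x) = x$, i.e.\ the fixed points of $g$, so there are exactly $|F|$ of them. Writing $m$ for the number of orbits of size $p$, the orbit decomposition partitions $S$ into disjoint sets and therefore yields $|S| = |F| + pm$. Reducing modulo $p$ gives $|S| \equiv |F| \pmod{p}$, which is the assertion.

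I do not anticipate a serious obstacle, as this is the standard orbit-counting argument that underlies, for instance, the necklace proof of Fermat's little theorem. The only step requiring slight care is the claim that the minimal period $d$ of each point divides $p$: writing $p = qd + r$ with $0 \le r < d$ via the division algorithm gives $g^r(x) = g^{p - qd}(x) = x$, which forces $r = 0$ by minimality of $d$, so indeed $d \mid p$.
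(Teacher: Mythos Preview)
Your proof is correct and is the standard orbit-counting argument. The paper does not actually prove this lemma; it states it with a \qed\ and cites \cite{Anderson:2005}, so there is no in-paper proof to compare against, but the argument you give is exactly the one that reference would supply.
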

\begin{proof}[Proof of Theorem \ref{theorem:prime1}, 1]
  Let $g$, a map from the set of $f$-weighted vector compositions of $\Bell$ with $p$ parts to itself, 
  be the operation that shifts all parts one to the right, modulo
  $p$. In other words, $g$ maps (denoting colors by superscripts)
  $[\mathbf{m}_1^{\alpha_1},\mathbf{m}_2^{\alpha_2},\ldots,\mathbf{m}_{p-1}^{\alpha_{p-1}},\mathbf{m}_p^{\alpha_p}]$
  to 
  \begin{align*}
    [\mathbf{m}_p^{\alpha_p},\mathbf{m}_1^{\alpha_1},\mathbf{m}_2^{\alpha_2},\ldots,\mathbf{m}_{p-1}^{\alpha_{p-1}}].
  \end{align*}
  Of course, applying $g$ $p$ times yields the original 
  vector composition, that is, $g^{p}(x)=x$ for all $x$. 
	We may thus apply Lemma \ref{lemma:comb}. If $\Bell$ allows a
  representation $\Bell=p\mathbf{m}$ for some suitable $\mathbf{m}$, $g$ has exactly $f(\mathbf{m})$
  fixed points, namely, all compositions $\underbrace{[\mathbf{m}^{1},\ldots,
    \mathbf{m}^{1}]}_{p \text{ times}}$ to
  $\underbrace{[\mathbf{m}^{f(\mathbf{m})},\ldots,\mathbf{m}^{f(\mathbf{m})}]}_{p \text{ times}}$. Otherwise,
  if $\Bell$ has no such 
  representation, $g$ has no fixed points. 
  This proves the theorem. 
\end{proof}
\begin{proof}[Proof of Theorem \ref{theorem:prime1}, 2]
  We apply \eqref{eq:rec} in Theorem 
  \ref{prop:convolution}. Since for the ordinary binomial
  coefficients, the relation $\binom{p}{n}\equiv
  0\pmod{p}$ holds for all $1\le n\le p-1$ and
  $\binom{p}{0}=\binom{p}{p}=1$, we have 
  \begin{align*}
    \binom{p}{\Bell}_{f}&\equiv
    \binom{p}{\Bell}_{f_{|f(\mathbf{m})=0}}+f(\mathbf{m})^p\binom{0}{\Bell-\mathbf{m}
      p}_{f_{|f(\mathbf{m})=0}}\\
      &\equiv \binom{p}{\Bell}_{f_{|f(\mathbf{m})=0}}+f(\mathbf{m})\binom{0}{\Bell-\mathbf{m}
      p}_{f_{|f(\mathbf{m})=0}} \pmod{p}, 
  \end{align*}
  for any $\mathbf{m}$ and where the last congruence is due to Fermat's
  little theorem. Therefore, if $\Bell=\mathbf{m}p$ for some $\mathbf{m}$, then
  $\binom{p}{\Bell}_f\equiv  \binom{p}{\Bell}_{f_{|f(\mathbf{m})=0}}+f(\mathbf{m})\pmod{p}$ and
  otherwise $\binom{p}{\Bell}_f \equiv
  \binom{p}{\Bell}_{f_{|f(\mathbf{m})=0}}\pmod{p}$ for any $\mathbf{m}$. 
  Now, the theorem follows inductively. 
\end{proof}
\begin{proof}[Proof of Theorem \ref{theorem:prime1}, 3] 
  We use 
  \eqref{eq:repr} in Theorem \ref{prop:convolution} in conjunction
  with the following property of multinomial
  coefficients (see, e.g., \cite{Ricci:1931}): 
  \begin{align}\label{eq:multi}
    \binom{k}{k_1,k_2,\ldots}\equiv
    0\pmod{\frac{k}{\gcd{(k_1,k_2,\ldots)}}}. 
  \end{align}
  Since the multiplicities $r_1,r_2,\ldots$ for $\binom{p}{\Bell}_f$ in \eqref{eq:repr} satisfy
  $r_1+r_2+\cdots = p$, we have $d=\gcd{(r_1,r_2,\ldots)}\in\set{1,p}$, since otherwise $p$ was composite. Moreover, $d=p$ 
  if and only if exactly one of the $r_i$ equals $p$ and all the other are zero. 
  Hence, whenever $\Bell\neq p\mathbf{m}$, for any $\mathbf{m}$, then $d=1$ for all $(r_1,r_2,\ldots)$ in the summation, for otherwise, the condition $r_1\mathbf{s}_1+r_2\mathbf{s}_2+\cdots=\Bell$ would imply that $p\mathbf{s}_i=\Bell$, a contradiction. Therefore, $\binom{p}{\Bell}_f\equiv 0\pmod{p}$
  since all terms in the summation in \eqref{eq:repr} are congruent to zero modulo $p$ by \eqref{eq:multi}.
  Consider now the case $\Bell=p\mathbf{m}$ for some $\mathbf{m}$.
  Then, $\mathbf{m}\in S(\Bell)$, that is, $\mathbf{m}=\mathbf{s}_i$ for some $i$. Again, the only terms in the summation that contribute modulo $p$ are those for which $d=p$. Thus, there is exactly one term that contributes, namely, $(r_1,r_2,\ldots,r_i,\ldots)=(0,0,\ldots,p,\ldots)$. 
  Therefore, $\binom{p}{\Bell}_f\equiv \binom{p}{0,\ldots,0,p,0,\ldots}f(\mathbf{\mathbf{m}})^p \equiv f(\mathbf{m})\pmod{p}$.
\end{proof}
We call the next congruence Babbage's congruence, since Charles
Babbage was apparently the first to assert the respective congruence
in the case of ordinary binomial coefficients \cite{Babbage:1819}. 
\begin{theorem}[Babbage's congruence]\label{theorem:babbage}
 Let $p$ be prime, let $n$ be a nonnegative integer, and let $\mathbf{m}\in\nn^N$. Then
\begin{align*}
  \binom{np}{\mathbf{m}p}_{f} \equiv \binom{n}{\mathbf{m}}_{g}\pmod{p^2},
\end{align*}  
whereby $g$ is defined as $g(\mathbf{x})=\binom{p}{\mathbf{x}p}_{f}$, for all $\mathbf{x}$. 
\end{theorem}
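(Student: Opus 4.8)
The plan is to mimic the classical proof of Babbage's congruence for ordinary binomial coefficients, working through the Vandermonde convolution \eqref{eq:vandermonde} applied in a two-layer fashion. First I would use \eqref{eq:vandermonde} with $r=n$ and all $k_i=p$ to write
\begin{align*}
  \binom{np}{\mathbf{m}p}_f = \sum_{\mathbf{q}_1+\cdots+\mathbf{q}_n=\mathbf{m}p}\binom{p}{\mathbf{q}_1}_f\cdots\binom{p}{\mathbf{q}_n}_f.
\end{align*}
By Theorem~\ref{theorem:prime1}, each factor $\binom{p}{\mathbf{q}_i}_f$ is divisible by $p$ unless $\mathbf{q}_i$ itself is a multiple of $p$, say $\mathbf{q}_i = \mathbf{x}_i p$, in which case it equals $g(\mathbf{x}_i)$ modulo $p$ (with $g$ as in the statement). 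So any term of the sum in which two or more of the $\mathbf{q}_i$ are \emph{not} multiples of $p$ is divisible by $p^2$ and drops out modulo $p^2$.

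The key combinatorial observation is that a term cannot have \emph{exactly one} $\mathbf{q}_i$ that fails to be a multiple of $p$: since $\mathbf{q}_1+\cdots+\mathbf{q}_n=\mathbf{m}p$ is a multiple of $p$, if all but one of the $\mathbf{q}_i$ are multiples of $p$ then the remaining one is forced to be a multiple of $p$ as well. Hence modulo $p^2$ the only surviving terms are those in which \emph{every} $\mathbf{q}_i$ is a multiple of $p$, $\mathbf{q}_i=\mathbf{x}_i p$ with $\mathbf{x}_1+\cdots+\mathbf{x}_n=\mathbf{m}$. For such terms I need $\binom{p}{\mathbf{x}_i p}_f\cdots$ only modulo $p^2$; I would replace each factor by its exact value $g(\mathbf{x}_i)=\binom{p}{\mathbf{x}_i p}_f$ and observe that the product $\prod_i g(\mathbf{x}_i)$, summed over all $(\mathbf{x}_1,\ldots,\mathbf{x}_n)$ with $\sum \mathbf{x}_i=\mathbf{m}$, is exactly $\binom{n}{\mathbf{m}}_g$ by \eqref{eq:vandermonde} applied again (now with all $k_i=1$, using Lemma~\ref{lemma:f0} that $\binom{1}{\mathbf{x}}_g=g(\mathbf{x})$). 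This yields $\binom{np}{\mathbf{m}p}_f \equiv \binom{n}{\mathbf{m}}_g \pmod{p^2}$.

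The main obstacle — and the point requiring the most care — is the bookkeeping modulo $p^2$: I must be sure that when I discard the terms with $\ge 2$ non-multiple-of-$p$ blocks I am genuinely working modulo $p^2$ (each such term is a product of at least two factors each divisible by $p$, so this is fine), and that in the surviving terms the substitution $\binom{p}{\mathbf{x}_i p}_f \leftrightarrow g(\mathbf{x}_i)$ is an exact equality rather than a congruence, so no further error is introduced. One subtlety to note explicitly is the "all-parts-zero" and boundary cases (some $\mathbf{x}_i=\mathbf{0}$), where $g(\mathbf{0})=\binom{p}{\mathbf{0}}_f=f(\mathbf{0})^p$ by Lemma~\ref{lemma:f0}; these are handled uniformly by the same Vandermonde identity and need no separate argument. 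A cleaner alternative, which I would mention, is an induction on $n$ using the single-step recursion $\binom{np}{\mathbf{m}p}_f=\sum_{\mathbf{q}}\binom{p}{\mathbf{q}}_f\binom{(n-1)p}{\mathbf{m}p-\mathbf{q}}_f$ together with Theorem~\ref{theorem:prime1} and the induction hypothesis modulo $p$ on the second factor; the base case $n=1$ is trivial since $\binom{p}{\mathbf{m}p}_f=g(\mathbf{m})=\binom{1}{\mathbf{m}}_g$.
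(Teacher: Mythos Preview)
Your proposal is correct and follows essentially the same route as the paper: apply the Vandermonde convolution \eqref{eq:vandermonde} with $r=n$ and each $k_i=p$, use Theorem~\ref{theorem:prime1} to kill modulo $p^2$ all terms with at least two $\mathbf{q}_i$ not divisible by $p$, observe that the sum constraint $\sum\mathbf{q}_i=\mathbf{m}p$ forces the remaining terms to have every $\mathbf{q}_i=\mathbf{x}_ip$, and recognize the surviving sum as $\binom{n}{\mathbf{m}}_g$. Your additional remarks on the boundary cases and the exactness of $g(\mathbf{x}_i)=\binom{p}{\mathbf{x}_ip}_f$ are fine; the inductive alternative you sketch would need, for the non-multiple-of-$p$ terms, the extra input that $\binom{(n-1)p}{\Bell}_f\equiv 0\pmod{p}$ when $p\nmid\Bell$, which is not quite the induction hypothesis as stated.
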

\begin{proof}
By the Vandermonde convolution, we have
\begin{align}\label{eq:p2}
  \binom{np}{\mathbf{m}p}_f = \sum_{\mathbf{k}_1+\cdots+\mathbf{k}_n=\mathbf{m}p}\binom{p}{\mathbf{k}_1}_f\cdots\binom{p}{\mathbf{k}_n}_f
\end{align}
Now, by Theorem \ref{theorem:prime1}, $p$ divides
$\binom{p}{\mathbf{x}}_{f}$ 
whenever $\mathbf{x}$ is not of the form $\mathbf{x}=\mathbf{r}p$. Hence, modulo
$p^2$, the only terms that contribute to the sum are those for which
at least $n-1$ $\mathbf{k}_i$'s are of the form
$\mathbf{k}_i=\mathbf{r}_ip$. Since the  
$\mathbf{k}_i$'s must sum to $\mathbf{m}p$, this implies that all $\mathbf{k}_i$'s are of the
form $\mathbf{k}_i=\mathbf{r}_ip$, for $i=1,\ldots,n$. Hence, 
modulo $p^2$, \eqref{eq:p2} becomes 
\begin{align*}
   \sum_{{\mathbf{r}_1+\cdots+\mathbf{r}_n=\mathbf{m}}}\prod_{i=1}^n \binom{p}{\mathbf{r}_ip}_{f} =\sum_{{\mathbf{r}_1+\cdots+\mathbf{r}_n=\mathbf{m}}}\prod_{i=1}^n g(\mathbf{r}_i),
\end{align*}
The last sum is precisely $\binom{n}{\mathbf{m}}_{g}$.
\end{proof}
\begin{example}\label{example:170}
  Let $f$ be the indicator function on the set \\
  $\set{(1,0),(0,1),(1,1),(2,1),(1,2)}$. Let $p=3$, $n=2$, and $\mathbf{m}=(1,2)$. Enumeration shows that 
  \begin{align*}
    \binom{6}{(3,6)}_f = 170.
  \end{align*}
  Moreover, $\binom{2}{(1,2)}_g$ can be determined by looking at the compositions of $(1,2)$ in two parts, which are $(1,2)=(0,1)+(1,1)=(1,1)+(0,1)$. We have $g((1,1))=\binom{3}{(3,3)}_f=13$ and $g((0,1))=\binom{3}{(0,3)}_f=1$. Hence, $\binom{2}{(1,2)}_g = 26 \equiv 8\equiv \binom{6}{(3,6)}_f\pmod{3^2}$, as predicted. 
\end{example}
Since $g(\mathbf{m})\equiv f(\mathbf{m})\pmod{p}$, by Theorem \ref{theorem:prime1}, we have the following theorem.
\begin{theorem}\label{theorem:special}
Let $p$ be prime, let $n$ be a nonnegative integer, and let $\mathbf{m}\in\nn^N$. Then
\begin{align*}
  \binom{np}{\mathbf{m}p}_{f} \equiv \binom{n}{\mathbf{m}}_{f}\pmod{p}.
\end{align*}  
\qed
\end{theorem}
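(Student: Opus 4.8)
The plan is to obtain Theorem~\ref{theorem:special} as a one-step corollary of Babbage's congruence (Theorem~\ref{theorem:babbage}) together with the $\Bell=\mathbf{m}p$ case of Theorem~\ref{theorem:prime1}. First I would reduce Theorem~\ref{theorem:babbage} modulo $p$: from $\binom{np}{\mathbf{m}p}_f\equiv\binom{n}{\mathbf{m}}_g\pmod{p^2}$, where $g(\mathbf{x})=\binom{p}{\mathbf{x}p}_f$, it follows a fortiori that $\binom{np}{\mathbf{m}p}_f\equiv\binom{n}{\mathbf{m}}_g\pmod{p}$. So it remains only to check that $\binom{n}{\mathbf{m}}_g\equiv\binom{n}{\mathbf{m}}_f\pmod{p}$.

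For that, note that $g$ and $f$ agree pointwise modulo $p$: for every $\mathbf{x}\in\nn^N$ the vector $\mathbf{x}p$ is a $p$-multiple of $\mathbf{x}$, so the first case of Theorem~\ref{theorem:prime1} gives $g(\mathbf{x})=\binom{p}{\mathbf{x}p}_f\equiv f(\mathbf{x})\pmod{p}$. Now I would invoke the representation~\eqref{eq:repr1}, which expresses $\binom{n}{\mathbf{m}}_h$ for an arbitrary weight function $h$ as the finite sum $\sum_{\mathbf{m}_1+\cdots+\mathbf{m}_n=\mathbf{m}}h(\mathbf{m}_1)\cdots h(\mathbf{m}_n)$, each $\mathbf{m}_i$ lying entrywise below $\mathbf{m}$ and hence in $S_{\mathbf{0}}(\mathbf{m})$. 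Since $g(\mathbf{m}_i)\equiv f(\mathbf{m}_i)\pmod{p}$ for every part vector that occurs, each monomial for $g$ is congruent modulo $p$ to the corresponding monomial for $f$, so the two sums are congruent modulo $p$. Chaining the congruences yields $\binom{np}{\mathbf{m}p}_f\equiv\binom{n}{\mathbf{m}}_f\pmod{p}$.

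There is no real obstacle in this argument — all the weight has already been carried by Theorems~\ref{theorem:babbage} and~\ref{theorem:prime1} — and the only point worth stating explicitly is the elementary ``polynomiality'' principle that $\binom{n}{\mathbf{m}}_h$ is a fixed polynomial with integer coefficients in the values of $h$, so congruent weight functions produce congruent counts. If a self-contained proof not routed through Babbage were preferred, the group-action technique behind the first proof of Theorem~\ref{theorem:prime1} adapts directly: let the cyclic group $\z/p\z$ act on the set of $f$-weighted vector compositions of $\mathbf{m}p$ with $np$ parts by cyclically rotating the list of parts, colors included, through $n$ positions at a time. The generator has order dividing $p$; its fixed points are exactly the $n$-periodic compositions, each of which is determined by its first $n$ parts, and these first $n$ parts form an $f$-weighted vector composition of $\mathbf{m}$ with $n$ parts. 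Lemma~\ref{lemma:comb} then yields $\binom{np}{\mathbf{m}p}_f\equiv\binom{n}{\mathbf{m}}_f\pmod{p}$ directly. I would present the Babbage route as the main proof, since it is two lines.
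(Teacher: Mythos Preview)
Your proposal is correct and matches the paper's approach exactly: the paper derives Theorem~\ref{theorem:special} in one line from Theorem~\ref{theorem:babbage} together with the observation (via Theorem~\ref{theorem:prime1}) that $g(\mathbf{x})\equiv f(\mathbf{x})\pmod{p}$. Your explicit justification of the step $\binom{n}{\mathbf{m}}_g\equiv\binom{n}{\mathbf{m}}_f\pmod{p}$ through the polynomial representation~\eqref{eq:repr1}, and the alternative cyclic-action argument, are welcome elaborations beyond what the paper spells out.
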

We use Theorem \ref{theorem:special} to prove a stronger version
of Theorem \ref{theorem:prime1}, namely:
\begin{theorem}\label{theorem:prime2}
  Let $p$ be prime and let $m\ge 1$, $\Bell\in\nn^N$. Then
  \begin{align*}
    \binom{p^m}{\Bell}_{f} \equiv 
    \begin{cases}
      f(\mathbf{m}) \pmod{p}, & \text{if $\Bell=p^m\mathbf{m}$ for some $\mathbf{m}$};\\ 
      0 \pmod{p}, & \text{else.}
    \end{cases}
  \end{align*}
\end{theorem}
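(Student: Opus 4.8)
The plan is to prove Theorem \ref{theorem:prime2} by induction on $m$, using Theorem \ref{theorem:special} to reduce the exponent $p^m$ down to $p$, at which point Theorem \ref{theorem:prime1} closes the argument. The base case $m=1$ is exactly Theorem \ref{theorem:prime1}. For the inductive step, suppose the claim holds for $m-1$ and let $\Bell\in\nn^N$. I would first apply Theorem \ref{theorem:special} with $n=p^{m-1}$, so that, writing $\binom{p^m}{\Bell}_f=\binom{p\cdot p^{m-1}}{\Bell}_f$, we get a nonzero residue modulo $p$ only when $\Bell$ is divisible (componentwise) by $p$, say $\Bell=p\mathbf{r}$, and in that case $\binom{p^m}{p\mathbf{r}}_f\equiv\binom{p^{m-1}}{\mathbf{r}}_f\pmod p$. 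If $\Bell$ is not of the form $p\mathbf{r}$, then in particular it is not of the form $p^m\mathbf{m}$, and we are done since the residue is $0$.

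In the remaining case $\Bell=p\mathbf{r}$, I would invoke the inductive hypothesis on $\binom{p^{m-1}}{\mathbf{r}}_f$: this is $\equiv 0\pmod p$ unless $\mathbf{r}=p^{m-1}\mathbf{m}$ for some $\mathbf{m}$, in which case it equals $f(\mathbf{m})\pmod p$. Chaining the two congruences, $\binom{p^m}{\Bell}_f\equiv 0\pmod p$ unless $\Bell=p\mathbf{r}=p\cdot p^{m-1}\mathbf{m}=p^m\mathbf{m}$, and then $\binom{p^m}{\Bell}_f\equiv f(\mathbf{m})\pmod p$, which is exactly the statement of the theorem for $m$. This completes the induction.

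The only subtlety worth flagging is the direction of the reduction: Theorem \ref{theorem:special} is stated as $\binom{np}{\mathbf{m}p}_f\equiv\binom{n}{\mathbf{m}}_f\pmod p$, i.e.\ it presupposes the top vector is a multiple of $p$. So one must argue separately (using Theorem \ref{theorem:special}'s proof ingredient, namely Theorem \ref{theorem:prime1} applied inside the Vandermonde convolution \eqref{eq:vandermonde}) that when $\Bell$ is \emph{not} divisible by $p$, the coefficient vanishes mod $p$; alternatively, one can phrase Theorem \ref{theorem:special} as the slightly stronger statement that $\binom{p^{m-1}\cdot p}{\Bell}_f\equiv 0\pmod p$ whenever $p\nmid\Bell$, which is immediate from expanding $\binom{p^m}{\Bell}_f$ via \eqref{eq:vandermonde} with all $n=p^{m-1}$ blocks of size $p$ and noting each factor $\binom{p}{\mathbf{k}_i}_f$ is divisible by $p$ unless $p\mid\mathbf{k}_i$. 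Apart from keeping this bookkeeping straight, the argument is a routine induction and I do not anticipate a genuine obstacle.
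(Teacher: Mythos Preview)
Your argument is correct and, for the case $\Bell=p^m\mathbf{m}$, it is exactly what the paper does: repeatedly apply Theorem~\ref{theorem:special} to drop one power of $p$ at a time until Theorem~\ref{theorem:prime1} applies. The treatment of the complementary case differs. You handle $p\nmid\Bell$ by expanding via the Vandermonde convolution \eqref{eq:vandermonde} into $p^{m-1}$ blocks of size $p$ and observing that at least one factor $\binom{p}{\mathbf{k}_i}_f$ must have $p\nmid\mathbf{k}_i$, hence is divisible by $p$ by Theorem~\ref{theorem:prime1}; this cleanly folds both cases into a single induction on $m$. The paper instead treats the case $\Bell\neq p^m\mathbf{m}$ separately via identity \eqref{eq:rec}: using $\binom{p^m}{i}\equiv 0\pmod p$ for $0<i<p^m$, it shows $\binom{p^m}{\Bell}_f\equiv\binom{p^m}{\Bell}_{f|_{f(\mathbf{m})=0}}\pmod p$ for every $\mathbf{m}$, and then kills all values of $f$ one at a time. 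Your route is arguably more uniform and avoids invoking the additional identity \eqref{eq:rec}; the paper's route has the minor advantage of not needing to verify the ``$p\nmid\Bell$'' side-case separately, since \eqref{eq:rec} handles all $\Bell\neq p^m\mathbf{m}$ at once.
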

\begin{proof}
  Let $\Bell=p^m\mathbf{m}$. Using Theorem 
\ref{theorem:special} twice, 
we find for $m=2$
  \begin{align*}
    \binom{p^2}{p^2\mathbf{m}}_f \equiv \binom{p}{p\mathbf{m}}_f
    \equiv f(\mathbf{m})\pmod{p}. 
  \end{align*}
  Using this, we find that: 
  \begin{align*}
    \binom{p^3}{p^3\mathbf{m}}_f \equiv \binom{p^2}{p^2\mathbf{m}}_f 
    \equiv 
    f(\mathbf{m})\pmod{p},
  \end{align*}
  and so on for any $m$.

  Consider now the case $\Bell\neq p^m\mathbf{m}$ for any $\mathbf{m}$. We use \eqref{eq:rec} from Theorem \ref{prop:convolution} together with the fact that $\binom{p^m}{n}\equiv 0\pmod{p}$ when $0< n<p^m$ and $\equiv 1 \pmod{p}$ whenever $n=1,p^m$. From this it follows that
\begin{align*}
  \binom{p^m}{\Bell}_f \equiv \binom{p^m}{\Bell}_{f|_{f(\mathbf{m})=0}}\pmod{p},
\end{align*}
for any $\mathbf{m}$. We can successively set all arguments of $f$ to zero and note that hence $\binom{p^m}{\Bell}_f \equiv 0\pmod{p}$. 
\end{proof}
Now, we consider the case when $\Bell$ in $\binom{np}{\Bell}_f$ is not of the
form $\mathbf{m}p$ for any $\mathbf{m}$. 
\begin{theorem}\label{theorem:babbage2}
  Let $p$ be prime and let $n$ be a nonnegative integer. Let $\Bell$ not be of the form $\Bell=p\mathbf{m}$, for any $\mathbf{m}$. Then 
  \begin{align*}
    \binom{np}{\Bell}_f \equiv
    n\cdot
    \sum_{\set{\mathbf{k}\in S(\Bell)\sd p\nmid\mathbf{k},\Bell-\mathbf{k}=\mathbf{x}p}}
    \binom{p}{\mathbf{k}}_f\binom{n-1}{\mathbf{x}}_g \pmod{p^2},
  \end{align*}
  where $g$ is as defined in Theorem \ref{theorem:babbage}. 
\end{theorem}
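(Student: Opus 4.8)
The plan is to mimic the proof of Babbage's congruence (Theorem~\ref{theorem:babbage}), but to keep track of the single ``defect'' index that is forced on us by the hypothesis that $\Bell$ is not of the form $p\mathbf{m}$. I would start from the Vandermonde convolution \eqref{eq:vandermonde} with $r=n$ and $k_1=\cdots=k_n=p$, writing $\binom{np}{\Bell}_f=\sum_{\mathbf{k}_1+\cdots+\mathbf{k}_n=\Bell}\binom{p}{\mathbf{k}_1}_f\cdots\binom{p}{\mathbf{k}_n}_f$. By Theorem~\ref{theorem:prime1}, each factor $\binom{p}{\mathbf{k}_i}_f$ is divisible by $p$ unless $\mathbf{k}_i$ has the form $\mathbf{r}_ip$ for some $\mathbf{r}_i\in\nn^N$; hence, modulo $p^2$, only those summands in which at most one index is ``exceptional'' (i.e.\ not of the form $\mathbf{r}_ip$) can contribute.

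Next I would rule out the summand in which no index is exceptional: there every $\mathbf{k}_i=\mathbf{r}_ip$, so $\Bell=(\mathbf{r}_1+\cdots+\mathbf{r}_n)p$, contradicting the hypothesis; thus exactly one index $j$ is exceptional in each surviving summand. Fixing $j$ (there are $n$ choices) and writing $\mathbf{k}_j=\mathbf{k}$ with $p\nmid\mathbf{k}$, the constraint $\sum_i\mathbf{k}_i=\Bell$ forces $\Bell-\mathbf{k}$ to be divisible by $p$, say $\Bell-\mathbf{k}=\mathbf{x}p$, and it remains to sum $\prod_{i\neq j}\binom{p}{\mathbf{r}_ip}_f=\prod_{i\neq j}g(\mathbf{r}_i)$ over all tuples $(\mathbf{r}_i)_{i\neq j}$ with $\sum_{i\neq j}\mathbf{r}_i=\mathbf{x}$. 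By the defining sum \eqref{eq:repr1} for $\binom{n-1}{\mathbf{x}}_g$, this collapses to $\binom{n-1}{\mathbf{x}}_g$. Since this inner expression does not depend on which index was the exceptional one, summing over the $n$ choices of $j$ produces the factor $n$ and the asserted sum over $\mathbf{k}\in S(\Bell)$ with $p\nmid\mathbf{k}$ and $\Bell-\mathbf{k}=\mathbf{x}p$, completing the congruence modulo $p^2$.

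I do not expect a genuine obstacle here; the delicate points are purely bookkeeping. They are: (i) a summand with two or more exceptional indices is divisible by $p^2$, since each such factor contributes a $p$ via Theorem~\ref{theorem:prime1}; (ii) there is no overcounting when we sum over $j$, because the exceptional index is uniquely determined by each surviving tuple, so the $n$ sub-sums range over disjoint families; and (iii) the exceptional part genuinely lies in $S(\Bell)$, namely $\mathbf{k}\neq\mathbf{0}$ (automatic, since $\mathbf{0}=\mathbf{0}\cdot p$ is non-exceptional) and $\mathbf{0}\le\mathbf{k}\le\Bell$ componentwise (automatic from nonnegativity of the remaining parts, with $\mathbf{x}=(\Bell-\mathbf{k})/p$ then well defined in $\nn^N$). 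The argument is essentially the single-defect refinement of the proof of Theorem~\ref{theorem:babbage}.
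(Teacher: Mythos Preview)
Your proposal is correct and follows essentially the same route as the paper: Vandermonde convolution with $n$ blocks of size $p$, then Theorem~\ref{theorem:prime1} to kill all terms with two or more non-multiples of $p$ modulo $p^2$, leaving exactly one exceptional index whose $n$ symmetric positions give the factor $n$ and whose remaining $n-1$ blocks collapse to $\binom{n-1}{\mathbf{x}}_g$. Your explicit treatment of the bookkeeping points (i)--(iii) is slightly more careful than the paper's presentation, but the argument is the same.
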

\begin{proof}
  By the Vandermonde convolution, \eqref{eq:vandermonde}, we find
  that
  \begin{align*}
    \binom{np}{\Bell}_f &=
    \sum_{\mathbf{k}_1+\cdots+\mathbf{k}_n=\Bell}\binom{p}{\mathbf{k}_1}_f\cdots\binom{p}{\mathbf{k}_n}_f \\
    &= \sum_{\mathbf{k}\in S(\Bell)} \binom{p}{\mathbf{k}}_f
    \sum_{\mathbf{k}_2+\cdots+\mathbf{k}_n=\Bell-\mathbf{k}}\binom{p}{\mathbf{k}_2}_f\cdots\binom{p}{\mathbf{k}_n}_f. 
  \end{align*}
  As in the proof of Theorem \ref{theorem:babbage2}, at least $n-1$ factors $\binom{p}{\mathbf{k}_j}_f$ must be such that $\mathbf{k}_j=\mathbf{r}_jp$.  
  Not all $n$ factors can be of the form
  $\mathbf{r}_jp$, since otherwise $\mathbf{k}_1+\cdots+\mathbf{k}_n=p(\mathbf{r}_1+\cdots+\mathbf{r}_n)=\Bell$, a contradiction. 
  Hence, exactly $n-1$ factors must be
  of the form $\mathbf{r}_jp$, and therefore, 
  \begin{align*}
    \binom{np}{\Bell}_f & \equiv n\sum_{\mathbf{k}\in S(\Bell),\mathbf{k}\neq \mathbf{r}p} \binom{p}{\mathbf{k}}_f
    \sum_{\mathbf{r}_2p+\cdots+\mathbf{r}_np=\Bell-\mathbf{k}}\binom{p}{\mathbf{r}_2p}_f\cdots\binom{p}{\mathbf{r}_np}_f
    \\
    \\
    &= n\sum_{\mathbf{k}\in S(\Bell),p\nmid \mathbf{k}} \binom{p}{\mathbf{k}}_f
    \sum_{\mathbf{r}_2p+\cdots+\mathbf{r}_np=\Bell-\mathbf{k}}g(\mathbf{r}_2)\cdots
    g(\mathbf{r}_n)\pmod{p^2}. 
  \end{align*}
  Now, the
  equation $p(\mathbf{r}_2+\cdots+\mathbf{r}_n)=\Bell-\mathbf{k}$ has solutions
  if and only if $\divides{p}{\Bell-\mathbf{k}}$, that is, when there exists $\mathbf{x}$
  such that $\Bell-\mathbf{k}=\mathbf{x}p$. 
\end{proof}
\begin{example}
  Let $n=4$, $p=3$ and $\Bell=(2,3)$. In this situation, the only
  suitable $\mathbf{k}$ in the previous theorem is
  $\mathbf{k}=(2,3)$ to which corresponds $\mathbf{x}=(0,0)$. The theorem thus implies that
  \begin{align*}
    \binom{12}{(2,3)}_f \equiv
    4\cdot\binom{3}{(2,3)}_f\cdot\binom{3}{(0,0)}_g\pmod{p^2}. 
  \end{align*}
  Let $f(\mathbf{s})=s_1+s_2+1$ for all
  $\mathbf{s}\in\set{(0,0),(0,1),(1,0),(1,1)}$ and $f(\mathbf{s})=0$ otherwise. Then
  $\binom{3}{(0,0)}_g=1$ since
  $g((0,0))=\binom{3}{(0,0)}_f=1$. Moreover,
  $\binom{3}{(2,3)}_f=54$. Therefore 
  \begin{align*}
    4\cdot\binom{3}{(2,3)}_f\cdot\binom{3}{(0,0)}_g \equiv 0\pmod{9}.
  \end{align*}
  Indeed,
  \begin{align*}
    \binom{12}{(2,3)}_f = 407,880 = 45,320\cdot 9.
  \end{align*}
\end{example}
\begin{theorem}\label{theorem:divis}
Let $k\ge 0$, $\Bell\in \mathbb{N}^N$. 
Let $d_i=\gcd(k,\ell_i)$ and let $t_i=\frac{k}{d_i}$. Then
\begin{align*}
\binom{k}{\Bell}_{f}\equiv 0\pmod{t_i}
\end{align*}
for all $i=1,\ldots,N$. Equivalently,
\begin{align*}
\binom{k}{\Bell}_{f}\equiv 0\pmod{M}.
\end{align*}
Here, $M$ is the number $M=p_1^{m_1}\cdots p_R^{m_R}$, where the $t_i$
have prime factorization $t_i=p_1^{{(a_i)}_1}\cdots p_R^{{(a_i)}_R}$
and where $m_j=\max_i {(a_i)}_j$, for all $j=1,\ldots,R$. 
\end{theorem}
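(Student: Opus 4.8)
The plan is to run the same machinery as in the third proof of Theorem \ref{theorem:prime1}: combine the partition representation \eqref{eq:repr} with the divisibility property \eqref{eq:multi} of multinomial coefficients. We may assume $k\ge 1$, the case $k=0$ being trivial (and degenerate when $\Bell=\mathbf 0$). By \eqref{eq:repr},
\begin{align*}
  \binom{k}{\Bell}_f = \sum_{(r_1,r_2,\ldots)\in\mathcal{P}(\Bell;k)}\binom{k}{r_1,r_2,\ldots}\prod_{j}f(\mathbf{s}_j)^{r_j},
\end{align*}
so for a fixed coordinate $i$ it suffices to show that $t_i$ divides each multinomial coefficient $\binom{k}{r_1,r_2,\ldots}$ occurring in this sum (the case $\mathcal{P}(\Bell;k)=\emptyset$ giving $\binom{k}{\Bell}_f=0$, which is divisible by everything).

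Fix a partition $(r_1,r_2,\ldots)\in\mathcal{P}(\Bell;k)$ and put $d=\gcd(r_1,r_2,\ldots)$. Since $\sum_j r_j=k$ we get $d\mid k$, and reading off the $i$-th coordinate of the defining identity $\sum_j r_j\mathbf{s}_j=\Bell$ gives $\ell_i=\sum_j r_j (s_j)_i$, whence $d\mid\ell_i$. Therefore $d\mid\gcd(k,\ell_i)=d_i$, so $t_i=k/d_i$ divides $k/d$. By \eqref{eq:multi}, $k/d$ divides $\binom{k}{r_1,r_2,\ldots}$, and hence so does $t_i$. Summing over all partitions yields $\binom{k}{\Bell}_f\equiv 0\pmod{t_i}$ for every $i=1,\ldots,N$. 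For the equivalent reformulation, an integer is divisible by each of $t_1,\ldots,t_N$ precisely when it is divisible by $\mathrm{lcm}(t_1,\ldots,t_N)$, and with the stated prime factorizations $t_i=p_1^{(a_i)_1}\cdots p_R^{(a_i)_R}$ one has $\mathrm{lcm}(t_1,\ldots,t_N)=\prod_{j=1}^R p_j^{m_j}=M$; this is immediate from the definition of least common multiple.

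There is no serious obstacle here: the only point requiring care is the divisibility chain $d\mid r_j\ (\forall j)\Rightarrow d\mid k\text{ and }d\mid\ell_i\Rightarrow d\mid d_i\Rightarrow t_i\mid k/d$, which must be carried out separately for each coordinate $i$, everything else being bookkeeping. Part size $\mathbf{0}$ (which $\mathcal{P}(\Bell;k)$ admits) causes no difficulty, since it contributes $0$ to every $\ell_i$ while its multiplicity is still a multiple of $d$.
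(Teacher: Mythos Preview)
Your proof is correct, but it takes a different route from the paper's. The paper argues via the absorption identity \eqref{eq:absorption} with $i=1$: writing $\Bell\binom{k}{\Bell}_f = k\mathbf{m}$ for a suitable integer vector $\mathbf{m}$, one reads off the $i$-th component, divides by $d_i=\gcd(k,\ell_i)$, and uses $\gcd(k/d_i,\ell_i/d_i)=1$ to force $t_i\mid\binom{k}{\Bell}_f$. This is a two-line argument once \eqref{eq:absorption} is in hand. Your approach instead mirrors the third proof of Theorem~\ref{theorem:prime1}: you expand $\binom{k}{\Bell}_f$ via \eqref{eq:repr} and show that \emph{every} multinomial coefficient in the sum is already divisible by $t_i$, by tracking $d=\gcd(r_1,r_2,\ldots)$ through the constraints $\sum_j r_j=k$ and $\sum_j r_j(s_j)_i=\ell_i$ and then invoking \eqref{eq:multi}. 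The paper's route is slicker and avoids summing over partitions altogether; yours gives finer information (term-by-term divisibility rather than divisibility of the total) and makes the parallel with Theorem~\ref{theorem:prime1} explicit, at the cost of a bit more bookkeeping.
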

\begin{proof}
From \eqref{eq:absorption}, with $i=1$, write 
\begin{align*}
  {\Bell}\binom{k}{\Bell}_{f} = {k}\underbrace{\sum_{\mathbf{s}}\mathbf{s}f(\mathbf{s})\binom{k-1}{\Bell-\mathbf{s}}_{f}}_{=:\mathbf{m}\in\nn^N}.
\end{align*}
Now, for any $1\le i\le N$, consider this equation at component $i$, dividing by $d_i=\gcd(k,\ell_i)$:
\begin{align*}
  \frac{\ell_i}{d_i}\binom{k}{\Bell}_f = \frac{k}{d_i}m_i.
\end{align*}
Since $\gcd(k/d_i,\ell_i/d_i)=1$, this means that $\frac{k}{d_i}\mid \binom{k}{\Bell}_f$ for all $i=1,\ldots,N$. 
\end{proof}
\begin{example}
  Let $f$ be the indicator function on the set \\
  $\set{(1,0),(0,1),(1,1),(1,2),(2,1),(0,0)}$. Enumeration shows that
  \begin{align*}
    \binom{12}{(9,8)}_f = 44,742,060
  \end{align*}
  We have $t_1=12/3=4$ and $t_2=12/4=3$. Hence $4\cdot 3$ divides $\binom{12}{(9,8)}_f$, and indeed, $44,742,060 = 12\cdot 3,728,505$. 
\end{example}

\begin{theorem}\label{theorem:ms}
  Let $p$ be prime, $n\ge 1$ arbitrary.
  Then, 
\begin{align*}
 \binom{pn}{p\mathbf{\mathbf{1}}}_{f}\equiv \sum_{k=1}^n\sum_{(r_1,r_2,\ldots)\in\mathcal{P}(\mathbf{1};k)}&\frac{(pn)!}{(pr_1)!(pr_2)!\cdots(p(n-k))!}\cdot 
   \\ &f(\mathbf{0})^{p(n-k)}h(\mathbf{s}_1)h(\mathbf{s}_2)\cdots\pmod{pn},
\end{align*}
where $h(\mathbf{s})=\begin{cases}f(\mathbf{s})^p,& \text{if } \mathbf{s}\in U; \\ 0, & \text{else};\end{cases}$ for $U=\set{\mathbf{x}\neq\mathbf{0}\in \nn^N\sd x_i\in\set{0,1}}$.
\end{theorem}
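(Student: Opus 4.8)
The plan is to expand $\binom{pn}{p\mathbf{1}}_f$ by the vector-partition identity \eqref{eq:repr} and then, modulo $pn$, to throw away every vector partition of $p\mathbf{1}$ whose multiplicities have greatest common divisor $1$. Concretely, \eqref{eq:repr} with $k=pn$ and $\Bell=p\mathbf{1}$ reads
\begin{align*}
  \binom{pn}{p\mathbf{1}}_f=\sum_{(\rho_1,\rho_2,\ldots)\in\mathcal{P}(p\mathbf{1};pn)}\binom{pn}{\rho_1,\rho_2,\ldots}\prod_i f(\mathbf{s}_i)^{\rho_i},
\end{align*}
the sum being over vector partitions of $p\mathbf{1}$ into $pn$ parts, part size $\mathbf{0}$ included. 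For a fixed such partition put $d=\gcd(\rho_1,\rho_2,\ldots)$. Since $d\mid\rho_i$ for all $i$ and $\sum_i\rho_i\mathbf{s}_i=p\mathbf{1}$, the integer $d$ divides every component of $p\mathbf{1}$, hence $d\mid p$; as $p$ is prime, $d\in\set{1,p}$.

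The decisive step is then to invoke the multinomial divisibility \eqref{eq:multi}: $\binom{pn}{\rho_1,\rho_2,\ldots}\equiv 0\pmod{pn/d}$. When $d=1$ this gives $\binom{pn}{\rho_1,\rho_2,\ldots}\equiv 0\pmod{pn}$, so every partition with coprime multiplicities contributes nothing modulo $pn$ and may be deleted. Hence, modulo $pn$, only partitions with $p\mid\rho_i$ for all $i$ survive; writing $\rho_i=p\sigma_i$, they satisfy $\sum_i\sigma_i=n$ and $\sum_i\sigma_i\mathbf{s}_i=\mathbf{1}$, i.e.\ $(\sigma_1,\sigma_2,\ldots)$ is a vector partition of $\mathbf{1}$ into $n$ parts. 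Because $S(\mathbf{1})=U$ and the nonzero parts of a vector partition of $\mathbf{1}$ are pairwise distinct (as observed after Lemma \ref{lemma:setp}), such a partition amounts to a choice of $k$ distinct part sizes from $U$, each of multiplicity $1$, together with $\mathbf{0}$ of multiplicity $n-k$, for some $0\le k\le n$; and $k\ge 1$ since $\mathbf{1}\neq\mathbf{0}$, so $k$ ranges over $1,\ldots,n$.

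It then remains to substitute $\rho_i=p\sigma_i$ and relabel. For a surviving partition with $k$ nonzero parts one has $\binom{pn}{\rho_1,\rho_2,\ldots}=\frac{(pn)!}{(pr_1)!(pr_2)!\cdots(p(n-k))!}$, where the $r_i\in\set{0,1}$ are the multiplicities of the nonzero part sizes and $p(n-k)$ is the scaled multiplicity of $\mathbf{0}$; and since every appearing nonzero part lies in $U$,
\begin{align*}
  \prod_i f(\mathbf{s}_i)^{\rho_i}=f(\mathbf{0})^{p(n-k)}\prod_{\mathbf{s}_i\neq\mathbf{0}}\bigl(f(\mathbf{s}_i)^{p}\bigr)^{r_i}=f(\mathbf{0})^{p(n-k)}\,h(\mathbf{s}_1)h(\mathbf{s}_2)\cdots,
\end{align*}
using $h=f^p$ on $U$ (and $h=0$ off $U$, which is harmless since $r_i\in\set{0,1}$). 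Summing over $k=1,\ldots,n$ and over $(r_1,r_2,\ldots)\in\mathcal{P}(\mathbf{1};k)$ reproduces exactly the right-hand side of the theorem. I expect the main obstacle to be the second step --- forcing $d\in\set{1,p}$ and checking that \eqref{eq:multi} kills precisely the $d=1$ partitions modulo $pn$; after that the argument is bookkeeping (matching the multinomial coefficients and the $h$-products to the stated form, and confirming the range $1\le k\le n$). An alternative I would keep in reserve is a group-action proof: let the cyclic group of order $pn$ act by rotation on the $f$-weighted vector compositions of $p\mathbf{1}$ into $pn$ parts; every orbit has size $n$ or $pn$ (a proper period would force a non-integral average part), so $\binom{pn}{p\mathbf{1}}_f$ is congruent modulo $pn$ to $n$ times the number of size-$n$ orbits, which can likewise be unwound into the claimed sum.
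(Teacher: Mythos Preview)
Your proposal is correct and follows essentially the same route as the paper: expand $\binom{pn}{p\mathbf{1}}_f$ via \eqref{eq:repr}, argue $d=\gcd(\rho_1,\rho_2,\ldots)\in\{1,p\}$ from $d\mid p$, invoke \eqref{eq:multi} to kill the $d=1$ terms modulo $pn$, and identify the surviving $d=p$ terms with vector partitions of $\mathbf{1}$ into $k$ distinct nonzero parts from $U$ together with $\mathbf{0}$ of multiplicity $n-k$. Your write-up is in fact slightly more explicit than the paper's in the bookkeeping step (the substitution $\rho_i=p\sigma_i$ and the observation $S(\mathbf{1})=U$), and the group-action alternative you sketch is not used here but parallels the paper's first proof of Theorem~\ref{theorem:prime1}.
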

In the theorem, note that $\frac{(pn)!}{(pr_1)!(pr_2)!\cdots(p(n-k))!}=\frac{(pn)!}{(p!)^k(p(n-k))!}$. Also note that the limit of the summation for $k$ is (more adequately described as) $\min\set{n,N}$.  
\begin{proof}
From \eqref{eq:repr},
$\binom{pn}{p\mathbf{1}}_{f}$ can be written as 
\begin{align}\label{eq:help}
  \binom{pn}{p\mathbf{\mathbf{1}}}_{f} =
  \sum_{\substack{r_1+r_2+\cdots=pn,\\ \sum_{\mathbf{s}_i\in S(p\mathbf{1})} r_i\mathbf{s}_i=p\mathbf{1}}}\binom{pn}{r_1,r_2,\ldots}\prod_{\mathbf{s}_i\in S(p\mathbf{1})} f(\mathbf{s}_i)^{r_i}. 
\end{align}
For a term in the sum, 
either $d=\gcd(r_1,r_2,\ldots)=1$ or $d=p$, since otherwise, if $1<d<p$, 
then, $d\cdot\sum_{\mathbf{s}_i\in S(p\mathbf{1})} \frac{r_i}{d}\mathbf{s}_i=p\mathbf{1}$, whence $p$ is composite, a contradiction.
Those terms on the RHS of \eqref{eq:help} for which $d=1$ contribute
nothing to the sum modulo $pn$, by \eqref{eq:multi}, so they
can be ignored.  But, from the equation $\sum_{\mathbf{s}_i\in
  S(p\mathbf{1})} r_i\mathbf{s}_i=p\mathbf{1}$, the case $d=p$ happens
precisely when:
\begin{itemize}
  \item
    there are $k$ unit vectors $\mathbf{s}_1,\ldots,\mathbf{s}_k\in
    U$, for $1\le k\le n$, each of whose associated multiplicity is $p$, as well as the zero vector $\mathbf{0}$, whose multiplicity is $p(n-k)$, such that $\mathbf{s}_1+\cdots+\mathbf{s}_k+\mathbf{0}=\mathbf{1}$. 
\end{itemize}
\end{proof}
\begin{example}\label{example:ms}
  When $N=1$, then $U=\set{1}$. Hence, \\
  $\binom{pn}{p}_f \equiv \binom{pn}{p}f(0)^{p(n-1)}f(1)^p\pmod{pn}$ because only the term $k=1$ leads to a valid solution, since $1$ cannot be the sum of two or more elements from $U$. When $N=2$, then $U=\set{(0,1),(1,0),(1,1)}$ and the relevant terms are $k=1,2$. The formula becomes 
\begin{align*}
  \binom{pn}{p(1,1)}_f \equiv & \binom{pn}{p}f((0,0))^{p(n-1)}f((1,1))^p\\
  &+\frac{(pn)!}{(p!)^2(p(n-2))!}f((0,0))^{p(n-2)}f((0,1))^pf((1,0))^p\pmod{pn}. 
\end{align*}
\end{example}

Recall that the ordinary binomial coefficients satisfy Lucas'
theorem, namely,
\begin{align*}
  \binom{k}{n} \equiv \prod\binom{k_i}{n_i}\pmod{p},
\end{align*}
whenever $k=\sum k_ip^i$ and $n=\sum n_ip^i$ with $0\le
n_i,k_i<p$. 
Bollinger and
Burchard \cite{Bollinger:1990} generalize this to extended binomial coefficients. 
We
further generalize 
to weighted vector compositions. 
\begin{theorem}[Lucas' theorem]\label{theorem:lucas}
  Let $p$ be prime and let 
  $k=\sum_{j=0}^r k_jp^j$, where $0\le k_j<p$ for $j=0,\ldots,r$. 
  Let $\Bell\in\nn^N$. 
  Then
  \begin{align*}
   \binom{k}{\Bell}_{f} \equiv \sum_{(\mathbf{m}_0,\ldots,\mathbf{m}_r)}\prod_{i=0}^r \binom{k_i}{\mathbf{m}_i}_{f}\pmod{p},
  \end{align*}
  whereby the sum is over all $(\mathbf{m}_0,\ldots,\mathbf{m}_r)$ that satisfy
  $\mathbf{m}_0+\mathbf{m}_1p+\cdots+\mathbf{m}_rp^r=\Bell$. 
\end{theorem}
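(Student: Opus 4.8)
The plan is to prove this directly from the multivariate generating function of Theorem~\ref{theorem:main}, using the ``freshman's dream'' (Frobenius) identity modulo a prime. Write $\Phi(\mathbf{x})=\sum_{\mathbf{s}\in\nn^N}f(\mathbf{s})\mathbf{x}^{\mathbf{s}}$, so that $F(\mathbf{x};k)=\Phi(\mathbf{x})^k$ and, by Theorem~\ref{theorem:main}, $\binom{k}{\Bell}_f=[\mathbf{x}^{\Bell}]\Phi(\mathbf{x})^k$. As in the basic definition, $f$ is integer-valued, which is what makes the congruences meaningful.

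The key lemma is: for every $j\ge 0$, $\Phi(\mathbf{x})^{p^j}\equiv\Phi(\mathbf{x}^{p^j})\pmod p$ as formal power series, where $\mathbf{x}^{p^j}=(x_1^{p^j},\ldots,x_N^{p^j})$. For $j=1$ this follows from the multinomial expansion of $\Phi(\mathbf{x})^p$: every multinomial coefficient $\binom{p}{a_1,a_2,\ldots}$ is divisible by $p$ unless some $a_i=p$ (all others $0$), in which case it equals $1$ and contributes $f(\mathbf{s}_i)^p\mathbf{x}^{p\mathbf{s}_i}$; since $f(\mathbf{s}_i)^p\equiv f(\mathbf{s}_i)\pmod p$ by Fermat's little theorem, this gives $\Phi(\mathbf{x})^p\equiv\sum_{\mathbf{s}}f(\mathbf{s})\mathbf{x}^{p\mathbf{s}}=\Phi(\mathbf{x}^p)\pmod p$. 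Iterating the identity $j$ times gives the general case.

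Now expand $\Phi(\mathbf{x})^k=\prod_{j=0}^r\bigl(\Phi(\mathbf{x})^{p^j}\bigr)^{k_j}\equiv\prod_{j=0}^r\Phi(\mathbf{x}^{p^j})^{k_j}\pmod p$. Applying Theorem~\ref{theorem:main} with the substituted variables $\mathbf{x}^{p^j}$ gives $\Phi(\mathbf{x}^{p^j})^{k_j}=\sum_{\mathbf{m}_j\in\nn^N}\binom{k_j}{\mathbf{m}_j}_f\mathbf{x}^{p^j\mathbf{m}_j}$. Multiplying these $r+1$ series and collecting terms, the coefficient of $\mathbf{x}^{\Bell}$ in the product equals $\sum\prod_{j=0}^r\binom{k_j}{\mathbf{m}_j}_f$, the sum ranging over all $(\mathbf{m}_0,\ldots,\mathbf{m}_r)$ with $\mathbf{m}_0+p\mathbf{m}_1+\cdots+p^r\mathbf{m}_r=\Bell$. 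Comparing the coefficient of $\mathbf{x}^{\Bell}$ on both sides modulo $p$ yields the theorem.

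The only point requiring care — hence the main obstacle, such as it is — is the passage $\Phi(\mathbf{x})^p\equiv\Phi(\mathbf{x}^p)\pmod p$ for a possibly infinite power series: one must observe that each coefficient of $\Phi(\mathbf{x})^p$ is a finite $\z$-linear combination of products $f(\mathbf{s}_{i_1})\cdots f(\mathbf{s}_{i_p})$, so the congruence is interpreted coefficientwise, and that only the ``diagonal'' terms $f(\mathbf{s})^p$ survive modulo $p$. Everything else is routine bookkeeping. (Alternatively one can avoid generating functions and argue by induction on $r$: write $k=k_0+pk'$ with $k'=\sum_{j=0}^{r-1}k_{j+1}p^j$, apply the Vandermonde convolution \eqref{eq:vandermonde} with $k_1=k_0$, $k_2=pk'$, use Theorem~\ref{theorem:prime1} to discard the terms $\binom{pk'}{\mathbf{b}}_f$ with $p\nmid\mathbf{b}$, invoke Theorem~\ref{theorem:special} to replace $\binom{pk'}{p\mathbf{m}}_f$ by $\binom{k'}{\mathbf{m}}_f$ modulo $p$, and then apply the induction hypothesis to $k'$.)
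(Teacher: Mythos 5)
Your proof is correct and follows essentially the same route as the paper: expand the generating function as $\prod_{j}\Phi(\mathbf{x})^{k_jp^j}$, reduce each factor $\Phi(\mathbf{x})^{p^j}$ to $\Phi(\mathbf{x}^{p^j})$ modulo $p$, and compare coefficients of $\mathbf{x}^{\Bell}$. The only cosmetic difference is that the paper cites Theorem~\ref{theorem:prime2} for the congruence $\Phi(\mathbf{x})^{p^j}\equiv\sum_{\mathbf{m}}f(\mathbf{m})\mathbf{x}^{p^j\mathbf{m}}\pmod{p}$, whereas you re-derive it by iterating the Frobenius identity.
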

\begin{proof} We have
  \begin{align*}
    \sum_{\Bell\in\nn^N}\binom{k}{\Bell}_f\mathbf{x}^{\Bell} &= \left(\sum_{\mathbf{s}\in\nn^N}f(\mathbf{s})\mathbf{x}^{\mathbf{s}}\right)^k = \prod_{j=0}^r\left(\sum_{\mathbf{s}\in\nn^N}f(\mathbf{s})\mathbf{x}^{\mathbf{s}}\right)^{k_jp^j}\\
    &= \prod_{j=0}^r\left(\sum_{\mathbf{s}\in\nn^N}\binom{p^j}{\mathbf{s}}_f\mathbf{x}^{\mathbf{s}}\right)^{k_j} \\
    &\equiv \prod_{j=0}^r \left(\sum_{\mathbf{m}\in\nn^N}f(\mathbf{m})\mathbf{x}^{p^j\mathbf{m}}\right)^{k_j}
    = \prod_{j=0}^r\left(\sum_{\mathbf{m}\in\nn^N}\binom{k_j}{\mathbf{m}}_f\mathbf{x}^{p^j\mathbf{m}}\right) \\
    &= \sum_{\Bell\in\nn^N}\left(\sum_{(\mathbf{m}_0,\ldots,\mathbf{m}_r)}\binom{k_0}{\mathbf{m}_0}_f\cdots\binom{k_r}{\mathbf{m}_r}_f\right)\mathbf{x}^{\Bell}\pmod{p},
  \end{align*}
  where the fourth relation (congruence) follows from Theorem \ref{theorem:prime2},
  and the theorem follows by comparing the coefficients of $\mathbf{x}^{\Bell}$. 
\end{proof}
\begin{example}\label{example:8}
  For a similar situation as in Example \ref{example:170}, let $p=3$ and $k=5=2+1\cdot p$. Thus, $(k_0,k_1)=(2,1)$. For $\Bell=(3,6)$, the relevant $(\mathbf{m}_0,\mathbf{m}_1)$ such that $\Bell=\mathbf{m}_0+p\mathbf{m}_1$ are:
  \begin{align*}
    (3,6) = (0,0)+3(1,2) = (0,3)+3(1,1)=(3,3)+3(0,1)=(0,6)+3(1,0). 
  \end{align*}
  No other $\mathbf{m}_1$ must be looked at, because $k_1=1$ and $\binom{1}{\mathbf{x}}_f=f(\mathbf{x})$ and the specified $f$ is zero outside $\set{(0,1),(1,0),(1,1),(1,2),(2,1)}$. Hence:
  \begin{align*}
    \binom{5}{(3,6)}_f &\equiv  \binom{2}{(0,0)}_f\cdot \binom{1}{(1,2)}_f+ \binom{2}{(0,3)}_f\cdot \binom{1}{(1,1)}_f\\
  &+\binom{2}{(3,3)}_f\cdot \binom{1}{(0,1)}_f
  + \binom{2}{(0,6)}_f\cdot \binom{1}{(1,0)}_f \\
    &= 0\cdot 1+0\cdot 1+2\cdot 1+0\cdot 1 = 2 \pmod{p},
  \end{align*}
  by Theorem \ref{theorem:lucas}, which is true, since $\binom{5}{(3,6)}_f=80$. 
\end{example}

Our final result in this section allows a fast computation of the
coefficients $\binom{k}{\Bell}_f$ modulo a prime $p$. See Granville
\cite{Granville:1997} for the corresponding result for the special case of
ordinary binomial coefficients. 
\begin{theorem}\label{theorem:13}
Let $p$ be prime, $k\ge 0$, $\Bell\in\nn^N$. Then, 
\begin{align*}
\binom{k}{\Bell}_{f} \equiv \sum_{\mathbf{m}\in\nn^N}\binom{k_1}{\mathbf{x} -\mathbf{m}}_{f}\binom{k_0}{\Bell_0+\mathbf{m}p}_f\pmod{p},
\end{align*}
whereby $k=k_0+k_1p$ with $0\le k_0<p$, and $\Bell=\Bell_0+\mathbf{x}p$, where each component $\ell$ of $\Bell_0$ satisfies $0\le \ell<p$. 
\end{theorem}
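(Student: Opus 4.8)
The plan is to mimic the generating-function argument used in the proof of Theorem~\ref{theorem:lucas}, but tracking only two base-$p$ ``digits'' of $k$ rather than all of them. Put $F(\mathbf{z})=\sum_{\mathbf{s}\in\nn^N}f(\mathbf{s})\mathbf{z}^{\mathbf{s}}$, so by Theorem~\ref{theorem:main} we have $F(\mathbf{z})^k=\sum_{\Bell}\binom{k}{\Bell}_f\mathbf{z}^{\Bell}$, and split $F(\mathbf{z})^k=F(\mathbf{z})^{k_0}\cdot\bigl(F(\mathbf{z})^p\bigr)^{k_1}$. By Theorem~\ref{theorem:main} together with Theorem~\ref{theorem:prime1}, $F(\mathbf{z})^p=\sum_{\mathbf{s}}\binom{p}{\mathbf{s}}_f\mathbf{z}^{\mathbf{s}}\equiv\sum_{\mathbf{m}}f(\mathbf{m})\mathbf{z}^{p\mathbf{m}}=F(\mathbf{z}^p)\pmod{p}$, where congruence of power series means coefficientwise congruence. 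Raising this to the $k_1$-th power in $(\z/p\z)[[\mathbf{z}]]$ gives $\bigl(F(\mathbf{z})^p\bigr)^{k_1}\equiv F(\mathbf{z}^p)^{k_1}=\sum_{\mathbf{a}\in\nn^N}\binom{k_1}{\mathbf{a}}_f\mathbf{z}^{p\mathbf{a}}\pmod{p}$.

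Multiplying by $F(\mathbf{z})^{k_0}=\sum_{\mathbf{b}}\binom{k_0}{\mathbf{b}}_f\mathbf{z}^{\mathbf{b}}$ yields
\[
F(\mathbf{z})^k\equiv\sum_{\mathbf{a},\mathbf{b}\in\nn^N}\binom{k_1}{\mathbf{a}}_f\binom{k_0}{\mathbf{b}}_f\,\mathbf{z}^{\mathbf{b}+p\mathbf{a}}\pmod{p}.
\]
Comparing coefficients of $\mathbf{z}^{\Bell}$ and writing $\Bell=\Bell_0+p\mathbf{x}$ as in the statement, the contributing pairs $(\mathbf{a},\mathbf{b})$ are those with $\mathbf{b}+p\mathbf{a}=\Bell$; parametrising them by $\mathbf{a}=\mathbf{x}-\mathbf{m}$ forces $\mathbf{b}=\Bell-p(\mathbf{x}-\mathbf{m})=\Bell_0+p\mathbf{m}$, and letting $\mathbf{m}$ range over $\nn^N$ gives exactly
\[
\binom{k}{\Bell}_f\equiv\sum_{\mathbf{m}\in\nn^N}\binom{k_1}{\mathbf{x}-\mathbf{m}}_f\binom{k_0}{\Bell_0+\mathbf{m}p}_f\pmod{p},
\]
the desired identity. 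The sum is finite: a nonzero term requires $\mathbf{x}-\mathbf{m}\in\nn^N$ and $\Bell_0+p\mathbf{m}\in\nn^N$, which (since $0\le(\Bell_0)_i<p$) forces $\mathbf{0}\le\mathbf{m}\le\mathbf{x}$.

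A shorter route avoids generating functions altogether: the Vandermonde convolution \eqref{eq:vandermonde} with $r=2$ and $(k_1,k_2)=(k_0,k_1p)$ gives $\binom{k}{\Bell}_f=\sum_{\mathbf{q}_1+\mathbf{q}_2=\Bell}\binom{k_0}{\mathbf{q}_1}_f\binom{k_1p}{\mathbf{q}_2}_f$; every summand with $\mathbf{q}_2$ not of the form $p\mathbf{m}$ is $\equiv0\pmod{p}$ (apply Theorem~\ref{theorem:babbage2} with $n=k_1$, whose right-hand side is a multiple of $p$ by Theorem~\ref{theorem:prime1}), while $\binom{k_1p}{p\mathbf{m}}_f\equiv\binom{k_1}{\mathbf{m}}_f\pmod{p}$ by Theorem~\ref{theorem:special}; substituting and reindexing $\mathbf{m}\mapsto\mathbf{x}-\mathbf{m}$ recovers the same formula. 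I do not expect a genuine obstacle here: both arguments merely assemble results already proved in Sections~\ref{sec:prelim}--\ref{sec:congruence1}. The one step that deserves to be stated carefully is $F(\mathbf{z})^p\equiv F(\mathbf{z}^p)\pmod{p}$, which in this weighted setting is Theorem~\ref{theorem:prime1} and not the naive Frobenius identity, since $f$ need not take values in $\set{0,\ldots,p-1}$; everything else (the finiteness of the sum and the cosmetic reindexing that puts the answer in the ``one-digit-shift'' form of Granville~\cite{Granville:1997}) is routine bookkeeping.
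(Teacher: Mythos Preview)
Your primary generating-function argument is essentially identical to the paper's own proof: split $F(\mathbf{z})^k=F(\mathbf{z})^{k_0}\bigl(F(\mathbf{z})^p\bigr)^{k_1}$, invoke Theorem~\ref{theorem:prime1} to get $F(\mathbf{z})^p\equiv F(\mathbf{z}^p)\pmod{p}$, expand, and compare coefficients with a final re-indexing. Your second route via Vandermonde plus Theorems~\ref{theorem:special} and~\ref{theorem:babbage2} is a correct alternative the paper does not give; it trades the power-series manipulation for a direct convolution argument, at the cost of citing slightly heavier machinery (Theorem~\ref{theorem:babbage2}) where a bare application of Theorem~\ref{theorem:prime1} inside the Vandermonde expansion of $\binom{k_1p}{\mathbf{q}_2}_f$ would already suffice.
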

\begin{proof}
  We have
  \begin{align*}
    \left(\sum_{\mathbf{m}\in\nn^N}f(\mathbf{m})\mathbf{x}^{\mathbf{m}}\right)^p \equiv \sum_{\mathbf{m}\in\nn^N}
    f(\mathbf{m})\mathbf{x}^{p\mathbf{m}} \pmod{p}
  \end{align*}
  by Theorem \ref{theorem:prime1} and therefore, with $k=k_0+k_1p$,
  for $0\le k_0<p$, 
  \begin{align*}
    \left(\sum_{\mathbf{s}}f(\mathbf{s})\mathbf{x}^{\mathbf{s}}\right)^{k_0+k_1p} &\equiv
    \left(\sum_{\mathbf{t}}f(\mathbf{t})\mathbf{x}^{\mathbf{t}}\right)^{k_0} \left(\sum_{\mathbf{s}}
    f(\mathbf{s})\mathbf{x}^{p\mathbf{s}}\right)^{\lfloor k/p \rfloor} \\
    &=
    \sum_{\mathbf{t},\mathbf{s}}\binom{k_0}{\mathbf{t}}_f\binom{\lfloor
      k/p\rfloor}{\mathbf{s}}_f\mathbf{x}^{p\mathbf{s}+\mathbf{t}}
    \pmod{p}.
  \end{align*}
  Now, since $\binom{k}{\Bell}_f$ is the coefficient of $\mathbf{x}^{\Bell}$ of
  $\left(\sum_{\mathbf{s}}f(\mathbf{s})\mathbf{x}^{\mathbf{s}}\right)^{k_0+k_1p}$, we have 
  \begin{align*}
    \binom{k}{\Bell}_f \equiv \sum_{p\mathbf{s}+\mathbf{t}=\Bell}\binom{\lfloor
      k/p\rfloor}{\mathbf{s}}_f\binom{k_0}{\mathbf{t}}_f\pmod{p},
  \end{align*}
  and the theorem follows after re-indexing the summation on the RHS. 
\end{proof}
\begin{example}
  In the situation of Example \ref{example:8}, consider $p=3$,
  $\Bell=(3,6)=(0,0)+(1,2)\cdot p$ and $k=5=2+1\cdot p$. 
  By Theorem \ref{theorem:13}, we have hence to consider sums of
  products of the form 
  \begin{align*}
    \binom{1}{(1,2)-\mathbf{m}}_f\cdot\binom{2}{(0,0)+p\mathbf{m}}_f=f((1,2)-\mathbf{m})\cdot\binom{2}{p\mathbf{m}}_f. 
  \end{align*}
  Since $f$ is zero outside of $\set{(0,1),(1,0),(1,1),(1,2),(2,1)}$, $\mathbf{m}$ ranges over \\
$\set{(0,0),(0,1),(1,1),(0,2)}$ and the summation is the same as in Example \ref{example:8}.

For a more challenging example, let $p=7$, $\Bell=(5,9)=(5,2)+(0,1)p$
and $k=8=1+1\cdot p$. Here, we have to consider sums of products of
the form 
\begin{align*}
  \binom{1}{(0,1)-\mathbf{m}}_f\cdot\binom{1}{(5,2)+p\mathbf{m}}_f=f((0,1)-\mathbf{m})\cdot
  f{((5,2)+p\mathbf{m})}. 
\end{align*}
Due to the specification of $f$, the only possible such term ($\mathbf{m}=(0,0)$) leads to the sum value of $0$. Indeed, $\binom{8}{(5,9)}_f=4368=7\cdot 2^4\cdot 39$.  
\end{example}

\section{Congruences and identities for sums of $\binom{k}{\Bell}_f$}\label{sec:congruence2}
In this section, we consider divisibility properties and identities for 
sums of $\binom{k}{\Bell}_f$. 
First, we focus on the number
$c_f(\Bell)=\sum_{k\ge 0}\binom{k}{\Bell}_f$ of vector compositions with 
arbitrary number of parts.
In Theorems \ref{theorem:glashier} and
\ref{theorem:rowsum}, 
we then investigate  
particular divisibility properties 
for 
the total number of all $f$-weighted vector compositions of $\Bell$ 
where $\Bell$ ranges over particular sets  
$L$ and where the number of parts is fixed, that is, 
we evaluate divisibility of $\sum_{\Bell\in L}\binom{k}{\Bell}_f$. 
We also generalize the notion of $s$-color compositions
\cite{Agarwal:2000} in this section and derive a corresponding
identity.   

At first, 
we establish that $c_f(\Bell)$ 
satisfies a weighted linear
recurrence where the weights are given by $f$. 
\begin{theorem}\label{theorem:recurrence}
  For $\Bell\in\nn^N$, ${\Bell} \neq \mathbf{0}$, we have that
  \begin{align*}
    c_f(\Bell) = \sum_{\mathbf{m}\in\nn^N} f(\mathbf{m})c_f({\Bell}-\mathbf{m}),
  \end{align*}
  where we define $c_f(\mathbf{0})=1$ and $c_f(\Bell)=0$ if any component $\ell$ of $\Bell$ is smaller than zero.  
\end{theorem}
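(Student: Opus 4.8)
The plan is to unfold the definition $c_f(\Bell)=\sum_{k\ge 0}\binom{k}{\Bell}_f$ and collapse it onto the one-step Pascal-type recurrence recorded in Remark~\ref{rem:triangle}. For $\Bell\neq\mathbf 0$ we have $\binom{0}{\Bell}_f=0$ by Lemma~\ref{lemma:f0}, so the $k=0$ term drops and $c_f(\Bell)=\sum_{k\ge 1}\binom{k}{\Bell}_f$. Applying $\binom{k}{\Bell}_f=\sum_{\mathbf m\in\nn^N}f(\mathbf m)\binom{k-1}{\Bell-\mathbf m}_f$ to each summand and interchanging the two sums gives $c_f(\Bell)=\sum_{\mathbf m\in\nn^N}f(\mathbf m)\sum_{k\ge 1}\binom{k-1}{\Bell-\mathbf m}_f$; re-indexing $k-1\mapsto j$ identifies the inner sum as $c_f(\Bell-\mathbf m)$, which is exactly the claimed recurrence. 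Terms for which $\Bell-\mathbf m$ has a negative component contribute nothing, since no corresponding vector composition exists (equivalently, power series carry no negative exponents), which is consistent with the stated convention that $c_f$ vanishes on such arguments.

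The only genuine point to address is finiteness, so that the rearrangement of sums is legitimate and $c_f(\Bell)$ is well defined in the first place. Here I would make explicit the hypothesis $f(\mathbf 0)=0$, which is precisely what makes $c_f(\mathbf 0)=\sum_{k\ge 0}f(\mathbf 0)^k$ collapse to the stipulated boundary value $1$. Under $f(\mathbf 0)=0$, every term of $\binom{k}{\Bell}_f=\sum_{\mathbf m_1+\cdots+\mathbf m_k=\Bell}f(\mathbf m_1)\cdots f(\mathbf m_k)$ in which some $\mathbf m_i=\mathbf 0$ vanishes, so only compositions into nonzero parts survive and $\binom{k}{\Bell}_f=0$ as soon as $k>\ell_1+\cdots+\ell_N$. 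Hence $c_f(\Bell)$ is a finite sum, the double sum over $k$ and $\mathbf m$ is finite, and the interchange of summation order is unproblematic.

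An alternative and equivalent route is via the multivariate generating function. Summing $F(\mathbf x;k)$ over $k$ and using Theorem~\ref{theorem:main} gives $\sum_{\Bell}c_f(\Bell)\mathbf x^{\Bell}=\sum_{k\ge 0}\bigl(\sum_{\mathbf s}f(\mathbf s)\mathbf x^{\mathbf s}\bigr)^k=\bigl(1-\sum_{\mathbf s}f(\mathbf s)\mathbf x^{\mathbf s}\bigr)^{-1}$ as a formal power series, which is legitimate precisely because $f(\mathbf 0)=0$ makes $\sum_{\mathbf s}f(\mathbf s)\mathbf x^{\mathbf s}$ have zero constant term. Writing $C(\mathbf x)=\sum_\Bell c_f(\Bell)\mathbf x^\Bell$ and $\Phi(\mathbf x)=\sum_{\mathbf s}f(\mathbf s)\mathbf x^{\mathbf s}$, this reads $C(\mathbf x)=1+\Phi(\mathbf x)C(\mathbf x)$; comparing the coefficient of $\mathbf x^{\Bell}$ for $\Bell\neq\mathbf 0$ yields the recurrence, while the coefficient of $\mathbf x^{\mathbf 0}$ recovers the boundary value $c_f(\mathbf 0)=1$. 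I expect the main (and essentially only) obstacle to be the bookkeeping around $f(\mathbf 0)$ and the well-definedness of $c_f$; the combinatorial content is already contained in the single-step recurrence of Remark~\ref{rem:triangle}.
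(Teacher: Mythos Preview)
Your proposal is correct and is essentially the algebraic unfolding of the paper's one-line combinatorial proof: the paper simply conditions on the last part $\mathbf{m}$ of a composition, which is exactly the Pascal-type step from Remark~\ref{rem:triangle} that you invoke and then sum over $k$. Your explicit discussion of the hypothesis $f(\mathbf{0})=0$ (needed for $c_f$ to be well defined and for the boundary value $c_f(\mathbf{0})=1$ to hold) is a point of rigor the paper leaves implicit.
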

\begin{proof}
  An $f$-weighted vector composition
  $[\mathbf{m}_1,\ldots,\mathbf{m}_{k-1},\mathbf{m}_k]$ of $\Bell$
  ends, in its last 
  part, with exactly one of the values $\mathbf{m}=\mathbf{m}_k\in\nn^N$, and $\mathbf{m}$ may be
  colored in $f(\mathbf{m})$ different colors. 
  Moreover, $[\mathbf{m}_1,\ldots,\mathbf{m}_{k-1}]$ is a vector
  composition of $\Bell-\mathbf{m}$. 
\end{proof}
Before investigating divisibility of $c_f(\Bell)$, we detail special cases of $c_f(\Bell)$ that arise for particular $f$. 
\begin{example}
  When $f_D$ is the indicator function on the set \\
  $D=\set{(0,1),(1,0),(1,1)}$, then $c_{f_D}(\Bell)=c_{f_D}(m,n)$ is the well-known Delannoy sequence \cite{Banderier:2004}, which counts the number of lattice paths from $(0,0)$ to $(m,n)$ with steps in $D$ (i.e., east, north, north-east). 
The underlying lattice paths are of interest in sequence alignment problems in computational biology and computational linguistics.
They also appear in so-called edit distance problems \cite{Levenshtein:1966} in which the minimal number of insertions and deletions is sought that transforms one sequence into another.  
Closed-form expressions for the Delannoy numbers are 
\begin{align*}
  c_{f_D}(m,n) = \sum_{d=0}^m 2^d\binom{m}{d}\binom{n}{d}=\sum_{d=0}^n\binom{n}{d}\binom{m+n-d}{n}.
\end{align*}
The weighted Delannoy numbers \cite{Razpet:2002}, for which $f_{\text{WD}}((1,0))=a$, \\ $f_{\text{WD}}((0,1))=b$ and $f_{\text{WD}}((1,1))=c$, for integers $a,b,c\ge 1$, have closed-form expression
\begin{align*}
  c_{f_{\text{WD}}}(m,n) = a^mb^n\sum_{d\ge 0}\binom{m}{d}\binom{n}{d}\Bigl(\frac{ab+c}{ab}\Bigr)^d. 
\end{align*}
When $f_W$ is the indicator function on the set $W=\set{(1,1),(1,2),(2,1),(2,2)}$, then $c_{f_W}(\Bell)=c_{f_W}(m,n)$ are known as Whitney numbers \cite{Bona:2010}. The diagonals are listed as integer sequence A051286. A closed-form expression can be derived as 
\begin{align*}
  c_{f_W}(m,n) = \sum_{k\ge 0}\binom{m-k}{k}\binom{n-k}{k}.
\end{align*}
The diagonals of $c_{f_M}$, where $M=\set{(1,1),(1,2),(2,1)}$, are listed as integer sequence A098479. The diagonals of $c_{f_R}$, where $R=\set{(x,y)\sd x\ge 1,y\ge 0}$, are listed as integer sequence A047781. The diagonals of $c_{f_A}$, where $A=\set{(x,y,z)\sd 0\le x,y,z\le 1}-\set{\mathbf{0}_3}$, are listed as integer sequence A126086. They appear in alignment problems of multiple (in this case, three) sequences. The case of $c_{f_S}$, for $S=\nn^N-\set{\mathbf{0}}$, 
counts the original vector compositions considered in \cite{Andrews:1975}. A closed-form expression is given by 
\begin{align*}
  c_{f_S}(\ell_1,\ldots,\ell_N)=\sum_{k=0}^{\ell_1+\cdots+\ell_N}\sum_{i=0}^k(-1)^i\binom{k}{i}\prod_{j=1}^N\binom{\ell_j+k-i-1}{\ell_j}.
\end{align*} 
It has been noted that $c_{f_S}(\ell,\ldots,\ell)=2^{\ell-1}c_{f_U}(\ell,\ldots,\ell)$, where \\
 $U=\set{(s_1,\ldots,s_N)\sd s_i\in\set{0,1}}-\set{\mathbf{0}}$ \cite{Duchi:2004}. The latter numbers generalize the Delannoy numbers and admit the closed-form expression \cite{Slowinski:1998} 
\begin{align*}
  c_{f_U}(\ell_1,\ldots,\ell_N) = \sum_{k=\max\set{\ell_1,\ldots,\ell_N}}^{\ell_1+\cdots+\ell_N}\sum_{i=0}^k(-1)^i\binom{k}{i}\prod_{j=1}^N\binom{k-i}{\ell_j}. 
\end{align*}
\end{example}

Next, we generalize the concept of $s$-color compositions for ordinary colored compositions, for which the weighting function is $f(s)=s$ for each part size $s$, to weighted vector compositions. 
Of course, there are many possible extensions of the concept of $s$-color integer compositions to vector compositions. The most natural is  probably the following:
\begin{definition}
  We call an $f$-weighted vector composition of $\Bell$ an \emph{$\mathbf{s}$-color composition} when 
  \begin{align*}
    f(\mathbf{s})=s_1\cdots s_N
  \end{align*}
  for all $\mathbf{s}\in\nn^N$.
\end{definition}
This definition 
inherently captures an independent labeling of the vector components $s_1,\ldots,s_N$ into $s_1$ colors (for component $1$),$\ldots$, $s_N$ colors (for component $N$). It is well-known that ordinary $s$-color compositions \cite{Agarwal:2000} are closely related to ``1-2-color compositions'', that is, integer compositions that only have part sizes in $\set{1,2}$; see, e.g., Shapcott \cite{Shapcott:2013}. The next theorem generalizes this relationship.  
\begin{theorem}\label{theorem:fprod}
  Let $f_{\text{prod}}(\mathbf{s})=s_1\cdots s_N$ for all $\mathbf{s}\in\nn^N$ and let $g$ be the indicator function on $S=\set{(s_1,\ldots,s_N)\sd s_i\in\set{1,2}}$. Then
  \begin{align*}
    c_{f_{\text{prod}}}(\ell\mathbf{1}) = c_{g}((2\ell-1)\mathbf{1}).
  \end{align*}
for all $\ell>0$.
\end{theorem}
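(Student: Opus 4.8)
The plan is to compare the $N$-variate generating functions of the two sides and read off the appropriate diagonal coefficient in each case. First I would record the generating function of $c_f$. Since both $f_{\text{prod}}$ and $g$ vanish at $\mathbf{0}$, the series $P_f(\mathbf{x})=\sum_{\mathbf{s}\in\nn^N}f(\mathbf{s})\mathbf{x}^{\mathbf{s}}$ has zero constant term; moreover, for fixed $\Bell$ a vector composition of $\Bell$ of nonzero weight has no zero parts and hence at most $\ell_1+\cdots+\ell_N$ parts, so $c_f(\Bell)=\sum_{k\ge 0}\binom{k}{\Bell}_f$ is a finite sum. Combining this with Theorem~\ref{theorem:main}, which gives $\sum_{\Bell}\binom{k}{\Bell}_f\mathbf{x}^{\Bell}=P_f(\mathbf{x})^k$, yields the formal power series identity
\begin{align*}
\sum_{\Bell\in\nn^N}c_f(\Bell)\,\mathbf{x}^{\Bell}=\sum_{k\ge 0}P_f(\mathbf{x})^k=\frac{1}{1-P_f(\mathbf{x})}.
\end{align*}
Here $P_{f_{\text{prod}}}(\mathbf{x})=\prod_{j=1}^N\sum_{s\ge 1}s\,x_j^{s}=\prod_{j=1}^N x_j(1-x_j)^{-2}$, while $P_g(\mathbf{y})=\prod_{j=1}^N(y_j+y_j^2)$.

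Next I would expand the geometric series and extract $[\mathbf{x}^{\ell\mathbf{1}}]$ on the $f_{\text{prod}}$ side and $[\mathbf{y}^{(2\ell-1)\mathbf{1}}]$ on the $g$ side. Each summand $\prod_{j}\bigl(x_j(1-x_j)^{-2}\bigr)^{k}=\prod_j x_j^{k}(1-x_j)^{-2k}$ (and likewise $\prod_j y_j^{k}(1+y_j)^{k}$) factors over the $N$ coordinates, so the coefficient extraction splits into $N$ identical one-variable extractions. Using $[x^{n}](1-x)^{-r}=\binom{n+r-1}{r-1}$, and observing that the $k=0$ term contributes nothing when $\ell>0$, this gives
\begin{align*}
c_{f_{\text{prod}}}(\ell\mathbf{1})=\sum_{k=1}^{\ell}\binom{\ell+k-1}{2k-1}^{N},\qquad
c_{g}\bigl((2\ell-1)\mathbf{1}\bigr)=\sum_{k=\ell}^{2\ell-1}\binom{k}{2\ell-1-k}^{N},
\end{align*}
where in each case the displayed range of $k$ is exactly the set of indices for which the binomial coefficient does not vanish.

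Finally I would match the two sums term by term. Writing the summation index in the second sum as $\ell+k-1$ with $k=1,\dots,\ell$ converts it into $\sum_{k=1}^{\ell}\binom{\ell+k-1}{\ell-k}^{N}$, and the complementation identity $\binom{\ell+k-1}{\ell-k}=\binom{\ell+k-1}{(\ell+k-1)-(\ell-k)}=\binom{\ell+k-1}{2k-1}$ shows that the two expressions coincide for every $\ell>0$, which is the assertion. (As a sanity check, for $N=1$ both sides equal $F_{2\ell}$, recovering the classical relation between $s$-color compositions of $\ell$ and $1$-$2$ compositions of $2\ell-1$.) I do not expect a genuine obstacle here: the only points that require care are the bookkeeping that pins down the correct summation ranges and the recognition of the binomial symmetry, the remaining manipulations being routine formal power series computations, legitimate because $P_f$ has no constant term and each $c_f(\Bell)$ is a finite sum.
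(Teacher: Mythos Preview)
Your proof is correct and takes a genuinely different route from the paper. The paper gives a bijective argument: it applies Shapcott's cross-and-dash encoding coordinatewise, turning each row of an $\mathbf{s}$-color composition of $\ell\mathbf{1}$ into a row of a $\{1,2\}$-composition of $2\ell-1$, and checks that this is a bijection. Your argument instead computes the generating function of each side, factors the $k$-th summand over the $N$ coordinates, extracts coefficients to obtain
\[
c_{f_{\text{prod}}}(\ell\mathbf{1})=\sum_{k=1}^{\ell}\binom{\ell+k-1}{2k-1}^{N}
\quad\text{and}\quad
c_{g}\bigl((2\ell-1)\mathbf{1}\bigr)=\sum_{k=1}^{\ell}\binom{\ell+k-1}{\ell-k}^{N},
\]
and matches them via the symmetry $\binom{\ell+k-1}{\ell-k}=\binom{\ell+k-1}{2k-1}$. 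The bijective proof explains \emph{why} the identity holds at the level of individual compositions and imports the $N=1$ result directly; your analytic proof is self-contained, avoids invoking Shapcott's bijection, and as a by-product yields an explicit closed form for both quantities as a sum of $N$-th powers of binomial coefficients.
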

\begin{proof}[Proof sketch]
  Let $(s_1,\ldots,s_N)^{1},\ldots,(s_1,\ldots,s_N)^{s_1\cdots s_N}$ be the $s_1\cdots s_N$ colorations of part size $(s_1,\ldots,s_N)$. We bijectively re-write them to individual components $(s_1^1,\ldots,s_N^1),\ldots,(s_1^{s_1},\ldots,s_N^{s_N})$. Now, when we have a sum $\mathbf{s}^r+\mathbf{t}^q=\ell\mathbf{1}$ (and similarly for more than two terms) this reads in components
\begin{align*}
  \begin{pmatrix}s_1^{r_1}\\ \vdots \\ s_N^{r_N}\end{pmatrix}+
  \begin{pmatrix}t_1^{q_1}\\ \vdots \\ t_N^{q_N}\end{pmatrix}=
  \begin{pmatrix}\ell\\ \vdots \\ \ell\end{pmatrix}
\end{align*}
where $r_1,\ldots,r_N$ and $q_1,\ldots,q_N$ denote the bijective re-writings. Consider this equation in each row, $s_i^{r_i}+t_i^{q_i}=\ell$. Encode the integer composition $(s_i^{r_i},t_i^{q_i})$ of $\ell$ into the ``cross-and-dash representation'' of Shapcott \cite{Shapcott:2013} in which crosses separate parts and a part value of size $\pi$ with color $1\le c\le \pi$ is denoted by $\pi-1$ dashes and one cross in position $c$. Then, as in  Shapcott \cite{Shapcott:2013}, Proposition 2, let crosses stand for 1s and dashes for 2s. This proves the bijection between $f_{\text{prod}}$-weighted compositions and $g$-weighted compositions. 
 
The table below illustrates the $\mathbf{s}$-color compositions of $(3,3)$ (into two parts) and the uniquely corresponding $g$-weighted compositions (into four parts).
\begin{table}[!htb]
  \centering
  \begin{tabular}{|ccc|}\hline
    $\mathbf{s}$-color & cross-and-dash & 1-2 compositions\\ \hline
    $\begin{pmatrix}2^1\\ 2^1\end{pmatrix}+\begin{pmatrix}1^1 \\ 1^1\end{pmatrix}$ & $\begin{pmatrix}\times{-}\times\times\\ \times{-}\times\times\end{pmatrix}$ & $\begin{pmatrix} 1 & 2 & 1 & 1 \\ 1 & 2 & 1 & 1\end{pmatrix}$\\
          $\begin{pmatrix}2^1\\ 2^2\end{pmatrix}+\begin{pmatrix}1^1 \\ 1^1\end{pmatrix}$ & $\begin{pmatrix}\times{-}\times\times\\ {-}\times\times\times\end{pmatrix}$ & $\begin{pmatrix} 1 & 2 & 1 & 1 \\ 2 & 1 & 1 & 1\end{pmatrix}$\\
     $\begin{pmatrix}2^2\\ 2^1\end{pmatrix}+\begin{pmatrix}1^1 \\ 1^1\end{pmatrix}$ & $\begin{pmatrix}{-}\times\times\times\\ \times{-}\times\times\end{pmatrix}$ & $\begin{pmatrix} 2 & 1 & 1 & 1 \\ 1 & 2 & 1 & 1\end{pmatrix}$\\
    $\begin{pmatrix}2^2\\ 2^2\end{pmatrix}+\begin{pmatrix}1^1 \\ 1^1\end{pmatrix}$ & $\begin{pmatrix}{-}\times\times\times\\ {-}\times\times\times\end{pmatrix}$ & $\begin{pmatrix} 2 & 1 & 1 & 1 \\ 2 & 1 & 1 & 1\end{pmatrix}$\\
     $\begin{pmatrix}2^1\\ 1^1\end{pmatrix}+\begin{pmatrix}1^1 \\ 2^1\end{pmatrix}$ & $\begin{pmatrix}\times{-}\times\times\\ \times\times\times{-}\end{pmatrix}$ & $\begin{pmatrix} 1 & 2 & 1 & 1 \\ 1 & 1 & 1 & 2\end{pmatrix}$\\
    $\begin{pmatrix}2^2\\ 1^1\end{pmatrix}+\begin{pmatrix}1^1 \\ 2^1\end{pmatrix}$ & $\begin{pmatrix}{-}\times\times\times\\ \times\times\times{-}\end{pmatrix}$ & $\begin{pmatrix} 2 & 1 & 1 & 1 \\ 1 & 1 & 1 & 2\end{pmatrix}$\\
    $\begin{pmatrix}2^1\\ 1^1\end{pmatrix}+\begin{pmatrix}1^1 \\ 2^2\end{pmatrix}$ & $\begin{pmatrix}\times{-}\times\times\\ \times\times{-}\times\end{pmatrix}$ & $\begin{pmatrix} 1 & 2 & 1 & 1 \\ 1 & 1 & 2 & 1\end{pmatrix}$\\
    $\begin{pmatrix}2^2\\ 1^1\end{pmatrix}+\begin{pmatrix}1^1 \\ 2^2\end{pmatrix}$ & $\begin{pmatrix}{-}\times\times\times\\ \times\times{-}\times\end{pmatrix}$ & $\begin{pmatrix} 2 & 1 & 1 & 1 \\ 1 & 1 & 2 & 1\end{pmatrix}$\\
    \hline
  \end{tabular}
\end{table}
The table omits the further eight cases corresponding to $(1,1)+(2,2)$ and $(1,2)+(2,1)$. 
\end{proof}
\begin{example}
  The number of $f_{\text{prod}}$-weighted vector compositions of $(\ell,\ell)$ are given by the integer sequence
  \begin{align*}
    1,5,26,153,931,5794,36631,234205,\ldots
  \end{align*}
  for $\ell=1,2,3,\ldots$. 
  The number of $g$-weighted vector compositions of $(\ell,\ell)$ are given by integer sequence A051286
  \begin{align*}
    1,2,5,11,26,63,153,376,931,2317,5794,14545,36631,92512,234205,\ldots
  \end{align*}
\end{example}

When $f$ is arbitrary but zero almost everywhere, that is,
$f(\mathbf{x})\neq 0$ for only finitely many $\mathbf{x}$, 
then $c_f(\Bell)$ satisfies a 
linear recurrence by Theorem 
\ref{theorem:recurrence}. 
When $N=1$, that is, vectors $\Bell$ are one-dimensional, then $c_f$ satisfies an $m$-th order linear recurrence of the form
\begin{align*}
  c_f(n+m) = f(1)c_f(n+m-1)+\cdots+f(m)c_f(n) 
\end{align*}
in this situation.
  
For such sequences, Somer \cite{Somer:1987} specifies varying congruence relationships, one of which translates to the following result in our context. 
\begin{theorem}[\cite{Eger:2016}, Theorem 27]\label{theorem:recgeneral}
  Let $p$ be a prime and let $b$ a nonnegative integer. Let
  $f:\nn\goesto\nn$ be zero almost everywhere, i.e., $f(x)=0$ for all
  $x>m$ for some positive $m$. Then
  \begin{align*}
    c_f(n+mp^b)\equiv&
    f(1)c_f(n+(m-1)p^b)+f(2)c_f(n+(m-2)p^b)+\cdots\\
    &+f(m)c_f(n)\pmod{p}. 
  \end{align*}
  \qed
\end{theorem}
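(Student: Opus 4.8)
The plan is to pass to the ordinary generating function $C(x)=\sum_{n\ge 0}c_f(n)x^n$ and turn the assertion into a divisibility statement about polynomials modulo $p$. Since $f$ is zero for all $x>m$, the series $F(x)\defin\sum_{s\ge 0}f(s)x^s$ is a polynomial of degree at most $m$, and Theorem~\ref{theorem:recurrence}, together with the convention $c_f(0)=1$, is exactly the statement that $C(x)\bigl(1-F(x)\bigr)=1$ as formal power series (I take $f(0)=0$, the natural normalisation under which $c_f$ is finite, so that $1-F(x)$ has invertible constant term; with this, $C(x)=\bigl(1-F(x)\bigr)^{-1}$ over $\z$). All coefficients involved are integers, so it is legitimate to reduce this identity modulo $p$.

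Next I would use the Frobenius identity already exploited in the proof of Theorem~\ref{theorem:13}: by Theorem~\ref{theorem:prime1} one has $F(x)^p\equiv F(x^p)\pmod p$, and iterating $b$ times gives $F(x)^{p^b}\equiv F(x^{p^b})\pmod p$, hence $\bigl(1-F(x)\bigr)^{p^b}\equiv 1-F(x^{p^b})\pmod p$. Multiplying $C(x)\bigl(1-F(x)\bigr)=1$ by $\bigl(1-F(x)\bigr)^{p^b-1}$ and reducing modulo $p$ yields
\begin{align*}
  C(x)\bigl(1-F(x^{p^b})\bigr)\equiv\bigl(1-F(x)\bigr)^{p^b-1}\pmod p.
\end{align*}
The point is that the right-hand side is a \emph{polynomial} of degree at most $m(p^b-1)=mp^b-m$, which is strictly less than $mp^b$ because $m\ge 1$.

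It then remains to compare coefficients of $x^N$ on both sides for each $N\ge mp^b$. On the right-hand side this coefficient is $0$; on the left it equals $c_f(N)-\sum_{j=1}^m f(j)\,c_f(N-jp^b)$, where every index $N-jp^b$ is nonnegative since $j\le m$. Hence $c_f(N)\equiv\sum_{j=1}^m f(j)c_f(N-jp^b)\pmod p$ for all $N\ge mp^b$, and substituting $N=n+mp^b$ with $n\ge 0$ is precisely the claimed recurrence.

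I expect no deep obstacle here; the delicate points are purely bookkeeping. One has to be sure the degree bound $mp^b-m<mp^b$ really does dispose of all indices $N\ge mp^b$ (it does, by $m\ge 1$), and, if one insists on allowing $f(0)\ne 0$, that $1-F(x)$ stays invertible modulo $p$, which costs a harmless side condition $f(0)\not\equiv 1\pmod p$ and a global constant factor that does not affect the congruence. For readers who prefer a recurrence-sequence viewpoint, the same fact can be phrased as follows: $(c_f(n))$ satisfies a linear recurrence with characteristic polynomial $x^m-\sum_{j=1}^m f(j)x^{m-j}$, the Frobenius map permutes the roots of this polynomial over $\overline{\mathbb{F}_p}$, and therefore the $p^b$-decimated sequence $\bigl(c_f(n+kp^b)\bigr)_k$ obeys a recurrence with the same characteristic polynomial modulo $p$; the generating-function computation above is just the coefficient-level shadow of this observation.
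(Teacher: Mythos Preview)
Your argument is correct. The generating-function identity $C(x)\bigl(1-F(x)\bigr)=1$ together with the Frobenius congruence $\bigl(1-F(x)\bigr)^{p^b}\equiv 1-F(x^{p^b})\pmod p$ indeed yields $C(x)\bigl(1-F(x^{p^b})\bigr)\equiv\bigl(1-F(x)\bigr)^{p^b-1}\pmod p$, and comparing coefficients of $x^{n+mp^b}$ gives exactly the stated recurrence. The degree bound and the nonnegativity of all indices are handled cleanly. The assumption $f(0)=0$ is implicit in the paper as well: the linear recurrence $c_f(n+m)=f(1)c_f(n+m-1)+\cdots+f(m)c_f(n)$ displayed just before the theorem already presupposes it.

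As for the comparison: the paper does \emph{not} prove this theorem. It is quoted from \cite{Eger:2016} (Theorem~27 there) and marked with \qedsymbol; the surrounding text points to Somer~\cite{Somer:1987} as the source of such congruences for linear recurrence sequences. Somer's framework proceeds via the general arithmetic of $m$-th order linear recurrences over $\mathbb{F}_p$ (characteristic roots, periodicity of residues), essentially the ``recurrence-sequence viewpoint'' you sketch at the end. Your generating-function argument is a direct, self-contained alternative that avoids invoking that machinery and stays entirely within the coefficient-extraction techniques already used elsewhere in the paper (e.g.\ Theorems~\ref{theorem:lucas} and~\ref{theorem:13}). In that sense your route is more in keeping with the paper's own toolkit than the cited reference is.
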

However, when $N>1$, these results are not applicable. 
One possibility would be to project vectors in $\nn^N$ onto $\nn$
via a bijection 
$\tau:\nn\goesto\nn^N$ 
and then define new quantities $\tilde{c}_f$
\begin{align*}
  \tilde{c}_f(n) = c_f(\tau(n))
\end{align*}
for which the findings of \cite{Somer:1987} and others might be applicable. 
The problem with such a specification is that the bijection does not lead, in general, to fixed order linear recurrences because $\tau$ can map different $n,n'$ to `arbitrary' points in $\nn^N$, so that e.g.\ $\tilde{c}_f(100)$ may be a function of $\tilde{c}_f(90)$ and $\tilde{c}_f(80)$, but $\tilde{c}_f(1000)$ may be a function of $\tilde{c}_f(543)$ and $\tilde{c}_f(389)$.  

Another result, for $N=2$, is the following. 
Consider the weighted Delannoy numbers for which $f_{\text{WD}}((1,0))=a$, 
 $f_{\text{WD}}((0,1))=b$ and $f_{\text{WD}}((1,1))=c$ as above. Razpet \cite{Razpet:2002} shows that these numbers satisfy a `Lucas property'.
\begin{theorem}[\cite{Razpet:2002}, Theorem 2]
  Let $p$ be prime, $n\ge 1$, and let integers $a_k,b_k$ satisfy
  \begin{align*}
    0\le a_k,b_k<p,\quad \text{for all } k=0,1,\ldots,n.
  \end{align*}
  Then
  \begin{align*}
  & c_{f_{\text{WD}}}(a_np^n+\cdots+a_1p+a_0\:,\: b_np^n+\cdots+b_1p+b_0)
  \\
  \equiv & c_{f_{\text{WD}}}(a_n,b_n)\cdots c_{f_{\text{WD}}}(a_1,b_1)c_{f_{\text{WD}}}(a_0, b_0)\pmod{p}.
  \end{align*}
  \qed
\end{theorem}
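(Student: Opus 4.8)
The plan is to pass to the bivariate ordinary generating function of $c_{f_{\text{WD}}}$ and argue over $\mathbb{F}_p$ by means of the Frobenius endomorphism, in the spirit of the generating-function proof of the classical Lucas theorem. Write $f=f_{\text{WD}}$ and $C(x,y)=\sum_{i,j\ge 0}c_f(i,j)\,x^iy^j$. Since $f$ is supported on $(1,0),(0,1),(1,1)$ with values $a,b,c$ and $f(\mathbf 0)=0$, Theorem \ref{theorem:recurrence} gives $c_f(i,j)=a\,c_f(i-1,j)+b\,c_f(i,j-1)+c\,c_f(i-1,j-1)$ for $(i,j)\neq(0,0)$ with $c_f(0,0)=1$; multiplying by $x^iy^j$, summing, and using $f(\mathbf 0)=0$ to extend the convolution to all indices yields
\begin{align*}
C(x,y)=\frac{1}{1-ax-by-cxy}.
\end{align*}

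Working in $\mathbb{F}_p[[x,y]]$, the first step is the Frobenius identity $(1-ax-by-cxy)^p\equiv 1-ax^p-by^p-cx^py^p\pmod p$, which follows from $\binom{p}{i}\equiv 0\pmod p$ for $0<i<p$ together with Theorem \ref{theorem:prime1} applied to the one-part generating function $ax+by+cxy$ (or simply from the Frobenius endomorphism of $\mathbb{F}_p[x,y]$ and $a^p\equiv a$, etc.). Since $1/(1-ax^p-by^p-cx^py^p)=C(x^p,y^p)$, writing $C(x,y)=(1-ax-by-cxy)^{p-1}/(1-ax-by-cxy)^p$ and reducing the denominator gives
\begin{align*}
C(x,y)\equiv R(x,y)\,C(x^p,y^p)\pmod p,\qquad R(x,y):=(1-ax-by-cxy)^{p-1}.
\end{align*}
A monomial of $R$ is a scalar times $x^{i+k}y^{j+k}$ with $i+j+k\le p-1$, so $R$ has $x$-degree and $y$-degree at most $p-1$.

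Next I would extract $[x^\alpha y^\beta]$ from both sides, writing $\alpha=a_0+p\alpha'$, $\beta=b_0+p\beta'$ with $0\le a_0,b_0<p$. On the right, $C(x^p,y^p)$ contributes only exponents divisible by $p$, so the exponents taken from $R$ are $\equiv a_0,b_0\pmod p$; lying in $\{0,\dots,p-1\}$, they equal $a_0,b_0$ exactly. Hence $c_f(\alpha,\beta)\equiv\bigl([x^{a_0}y^{b_0}]R\bigr)c_f(\alpha',\beta')\pmod p$. It remains to show $[x^{a_0}y^{b_0}]R\equiv c_f(a_0,b_0)\pmod p$ whenever $a_0,b_0<p$: since $R\cdot(1-ax-by-cxy)=(1-ax-by-cxy)^p\equiv 1-ax^p-by^p-cx^py^p$, the product $R\cdot(1-ax-by-cxy)$ is congruent to $1$ modulo the ideal $(x^p,y^p)$, while $C\cdot(1-ax-by-cxy)=1$ exactly; as $1-ax-by-cxy$ is a unit of $\mathbb{F}_p[[x,y]]/(x^p,y^p)$ (its constant term is $1$), this forces $R\equiv C\pmod{(x^p,y^p)}$, i.e.\ $[x^uy^v]R\equiv c_f(u,v)\pmod p$ for all $0\le u,v<p$. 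Combining, $c_f(\alpha,\beta)\equiv c_f(a_0,b_0)\,c_f(\alpha',\beta')\pmod p$.

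Finally I would iterate: with $\alpha=\sum_{k=0}^n a_kp^k$ and $\beta=\sum_{k=0}^n b_kp^k$ one has $\alpha'=\sum_{k=0}^{n-1}a_{k+1}p^k$ and $\beta'=\sum_{k=0}^{n-1}b_{k+1}p^k$, so induction on $n$ yields $c_f(\alpha,\beta)\equiv\prod_{k=0}^n c_f(a_k,b_k)\pmod p$, which is the assertion. I expect the genuinely delicate point to be the identification $[x^{a_0}y^{b_0}]R\equiv c_f(a_0,b_0)$: the power series $R$ and $C$ are honestly different, and they agree precisely on the monomials $x^uy^v$ with $u,v<p$---exactly those indexed by a single base-$p$ digit---so it is this partial agreement, together with the degree bound on $R$, that makes the digit-by-digit recursion close.
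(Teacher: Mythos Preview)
Your argument is correct. The generating function $C(x,y)=1/(1-ax-by-cxy)$, the Frobenius step $(1-ax-by-cxy)^p\equiv 1-ax^p-by^p-cx^py^p\pmod p$, the resulting factorization $C\equiv R\cdot C(x^p,y^p)$, the degree bound on $R$, and the identification $R\equiv C\pmod{(x^p,y^p)}$ via uniqueness of inverses in $\mathbb{F}_p[[x,y]]/(x^p,y^p)$ all go through as written, and the induction closes the recursion.

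As for comparison: the paper does not give its own proof of this statement. It is quoted as Theorem~2 of Razpet~\cite{Razpet:2002} and closed with a bare \qed. So there is nothing in the paper to compare against; you have supplied a self-contained proof where the paper merely cites one. Your approach is in fact close in spirit to the paper's proof of its Theorem~\ref{theorem:lucas} (Lucas' theorem for $\binom{k}{\Bell}_f$), which likewise manipulates the generating function and applies Frobenius/Theorem~\ref{theorem:prime2}; the extra wrinkle here is that $c_f$ sums over all $k$, so one must invert rather than raise to a fixed power, and your observation that $R=(1-ax-by-cxy)^{p-1}$ agrees with $C$ on the window $\{0,\dots,p-1\}^2$ is exactly what makes the digit recursion work.
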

Finally, we consider the number of $f$-weighted vector compositions, with
fixed number of parts, of \emph{all} vectors $\Bell$ in some particular
sets $L$. Introduce the following notation: 
\begin{align*}
	{k\brack \mathbf{r}}_{\mathbf{m},f} = \sum_{\set{\Bell\in\nn^N\sd\Bell=\mathbf{Am}+\mathbf{r}, \text{ for some } \mathbf{A}\in\mathcal{D}(N)}}\binom{k}{\Bell}_{f},
\end{align*}
where $\mathcal{D}(N)$ is the set of $N\times N$ diagonal matrices with nonnegative integer entries. 
Note that ${k\brack \mathbf{r}}_{\mathbf{m},f}$ generalizes the binomial sum
notation (cf.\ \cite{Sun:2007}). 
By the Vandermonde convolution,
${k\brack \mathbf{r}}_{\mathbf{m},f}$ satisfies  
\begin{equation}
  \label{eq:vand_sum}
  \begin{split}
  {k \brack \mathbf{r}}_{\mathbf{m},f} &= 
  \sum_{\set{\Bell\in\nn^N\sd\Bell=\mathbf{A}\mathbf{m}+\mathbf{r}, \text{ for some } \mathbf{A}\in\mathcal{D}(N)}}\binom{k}{\Bell}_{f} = \sum_{\mathbf{A}\in\mathcal{D}(N)}\binom{k}{\mathbf{A}\mathbf{m}+\mathbf{r}}_f 
\\ 
  &= \sum_{\mathbf{A}\in\mathcal{D}(N)}\sum_{\mathbf{s}\in\nn^N}f(\mathbf{s})\binom{k-1}{\mathbf{A}\mathbf{m}+\mathbf{r}-\mathbf{s}}_f
  \\ &=  \sum_{\mathbf{s}\in\nn^N}f(\mathbf{s})\sum_{\mathbf{A}\in\mathcal{D}(N)}\binom{k-1}{\mathbf{Am}+\mathbf{r}-\mathbf{s}}_f 
 = 
\sum_{\mathbf{s}\in\nn^N} f(\mathbf{s}){k-1\brack \mathbf{r}-\mathbf{s}}_{\mathbf{m},f}.
\end{split}
\end{equation}

Our first theorem in this context goes back to J.\ W.\ L.\ Glaisher, 
and 
its proof is 
inspired by the corresponding proof for binomial sums due to Sun
(cf.\ \cite{Sun:2007}, and references therein). 
\begin{theorem}\label{theorem:glashier}
Let $\mathbf{m}=(m_1,\ldots,m_N)\in\nn^N$. 
For any prime $p\equiv 1\pmod{m_i}$, for all $i=1,\ldots,N$, and any $k\ge 1$, $\mathbf{r}\in\nn^N$,  
\begin{align*}
{k+p-1\brack \mathbf{r}}_{\mathbf{m},f}\equiv {k\brack \mathbf{r}}_{\mathbf{m},f}\pmod{p}.
\end{align*}
\end{theorem}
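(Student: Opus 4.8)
The plan is to reinterpret ${k\brack\mathbf{r}}_{\mathbf{m},f}$ as a coefficient of a power of a fixed element $\mathcal{T}$ of the group algebra $\z[G]$ of the finite abelian group $G=\z/m_1\z\times\cdots\times\z/m_N\z$, and then to check the single identity $\mathcal{T}^{p}\equiv\mathcal{T}\pmod p$, which is a one-line Frobenius computation once $p\equiv 1\pmod{m_i}$ is invoked. Throughout, I assume $f$ is such that the partial sums $\sum_{\mathbf{s}\equiv\mathbf{h}\ (\mathbf{m})}f(\mathbf{s})$ are finite (e.g.\ $f$ of finite support), so that everything below is well defined, and---as is standard for such binomial sums (cf.\ \cite{Sun:2007})---I take the residues $0\le r_i<m_i$, so that ${k\brack\mathbf{r}}_{\mathbf{m},f}$ is precisely the sum of $\binom{k}{\Bell}_{f}$ over all $\Bell\in\nn^N$ lying in the residue class $\mathbf{r}$ modulo $\mathbf{m}$.

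First I would set up the transfer operator. For $\mathbf{r}\in\z^N$ put $S_k(\mathbf{r})=\sum_{\Bell\in\nn^N,\ \ell_i\equiv r_i\,(m_i)}\binom{k}{\Bell}_{f}$; this depends on $\mathbf{r}$ only modulo $\mathbf{m}$, hence is a function $S_k\colon G\to\z$, and $S_k(\mathbf{r})={k\brack\mathbf{r}}_{\mathbf{m},f}$ for reduced $\mathbf{r}$. Applying the Pascal-type recurrence $\binom{k}{\Bell}_{f}=\sum_{\mathbf{s}\in\nn^N}f(\mathbf{s})\binom{k-1}{\Bell-\mathbf{s}}_{f}$ of Remark~\ref{rem:triangle} (terms with a negative component vanish) and re-indexing $\Bell'=\Bell-\mathbf{s}$ shows $S_k=\mathcal{T}S_{k-1}$, where $\mathcal{T}$ is the convolution operator $(\mathcal{T}g)(\mathbf{r})=\sum_{\mathbf{h}\in G}c_{\mathbf{h}}\,g(\mathbf{r}-\mathbf{h})$ with $c_{\mathbf{h}}=\sum_{\mathbf{s}\in\nn^N,\ \mathbf{s}\equiv\mathbf{h}\,(\mathbf{m})}f(\mathbf{s})$. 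Identifying $\z^{G}$ with $\z[G]$ (the basis vector $\delta_{\mathbf{h}}$ corresponding to $[\mathbf{h}]$, and convolution to multiplication), the operator $\mathcal{T}$ is multiplication by the element $\mathcal{T}=\sum_{\mathbf{h}\in G}c_{\mathbf{h}}[\mathbf{h}]\in\z[G]$. Since $\binom{0}{\Bell}_{f}$ is $1$ for $\Bell=\mathbf{0}$ and $0$ otherwise (Lemma~\ref{lemma:f0}), $S_0=\delta_{\mathbf{0}}$ is the unit of $\z[G]$, so $S_k=\mathcal{T}^{k}$; equivalently, ${k\brack\mathbf{r}}_{\mathbf{m},f}$ is the coefficient of $[\mathbf{r}]$ in $\mathcal{T}^{k}$.

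The theorem now reduces to $\mathcal{T}^{p}\equiv\mathcal{T}\pmod p$ in $\z[G]$: granting this, for $k\ge 1$ we get $\mathcal{T}^{k+p-1}=\mathcal{T}^{k-1}\mathcal{T}^{p}\equiv\mathcal{T}^{k-1}\mathcal{T}=\mathcal{T}^{k}\pmod p$ coefficientwise, and reading off the coefficient of $[\mathbf{r}]$ gives ${k+p-1\brack\mathbf{r}}_{\mathbf{m},f}\equiv{k\brack\mathbf{r}}_{\mathbf{m},f}\pmod p$. For the reduced identity, work in $\mathbb{F}_p[G]$, a commutative ring of characteristic $p$: the Frobenius endomorphism gives $\bigl(\sum_{\mathbf{h}}c_{\mathbf{h}}[\mathbf{h}]\bigr)^{p}=\sum_{\mathbf{h}}c_{\mathbf{h}}^{p}[\mathbf{h}]^{p}$; Fermat's little theorem gives $c_{\mathbf{h}}^{p}\equiv c_{\mathbf{h}}\pmod p$; and $[\mathbf{h}]^{p}=[p\mathbf{h}]=[\mathbf{h}]$, since $p\equiv 1\pmod{m_i}$ forces $p\mathbf{h}=\mathbf{h}$ in $G$ componentwise. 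Hence $\mathcal{T}^{p}=\mathcal{T}$ in $\mathbb{F}_p[G]$, which is the desired congruence.

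I do not expect a genuine obstacle, only bookkeeping: one must check that the Pascal recurrence of Remark~\ref{rem:triangle} descends correctly to the quotient $G$ (the re-indexing $\Bell\mapsto\Bell-\mathbf{s}$ together with the vanishing of out-of-range binomials), and one must keep the normalization $0\le r_i<m_i$ so that ${k\brack\mathbf{r}}_{\mathbf{m},f}$ is genuinely a full residue-class sum; the finiteness hypothesis on $f$ is what makes $\mathcal{T}\in\z[G]$ meaningful. If one prefers to avoid group algebras, the same computation runs through a roots-of-unity filter: with $F(\mathbf{x})=\sum_{\mathbf{s}}f(\mathbf{s})\mathbf{x}^{\mathbf{s}}$ one has ${k\brack\mathbf{r}}_{\mathbf{m},f}=\lvert G\rvert^{-1}\sum_{\chi}\overline{\chi(\mathbf{r})}\,F(\boldsymbol{\zeta}_{\chi})^{k}$, summed over the characters $\chi$ of $G$ with $\boldsymbol{\zeta}_{\chi}$ the associated tuple of $m_i$-th roots of unity, and $F(\boldsymbol{\zeta})^{p}\equiv F(\boldsymbol{\zeta}^{p})=F(\boldsymbol{\zeta})\pmod p$ by the same Frobenius/Fermat argument (using $\boldsymbol{\zeta}^{p}=\boldsymbol{\zeta}$); here $\lvert G\rvert=m_1\cdots m_N$ is prime to $p$ because each $m_i\mid p-1$, so dividing by $\lvert G\rvert$ is harmless modulo $p$.
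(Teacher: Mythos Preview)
Your proof is correct. Structurally it shares the paper's skeleton---both arguments reduce to the base case ${p\brack\mathbf{r}}_{\mathbf{m},f}\equiv{1\brack\mathbf{r}}_{\mathbf{m},f}\pmod p$ and then propagate via the recurrence of Remark~\ref{rem:triangle} (the paper's \eqref{eq:vand_sum})---but the execution is genuinely different. The paper handles the base case externally: it invokes Theorem~\ref{theorem:prime1} to kill the $\binom{p}{\Bell}_f$ with $p\nmid\Bell$, then uses $p\equiv 1\pmod{m_i}$ to check that the surviving indices $\mathbf{q}$ stay in the residue class $\mathbf{r}$, and finally cites Theorem~\ref{theorem:special} to replace $\binom{p}{p\mathbf{q}}_f$ by $\binom{1}{\mathbf{q}}_f$. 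You instead pass to the group algebra $\z[G]$ with $G=\prod_i\z/m_i\z$, identify $S_k=\mathcal{T}^k$, and get the base case in one stroke from the Frobenius endomorphism in $\mathbb{F}_p[G]$ together with $[\mathbf{h}]^p=[p\mathbf{h}]=[\mathbf{h}]$. In effect your Frobenius step absorbs both Theorem~\ref{theorem:prime1} and the residue-class bookkeeping into a single algebraic identity, so your argument is self-contained and does not lean on the earlier congruence theorems, at the mild cost of the finiteness hypothesis on $f$ (needed so that $c_{\mathbf{h}}\in\z$) and the normalization $0\le r_i<m_i$ (which the paper also tacitly needs for its $\mathbf{A}\in\mathcal{D}(N)$ parametrization to cover the full residue class). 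The roots-of-unity variant you sketch at the end is the character-theoretic dual of the same computation and would also go through.
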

\begin{proof}
For $k=1$, 
\begin{align*}
{p \brack \mathbf{r}}_{\mathbf{m},f} &= \sum_{\set{\Bell\in\nn^N\sd\Bell=\mathbf{Am}+\mathbf{r}}} \binom{p}{\Bell}_f\\ &\equiv \sum_{\set{\Bell\in\nn^N\sd \Bell=\mathbf{Am}+\mathbf{r}=p\mathbf{q}, \text{ for some } \mathbf{q}}}\binom{p}{\Bell}_f\pmod{p}. 
\end{align*}
by Theorem \ref{theorem:prime1}. Now, in components, the equation $\mathbf{Am}+\mathbf{r}=p\mathbf{q}$ means that $a_{ii}m_i+r_i=pq_i$. Since $p\equiv 1\pmod{m_i}$, we have $q_i\equiv r_i\pmod{m_i}$, i.e., $q_i = c_im_i+r_i$. In vector notation this means $\mathbf{q}=\mathbf{Cm}+\mathbf{r}$ for the diagonal matrix $\mathbf{C}$ with entries $C_{ii}=c_i$. Therefore,
\begin{align*}
{p \brack \mathbf{r}}_{\mathbf{m},f} \equiv \sum_{\set{\mathbf{q}\in\nn^N\sd \mathbf{q}=\mathbf{Cm}+\mathbf{r}}} \binom{p}{p\mathbf{q}}_f\equiv 
\sum_{\set{\mathbf{q}\in\nn^N\sd \mathbf{q}=\mathbf{Cm}+\mathbf{r}}} \binom{1}{\mathbf{q}}\pmod{p} 
\end{align*}
using Theorem \ref{theorem:special}. 
The RHS is 
${1 \brack \mathbf{r}}_{\mathbf{m},f}$. 
For $k>1$, the result follows by induction using \eqref{eq:vand_sum}. 
\end{proof}
\begin{example}
  Let $f$ be the indicator function on the set $\set{(0,1),(1,1),(1,1)}$. Let $p=5$, $k=2$, $\mathbf{m}=(4,1)$ and $\mathbf{r}=(1,0)$. To evaluate ${k \brack \mathbf{r}}_{\mathbf{m},f}$, we consider all matrices $\mathbf{A}\in\mathcal{D}(2)$ and all corresponding sums $\mathbf{Am}+\mathbf{r}$. Since it is impossible to write $m_1=4$ (or larger) as the sum of $k=2$ numbers in $\set{0,1}$, $a_{11}$ must be zero. The only suitable matrices are then 
\begin{align*}
  \mathbf{A}_0=\begin{pmatrix} 0 & 0 \\ 0 & 1\end{pmatrix},\quad
  \mathbf{A}_1=\begin{pmatrix} 0 & 0 \\ 0 & 2\end{pmatrix}.
\end{align*}
The corresponding values $\mathbf{Am}+\mathbf{r}$ are
\begin{align*}
  \Bell_0 = (1,1),\quad \Bell_1 = (1,2).
\end{align*}
We easily find that $\binom{k}{\Bell_0}_f=\binom{k}{\Bell_1}_f=2$ and therefore ${k \brack \mathbf{r}}_{\mathbf{m},f}=4$. Similarly, for ${k+p-1 \brack \mathbf{r}}_{\mathbf{m},f}$, we have to evaluate matrices
\begin{align*}
  \mathbf{A}_{n,0}=\begin{pmatrix} 0 & 0 \\ 0 & n\end{pmatrix},\quad\quad\text{and},\quad 
  \mathbf{A}_{n,1}=\begin{pmatrix} 1 & 0 \\ 0 & n\end{pmatrix}.
\end{align*}
and correspondingly
\begin{align*}
  \binom{6}{(1,n)}_f,\quad\text{and},\quad\binom{6}{(5,n)}_f
\end{align*}
for $n=0,\ldots,6$. Summing up yields ${k+p-1 \brack \mathbf{r}}_{\mathbf{m},f}=204$, which is indeed $\equiv 4\pmod{5}$. 
\end{example}

\begin{theorem}\label{theorem:rowsum}
Let $f(\mathbf{s})={0}$ for almost all $\mathbf{s}\in\nn^N$. 
Consider ${k\brack \mathbf{0}}_{\mathbf{1},f}$, the row sum in row $k\ge 0$, or,
equivalently, the 
total number of $f$-weighted vector 
                  compositions with $k$ parts.   
Let $M=\sum_{\mathbf{s}\in\nn^N}f(\mathbf{s})$. Then
\begin{align*}
  {k\brack \mathbf{0}}_{\mathbf{1},f} = M^k. 
\end{align*}
for all $k>0$. 
This implies the congruences
\begin{align*}
  {k\brack \mathbf{0}}_{\mathbf{1},f} \equiv M\pmod{2},\quad\text{and},\quad{k\brack \mathbf{0}}_{\mathbf{1},f} \equiv M^{a_0+\cdots+a_r}\pmod{p},
\end{align*}
for any prime $p$ by Fermat's little theorem, where $k=a_0+\cdots+a_rp^r$, with $0\le a_i<p$ for all $i=0,\ldots,r$. 
\end{theorem}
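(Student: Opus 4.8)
The plan is to collapse the multi-indexed sum defining ${k\brack \mathbf{0}}_{\mathbf{1},f}$ into a single $k$-th power. First I would unwind the notation: specializing $\mathbf{m}=\mathbf{1}$ and $\mathbf{r}=\mathbf{0}$, a vector of the form $\mathbf{A}\mathbf{1}+\mathbf{0}$ with $\mathbf{A}=\diag(a_{11},\ldots,a_{NN})\in\mathcal{D}(N)$ is just $(a_{11},\ldots,a_{NN})$, and as $\mathbf{A}$ ranges over $\mathcal{D}(N)$ this runs bijectively over all of $\nn^N$. Hence
\[
{k\brack \mathbf{0}}_{\mathbf{1},f}=\sum_{\Bell\in\nn^N}\binom{k}{\Bell}_f,
\]
which is precisely the row sum in row $k$ announced in the statement. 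The hypothesis that $f$ vanishes outside a finite set makes this, and every sum below, a finite sum, so no convergence or rearrangement issue arises.

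Next I would evaluate that row sum in one step. Using the combinatorial representation \eqref{eq:repr1},
\begin{align*}
\sum_{\Bell\in\nn^N}\binom{k}{\Bell}_f
&=\sum_{\Bell\in\nn^N}\ \sum_{\mathbf{m}_1+\cdots+\mathbf{m}_k=\Bell}f(\mathbf{m}_1)\cdots f(\mathbf{m}_k)\\
&=\sum_{\mathbf{m}_1,\ldots,\mathbf{m}_k\in\nn^N}f(\mathbf{m}_1)\cdots f(\mathbf{m}_k)
=\Bigl(\sum_{\mathbf{m}\in\nn^N}f(\mathbf{m})\Bigr)^k=M^k,
\end{align*}
since every $k$-tuple $(\mathbf{m}_1,\ldots,\mathbf{m}_k)$ has a unique sum $\Bell$. (Equivalently, by Theorem \ref{theorem:main}, $\binom{k}{\Bell}_f=[\mathbf{x}^{\Bell}]F(\mathbf{x};k)$ for $F$ as in \eqref{eq:power}, so summing all coefficients amounts to evaluating $F$ at $\mathbf{x}=\mathbf{1}$, giving $F(\mathbf{1};k)=\bigl(\sum_{\mathbf{s}}f(\mathbf{s})\bigr)^k=M^k$.) Either route yields ${k\brack \mathbf{0}}_{\mathbf{1},f}=M^k$ for all $k>0$.

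Finally I would read the two stated congruences off this closed form. Modulo $2$ one has $M^k\equiv M\pmod 2$ for every $k\ge 1$, since both sides are even when $M$ is even and odd when $M$ is odd. Modulo a prime $p$, iterating Fermat's little theorem gives $M^{p^j}\equiv M\pmod p$ for all $j\ge 0$; writing $k=a_0+a_1p+\cdots+a_rp^r$ with $0\le a_i<p$ then yields
\[
M^k=\prod_{j=0}^r\bigl(M^{p^j}\bigr)^{a_j}\equiv\prod_{j=0}^r M^{a_j}=M^{a_0+\cdots+a_r}\pmod p,
\]
which is the second asserted congruence.

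I do not expect a real obstacle: the content of the theorem is that a total count over all target vectors factors through the generating function evaluated at $\mathbf{1}$. The only points needing a word of care are the legitimacy of interchanging the (finite, by the almost-everywhere-zero hypothesis) summations, and the elementary base-$p$ bookkeeping in the last step.
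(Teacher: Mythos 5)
Your proof is correct and follows essentially the same route as the paper, whose entire argument is to plug $\mathbf{x}=\mathbf{1}$ into the generating-function identity $\bigl(\sum_{\mathbf{s}}f(\mathbf{s})\mathbf{x}^{\mathbf{s}}\bigr)^k=\sum_{\Bell}\binom{k}{\Bell}_f\mathbf{x}^{\Bell}$ --- precisely the parenthetical alternative you give, and your direct interchange of finite sums is the same computation made explicit. You additionally spell out the unwinding of the bracket notation and the Fermat bookkeeping for the congruences, which the paper leaves to the reader; all of that checks out.
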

\begin{proof}
  Consider the equation $(\sum_{\mathbf{s}\in\nn^N}f(\mathbf{s})\mathbf{x}^{\mathbf{s}})^k=\sum_{\Bell\in\nn^N}\binom{k}{\Bell}_{f}\mathbf{x}^{\Bell}$. 
  Plug in $\mathbf{x}=\mathbf{1}\in\nn^N$.
\end{proof}
\begin{remark}
  Note that the previous theorem generalizes the fact that the number
  of odd entries in row $k$ in Pascal's triangle is a multiple of $2$.
\end{remark}
\begin{example}\label{example:sum}
  When $f$ is the indicator function on $\set{(0,1),(1,1),(1,1)}$ 
  then $M=3$ and so the row sum in row $k>0$ is $3^k$ and thus always odd. To illustrate, for $k=1$, we have $\binom{k}{(0,1)}_f=\binom{k}{(1,1)}_f=\binom{k}{(1,1)}_f=1$, so their sum is $3$. For $k=2$, we have to consider all $\Bell=(x,y)$ with $x,y\le 2$. We find for all $\Bell$ such that 
$\binom{2}{\Bell}_f$ is non-zero:
  \begin{align*}
    &\binom{k}{(1,1)}_f = 2,\:\: \binom{k}{(2,0)}_f = \binom{k}{(0,2)}_f = 1,\:\: \binom{k}{(1,2)}_f = \binom{k}{(2,1)}_f = 2,
   \\ &\binom{k}{(2,2)}_f = 1.
  \end{align*}
  Hence, their sum is indeed $9$.
\end{example}

\section{Asymptotics of $c_f(\Bell)$}\label{sec:asymp}
We can find asymptotics of $c_f(\Bell)$ by looking at its multivariate generating function 
\begin{align*}
  F(\mathbf{x})=\sum_{\Bell\in\nn^N}c_f(\Bell)\mathbf{x}^{\Bell}=\sum_{k\ge 0}\left(\sum_{\mathbf{s}\in\nn^N}f(\mathbf{s})\mathbf{x}^{\mathbf{s}}\right)^k = \frac{1}{1-\sum_{\mathbf{s}\in\nn^N}f(\mathbf{s})\mathbf{x}^{\mathbf{s}}}. 
\end{align*}
While methods for determining the asymptotic growth of the coefficients of a generating function in one variable are well-established \cite{Flajolet:2009}, 
methods for generating functions of several variables are less ubiquitous.
However, \cite{Raichev:2007} and \cite{Pemantle:2008} discuss such cases. Particularly simple results obtain when $J(\mathbf{x}):=1-\sum_{\mathbf{s}\in\nn^N}f(\mathbf{s})\mathbf{x}^{\mathbf{s}}$ is symmetric in $\mathbf{x}$. 

For instance, \cite{Raichev:2007} discuss the case when $f$ is the indicator function on $\set{(1,0),(0,1),(1,1)}$, so that $J(x,y)=1-x-y-xy$. They determine the set of ``critical points'', that is, the points $(x_0,y_0)$ that satisfy $J(x_0,y_0)=0$ and $x_0\frac{\partial J(x_0,y_0)}{\partial x}=y_0\frac{\partial J(x_0,y_0)}{\partial x}$ in the positive orthant. They find that $(x_0,y_0)=(L-1,L-1)$, where $L=\sqrt{2}$ is the only solution, from which follows the asymptotic
\begin{align*}
  c_f(\ell,\ell)\sim x_0^{-\ell}y_0^{-\ell}\sqrt{\frac{1}{L(2-L)^22\pi\ell}}
\end{align*} 
using their Theorems 3.2 and 3.3. More general cases such as when $f$ is the indicator function on $\set{\mathbf{x}\neq\mathbf{0}\in\nn^N\sd x_i\in\set{0,1}}$ or on $\set{\mathbf{x}\in\nn^N\sd x_i\in\set{1,2}}$ can be solved analogously, but require more work to find the critical points and the implied asymptotics.  

\begin{theorem}[\cite{Griggs:1990}, Theorem 2]
  Let $f$ be the indicator function on $S=\set{(s_1,\ldots,s_N)\sd s_i\in\set{0,1}}-\set{\mathbf{0}}$. Then
  \begin{align*}
    c_{f}(\ell,\ldots,\ell) \sim (2^{1/N}-1)^{-N\ell}\frac{1}{(2^{1/N}-1)2^{(N^2-1)/2N}\sqrt{N(\pi\ell)^{N-1}}}.
  \end{align*}
  \qed
\end{theorem}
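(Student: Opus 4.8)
The plan is to collapse the $N$-dimensional problem to a one-dimensional Laplace-method computation, using the full symmetry of $f$. Since $\sum_{\mathbf{s}\in S}\mathbf{x}^{\mathbf{s}}=\prod_{j=1}^N(1+x_j)-1$, the generating function of $c_f$ from Section~\ref{sec:asymp} is $F(\mathbf{x})=\bigl(2-\prod_{j=1}^N(1+x_j)\bigr)^{-1}$. On a small polydisc we have $\bigl\lvert\prod_j(1+x_j)/2\bigr\rvert<1$, so $F(\mathbf{x})=\sum_{k\ge 0}2^{-k-1}\prod_{j=1}^N(1+x_j)^k$ converges absolutely; extracting the coefficient of $\mathbf{x}^{\ell\mathbf{1}}$ and using $[x^{\ell}](1+x)^k=\binom{k}{\ell}$ in each coordinate gives the key identity
\begin{align*}
  c_f(\ell,\ldots,\ell)=\frac{1}{2}\sum_{k\ge 0}\binom{k}{\ell}^N 2^{-k}.
\end{align*}
(For $N=1$ this is the constant sequence $1$; for $N=2$ it is the Delannoy sequence, and the asymptotic below will specialize to the one recalled in the Raichev--Wilson example above, with $L=\sqrt2$.)

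Next I would apply Laplace's method to this sum. Writing $k=c\ell$ and using Stirling's formula, $\binom{c\ell}{\ell}^N 2^{-c\ell}=\bigl(\tfrac{c}{2\pi\ell(c-1)}\bigr)^{N/2}e^{\ell\phi(c)}(1+o(1))$, uniformly for $c$ in a fixed neighbourhood of the maximizer, with $\phi(c)=N\bigl(c\log c-(c-1)\log(c-1)\bigr)-c\log 2$. Solving $\phi'(c)=N\log\tfrac{c}{c-1}-\log 2=0$ gives $\bigl(\tfrac{c}{c-1}\bigr)^N=2$, i.e.\ the unique critical point $c^{\ast}=\tfrac{L}{L-1}$ with $L=2^{1/N}$; then $\phi(c^{\ast})=-N\log(L-1)=\log\,(2^{1/N}-1)^{-N}$ (using $L^N=2$) and $\phi''(c^{\ast})=-N/\bigl(c^{\ast}(c^{\ast}-1)\bigr)=-N(L-1)^2/L<0$, so the maximum is nondegenerate. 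Approximating $\sum_k$ by $\ell\int\bigl(\tfrac{c}{2\pi\ell(c-1)}\bigr)^{N/2}e^{\ell\phi(c)}\,dc$ and evaluating the Gaussian integral (using $c^{\ast}/(c^{\ast}-1)=L$) yields
\begin{align*}
  \sum_{k\ge 0}\binom{k}{\ell}^N 2^{-k}\sim (2^{1/N}-1)^{-N\ell}\;\ell^{(1-N)/2}\;\Bigl(\tfrac{L}{2\pi}\Bigr)^{N/2}\sqrt{\tfrac{2\pi L}{N(L-1)^2}}.
\end{align*}
Multiplying by $\tfrac12$ and substituting $L=2^{1/N}$, the powers of $2$ collect as $2^{(N+1)/(2N)}/\bigl(2\cdot 2^{(N-1)/2}\bigr)=2^{-(N^2-1)/(2N)}$ and the powers of $\pi$ and $\ell$ as $(\pi\ell)^{-(N-1)/2}$, which is exactly $1/\bigl((2^{1/N}-1)\,2^{(N^2-1)/2N}\sqrt{N(\pi\ell)^{N-1}}\bigr)$, as claimed.

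The Stirling expansion and the constant bookkeeping are routine; the one step requiring genuine care is making the Laplace approximation of the \emph{sum} (rather than an integral) rigorous: I would prove the Stirling estimate uniformly for $c\in[c^{\ast}-\delta,c^{\ast}+\delta]$, bound the tails $k\notin[(c^{\ast}-\delta)\ell,(c^{\ast}+\delta)\ell]$ using the strict concavity of $\phi$ together with crude ratio bounds on $\binom{k}{\ell}^N2^{-k}$, and pass from the sum to the integral via Euler--Maclaurin (or monotonicity of the summand on each side of the peak), with error of smaller order. An alternative route, suggested by the paper's Delannoy example, is multivariate singularity analysis: $F=1/J$ with $J(\mathbf{x})=2-\prod_j(1+x_j)$ has a smooth, strictly minimal critical point at $\mathbf{x}^{\ast}=(2^{1/N}-1)\mathbf{1}$ in the diagonal direction, and the smooth-point transfer theorem of Pemantle--Wilson / Raichev--Wilson yields the same formula, the relevant $(N-1)\times(N-1)$ Hessian being a rank-one perturbation $aI+bJ_{N-1}$ with determinant $a^{N-2}(a+(N-1)b)$ supplying the $\sqrt N$ factor; I would present the one-dimensional argument as primary and use this as a cross-check.
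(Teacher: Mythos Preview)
The paper does not actually prove this theorem: it is stated with a terminal \qed and attributed to Griggs~\cite{Griggs:1990}, so there is no ``paper's own proof'' to compare against. The surrounding text in Section~\ref{sec:asymp} only sketches the multivariate singularity-analysis framework (critical points of $J(\mathbf{x})$ in the sense of Raichev--Wilson / Pemantle--Wilson), which is precisely the alternative route you mention at the end.

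Your primary argument is correct and in fact closer in spirit to Griggs's original proof than to the paper's surrounding discussion. The key identity $c_f(\ell\mathbf{1})=\tfrac12\sum_{k\ge0}\binom{k}{\ell}^N2^{-k}$ is right (it exploits that $\sum_{\mathbf{s}\in S}\mathbf{x}^{\mathbf{s}}=\prod_j(1+x_j)-1$ factors, which the Pemantle--Wilson route does not use so directly), and your Laplace computation checks out: $c^{\ast}=L/(L-1)$, $\phi(c^{\ast})=-N\log(L-1)$, $\phi''(c^{\ast})=-N(L-1)^2/L$, and the exponent bookkeeping $-1+\tfrac{1-N}{2}+\tfrac{N+1}{2N}=-\tfrac{N^2-1}{2N}$ all come out as stated. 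What your approach buys is a genuinely one-dimensional problem with an explicit summand, so the rigour (uniform Stirling, tail bounds, Euler--Maclaurin) is classical; what the multivariate route buys is that it applies unchanged when $f$ lacks the product structure (e.g.\ the Whitney or $\{1,2\}^N$ cases in the paper), at the cost of a heavier Hessian computation. Your closing remark that the smooth-point Hessian is a rank-one perturbation $aI+bJ_{N-1}$ with determinant $a^{N-2}(a+(N-1)b)$ is the right observation for making the two answers match.
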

\begin{theorem}[\cite{Eger:2015}, Theorem 4]
  Let $f$ be the indicator function on $S=\set{(s_1,\ldots,s_N)\sd s_i\in\set{1,2}}$. 
  Moreover, let $\phi=\frac{\sqrt{5}-1}{2}$ and let 
  $A=-\phi^{N-1}(1+\phi)^{N-1}(1+2\phi)$. 
  Define $h
  =N\Bigl(\frac{\phi}{1+3\phi+2\phi^2}\Bigr)^{N-1}$ and 
  $b_0=\frac{1}{-\phi A \sqrt{(2\pi)^{N-1}h}}$. Then
  \begin{align*}
    c_f(\ell,\ldots,\ell) \sim \phi^{-\ell N}b_0{\ell}^{(1-N)/2}.
  \end{align*}
  \qed
\end{theorem}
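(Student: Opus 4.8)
The plan is to collapse the diagonal coefficient to a one--dimensional sum and then estimate it by Laplace's method. Since $f$ is the indicator function on $\set{(s_1,\ldots,s_N)\sd s_i\in\set{1,2}}$, we have $\sum_{\mathbf{s}\in\nn^N}f(\mathbf{s})\mathbf{x}^{\mathbf{s}}=\prod_{i=1}^N(x_i+x_i^2)$, so by the discussion opening this section $F(\mathbf{x})=\bigl(1-\prod_{i=1}^N(x_i+x_i^2)\bigr)^{-1}$ is the generating function of $c_f$. Expanding $F=\sum_{k\ge 0}\bigl(\prod_i(x_i+x_i^2)\bigr)^{k}$ and extracting $[\mathbf{x}^{\ell\mathbf{1}}]$ term by term, using $[x^{\ell}](x+x^2)^{k}=[x^{\ell-k}](1+x)^{k}=\binom{k}{\ell-k}$, gives the closed form
\begin{align*}
  c_f(\ell,\ldots,\ell)=\sum_{k\ge 0}\binom{k}{\ell-k}^{N},
\end{align*}
the summand being nonzero only for $\ell/2\le k\le\ell$.

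First I would identify the dominant term. Writing $k=x\ell$ and applying Stirling's formula, $\binom{x\ell}{(1-x)\ell}\sim(2\pi\ell)^{-1/2}\bigl(\tfrac{x}{(1-x)(2x-1)}\bigr)^{1/2}e^{\ell\Phi(x)}$ with $\Phi(x)=x\log x-(1-x)\log(1-x)-(2x-1)\log(2x-1)$. The stationary--point equation $\Phi'(x)=0$ becomes $x(1-x)=(2x-1)^{2}$, i.e.\ $5x^2-5x+1=0$, whose root in $(\tfrac12,1)$ is $x^{*}=\tfrac{5+\sqrt5}{10}$. With $\phi=\tfrac{\sqrt5-1}{2}$, so that $\phi^{2}=1-\phi$ and $\phi(1+\phi)=1$, a short computation yields $e^{\Phi(x^{*})}=\phi^{-1}$, $\tfrac{x^{*}}{(1-x^{*})(2x^{*}-1)}=\tfrac{(1+\phi)(1+2\phi)}{\phi}$, and $\lvert\Phi''(x^{*})\rvert=(2-\phi)^{2}\,\tfrac{(1+\phi)(1+2\phi)}{\phi}$.

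Then I would run Laplace's method on $c_f(\ell,\ldots,\ell)=\sum_{k}(2\pi\ell)^{-N/2}\bigl(\tfrac{x}{(1-x)(2x-1)}\bigr)^{N/2}e^{N\ell\Phi(x)}(1+o(1))$: the exponent $N\ell\Phi$ attains its maximum at the same $x^{*}$ with second derivative $N\ell\Phi''(x^{*})$, so approximating $\sum_k$ by $\int\ell\,dx$ and evaluating the Gaussian integral gives
\begin{align*}
  c_f(\ell,\ldots,\ell)\sim \phi^{-N\ell}\,\ell^{(1-N)/2}\,\frac{1}{(2\pi)^{(N-1)/2}}\Bigl(\tfrac{(1+\phi)(1+2\phi)}{\phi}\Bigr)^{N/2}\frac{1}{\sqrt{N\,\lvert\Phi''(x^{*})\rvert}}.
\end{align*}
Substituting the value of $\lvert\Phi''(x^{*})\rvert$ collapses the constant to $\bigl((2-\phi)\sqrt{(2\pi)^{N-1}h}\bigr)^{-1}$ with $h=N\bigl(\tfrac{\phi}{(1+\phi)(1+2\phi)}\bigr)^{N-1}$, and since $-\phi A=\phi(1+2\phi)\bigl(\phi(1+\phi)\bigr)^{N-1}=\phi(1+2\phi)=2-\phi$ this constant is exactly $b_0$, which is the claim.

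The step I expect to be the main obstacle is making the Laplace argument rigorous for a sum rather than an integral: one has to establish uniformity of the Stirling estimate across $\ell/2\le k\le\ell$, bound the contribution of indices with $\lvert k-x^{*}\ell\rvert$ large, and justify replacing the sum by the integral (for instance via Euler--Maclaurin or a local--limit estimate). An alternative, in the spirit of the other asymptotics in this section, is to work directly from $F(\mathbf{x})=\bigl(1-\prod_i(x_i+x_i^2)\bigr)^{-1}$ by multivariate singularity analysis \cite{Pemantle:2008,Raichev:2007}: by symmetry the only relevant smooth critical point in the positive orthant is $\mathbf{x}_0=\phi\mathbf{1}$, obtained from $(x_0+x_0^2)^{N}=1$, and the smooth--point transfer theorem yields the same asymptotic, the Hessian determinant of the phase parametrizing $\set{1-\prod_i(x_i+x_i^2)=0}$ near $\mathbf{x}_0$ collapsing --- because $\log\prod_i(x_i+x_i^2)$ separates the variables --- to precisely $h$; there the obstacle becomes verifying strict minimality and nondegeneracy of $\mathbf{x}_0$ and carrying out the determinant computation.
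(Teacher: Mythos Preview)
The paper does not prove this theorem; it quotes it from \cite{Eger:2015} and ends with a \qed. The surrounding discussion indicates that the intended method in that reference is multivariate singularity analysis \`a la Pemantle--Wilson / Raichev--Wilson applied to $F(\mathbf{x})=(1-\prod_i(x_i+x_i^2))^{-1}$: one locates the symmetric critical point $\mathbf{x}_0=\phi\mathbf{1}$ on the singular variety and reads off the asymptotic from the smooth--point transfer theorem.

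Your primary route is different and more elementary. You first collapse the multivariate problem to a single sum via the factorisation $\prod_i(x_i+x_i^2)^k$, obtaining $c_f(\ell,\ldots,\ell)=\sum_k\binom{k}{\ell-k}^N$, and then run a one--dimensional Laplace/saddle--point argument. All your intermediate computations check out: the stationary point $x^*=(5+\sqrt5)/10$, the identities $e^{\Phi(x^*)}=\phi^{-1}$, $\tfrac{x^*}{(1-x^*)(2x^*-1)}=\tfrac{(1+\phi)(1+2\phi)}{\phi}$, $|\Phi''(x^*)|=(2-\phi)^2\tfrac{(1+\phi)(1+2\phi)}{\phi}$, and the final reduction $-\phi A=2-\phi$ are all correct, and the resulting constant is indeed $b_0$. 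The advantage of your approach is that it avoids the machinery of multivariate asymptotics entirely; the cost is exactly the rigor issue you flag (uniformity of Stirling near the endpoints $k\approx\ell/2$ and $k\approx\ell$, and the sum--to--integral replacement), which can be handled by standard tail truncation plus a local CLT for $\binom{k}{\ell-k}$. Your alternative via multivariate singularity analysis is essentially the paper's (cited) approach.
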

\begin{example}
   For the $f$ in the last theorem, the number $c_f(9,9,9)$ equals $17,899$ while the approximation formula has $18,955.30\ldots$, which amounts to a relative error of less than $6\%$. 
\end{example}
Note that $c_f$ in the last theorem is closely related to $c_{f_{\text{prod}}}$ by Theorem \ref{theorem:fprod}, which immediatley yields another asymptotic formula.

\section{Prime criteria}\label{sec:prime}
Mann and Shanks' \cite{Mann:1972}
prime criterion states that an integer $q$ is prime if and only if $m$ divides the adjusted binomial coefficients $\binom{m}{q-2m}$ for all $m$ with $0\le 2m\le q$. This criterion can be extended to $f$-weighted integer compositions ($N=1$) when $f$ takes on the value $1$ for all elements inside the `unit sphere', that is, 0 and 1 \cite{Eger:2014,Eger:2015}. For $N\ge 1$, it is tempting to conjecture as follows.  
\begin{conjecture}\label{conjecture:mann}
  Let $f(\mathbf{x})=1$ for all $\mathbf{x}\in U_{\mathbf{0}}=\set{\mathbf{s}\in\nn^N\sd s_i\in\set{0,1}}$. Then, an integer $q>1$ is prime if and only if
  $m$ divides $\binom{m}{q\mathbf{1}-2m\mathbf{1}}_f$ for all integers $m$ with $0\le 2m\le
  q$.
\end{conjecture}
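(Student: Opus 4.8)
\emph{Plan and reduction.} The plan is to turn the statement into one about ordinary binomial coefficients and then treat the two implications separately: the forward direction will drop straight out of Theorem~\ref{theorem:divis}, and the converse will come from a short case analysis. First I would record that, since $f$ is the indicator function on $U_{\mathbf{0}}=\set{0,1}^{N}$, the generating polynomial factors, $\bigl(\sum_{\mathbf{s}}f(\mathbf{s})\mathbf{x}^{\mathbf{s}}\bigr)^{m}=\prod_{j=1}^{N}(1+x_{j})^{m}$, so by Theorem~\ref{theorem:main} one has $\binom{m}{\Bell}_{f}=\prod_{j=1}^{N}\binom{m}{\ell_{j}}$; in particular $\binom{m}{q\mathbf{1}-2m\mathbf{1}}_{f}=\binom{m}{q-2m}^{N}$. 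Thus the conjecture is equivalent to: $q>1$ is prime if and only if $m\mid\binom{m}{q-2m}^{N}$ for all $m$ with $0\le 2m\le q$, and everything reduces to controlling the prime factorization of the single integer $\binom{m}{q-2m}$.

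\emph{Forward direction.} Assume $q$ is prime. The case $m=0$ is trivial since $\binom{0}{q\mathbf{1}}_{f}=0$ by Lemma~\ref{lemma:f0} (using $q>1$). For $m\ge 1$ we have $1\le m\le q/2<q$, so $\gcd(m,q)=1$, and since any common divisor of $m$ and $q-2m$ divides $q$, also $\gcd(m,q-2m)=1$. Applying Theorem~\ref{theorem:divis} with $k=m$ and $\Bell=(q-2m)\mathbf{1}$ — all of whose components coincide, so each $t_{i}$ equals $m/\gcd(m,q-2m)=m$ — yields $m\mid\binom{m}{q\mathbf{1}-2m\mathbf{1}}_{f}$. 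This half consumes the primality hypothesis and needs nothing beyond Theorem~\ref{theorem:divis}.

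\emph{Converse (contrapositive).} Let $q>1$ be composite; the goal is an $m$ with $0\le 2m\le q$ and $m\nmid\binom{m}{q-2m}^{N}$. It suffices to produce such an $m$ together with a prime $p$ satisfying $p\mid m$ and $p\nmid\binom{m}{q-2m}$: then $v_{p}(m)\ge 1>0=N\cdot v_{p}\bigl(\binom{m}{q-2m}\bigr)$, which rules out $m\mid\binom{m}{q-2m}^{N}$ for \emph{every} $N\ge 1$ at once. I would split into three cases. (i) If $2\mid q$, take $m=q/2\ge 2$; then $\binom{m}{q-2m}=\binom{m}{0}=1$ and any prime $p\mid m$ works. (ii) If $q$ is odd and $3\mid q$, take $m=q/3\ge 3$; then $q-2m=m$, so $\binom{m}{q-2m}=\binom{m}{m}=1$, and again any $p\mid m$ works. (iii) If $\gcd(q,6)=1$, let $p$ be the least prime factor of $q$ (so $p\ge 5$), write $q=pr$ with $r=q/p\ge p$ odd, and set $m=p\mu$ with $\mu$ to be chosen in $[r/3,r/2]$. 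Then $q-2m=p(r-2\mu)$, so by Theorem~\ref{theorem:special} $\binom{m}{q\mathbf{1}-2m\mathbf{1}}_{f}\equiv\binom{\mu}{(r-2\mu)\mathbf{1}}_{f}=\binom{\mu}{r-2\mu}^{N}\pmod{p}$, and it remains to find an integer $\mu\in[r/3,r/2]$ with $p\nmid\binom{\mu}{r-2\mu}$. If $p\nmid(r-1)/2$, take $\mu=(r-1)/2$ (an integer since $r$ is odd, and in $[r/3,r/2]$ because $r\ge 3$), so $\binom{\mu}{r-2\mu}=\binom{\mu}{1}=(r-1)/2$. Otherwise $r\equiv 1\pmod{2p}$, hence $r\ge 2p+1\ge 11$, and $\mu=(r-3)/2$ lies in $[r/3,r/2]$ and satisfies $\mu\equiv p-1\pmod p$, whence $\binom{\mu}{r-2\mu}=\binom{\mu}{3}\equiv\frac{(-1)(-2)(-3)}{6}\equiv-1\pmod p$ (valid since $6$ is a unit mod $p$). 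In either subcase $p\nmid\binom{m}{q-2m}$, so $m\nmid\binom{m}{q-2m}^{N}$, as required.

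\emph{Where the work is.} The only real difficulty is case (iii): I expect to spend the effort checking that the proposed $\mu$ is genuinely an integer lying in $[r/3,r/2]$ in each subcase — this is exactly what forces $r$ odd with $r\ge 5$ in the first subcase and $r\ge 9$ in the second — and that choosing $p$ to be the \emph{smallest} prime factor of $q$ is what keeps $r$ large enough for those bounds (so that no degenerate small value of $r$ slips through). This is inequality bookkeeping rather than a new idea; if it goes through, the conjecture holds for all $N\ge 1$ simultaneously, since case (iii) never mentions $N$.
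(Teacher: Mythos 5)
The paper does not prove this statement at all: it is stated as a conjecture and explicitly left open, with only the forward direction (via Theorem~\ref{theorem:divis}) and the even case of the converse settled, and with the observation that the Mann--Shanks witness $m=(q-p)/2$ breaks down already for $N=2$, $q=55$. Your argument, as far as I can check, is a complete and correct proof of the conjecture, so it goes strictly beyond the paper. The decisive step the paper never takes is the factorization $\sum_{\mathbf{s}}f(\mathbf{s})\mathbf{x}^{\mathbf{s}}=\prod_{j}(1+x_j)$, which by Theorem~\ref{theorem:main} gives $\binom{m}{\Bell}_f=\prod_j\binom{m}{\ell_j}$ and hence $\binom{m}{q\mathbf{1}-2m\mathbf{1}}_f=\binom{m}{q-2m}^N$; after that, the whole problem is about the $p$-adic valuation of one ordinary binomial coefficient. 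Your second new idea is the right response to the paper's $q=55$ obstruction: instead of the fixed Mann--Shanks witness $m=p\cdot\frac{r-1}{2}$ (which only guarantees $m\nmid\binom{m}{q-2m}$ and can fail for the $N$-th power), you keep $m=p\mu$ but tune $\mu\in\set{\frac{r-1}{2},\frac{r-3}{2}}$ so that $\binom{\mu}{r-2\mu}\in\set{\binom{\mu}{1},\binom{\mu}{3}}$ is a \emph{unit} mod $p$, forcing $v_p\bigl(\binom{m}{q-2m}^N\bigr)=0<v_p(m)$ uniformly in $N$. I verified the bookkeeping: $r=q/p\ge p\ge5$ is odd, both candidate $\mu$'s are integers in $[r/3,r/2]$ in their respective subcases (the bound $r\ge2p+1\ge11$ in the second subcase is exactly what is needed for $\mu\ge3$), the cases $2\mid q$ and $3\mid q$ are correctly handled separately so that $6$ is invertible mod $p$ in the remaining case, and for $q=55$ your construction produces $m=20$, one of the two witnesses the paper reports finding numerically. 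The only cosmetic remark is that the congruence you actually need is $p\nmid\binom{m}{q-2m}$, which follows from the $N=1$ instance of Theorem~\ref{theorem:special} (equivalently classical Lucas); your detour through the $N$-th power is logically fine but slightly roundabout. This is a genuine resolution of the paper's open problem and would be worth writing up as such.
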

If $q$ is prime, then indeed $\binom{m}{q\mathbf{1}-2m\mathbf{1}}_f\equiv 0\pmod{m}$ for all integers $m$ with $0\le 2m\le
  q$. This is a simple consequence of Theorem
  \ref{theorem:divis}. Conversely, when $q$ is not prime, then $q$ is
  odd or even. When $q$ is even, $m=q/2$ does not divide
  $\binom{m}{q\mathbf{1}-2m\mathbf{1}}_f=\binom{m}{\mathbf{0}}_f=f(\mathbf{0})^{q/2}=1$. However,
  when $q$ is odd, the situation is more difficult. Mann and Shanks
  choose $m=(q-p)/2=pn$, for a prime divisor $p$ of $q$ and a suitable
  $n$. This choice is appropriate for $N=1$ and the stated
  requirements on $f$. However, already for
  $N=2$, we find a counter-example to this choice (when $f$ is the
  indicator function on $\set{(0,0),(0,1),(1,0),(1,1)}$). Namely, when
  $q=55$, then $p=5$ is a prime divisor of $q$ and we have $m=pn$
  where $n=5$. Then $\binom{pn}{p\mathbf{1}}_f\equiv
  \binom{pn}{p}+\frac{(pn)!}{(p!)^2(p(n-2))!}\pmod{pn}$ by Theorem
  \ref{theorem:ms} and Example \ref{example:ms}. Numerical evaluation
  shows that this sum is $\equiv 5+20\equiv 0\pmod{pn}$. However,
  while this choice is not suitable, there are others for $q=55$
  (namely $m=20,22$). We leave Conjecture \ref{conjecture:mann} as an open
  problem.   

\section{Conclusion}
Many extensions of our results are conceivable. We have shown that the basis for weighted vector compositions are sums of independent and identically distributed random vectors. Other types of compositions can be investigated in which part sizes are correlated \cite{Carlitz:1976}. The basis for this class of compositions would be sums of dependent random vectors. Many approximations both for dependent and independent sums of random variables are known, e.g., \cite{Hoeffding:1963}. How do these translate to approximation results for weighted compositions? Finally, we have generalized weighted \emph{integer} compositions to weighted \emph{vector} compositions. One could further generalize to weighted \emph{matrix} compositions or general weighted \emph{tensor} compositions, that is, compositions of arbitrary multidimensional arrays.

\section*{Acknowledgements}
The author wishes to thank the anonymous referee for helpful suggestions. 

\bibliography{lit}{}

\begin{thebibliography}{10}

\bibitem{Abrate:2014}
M.~Abrate, S.~Barbero, U.~Cerruti, and N.~Murru.
\newblock Colored compositions, invert operator and elegant compositions with
  the “black tie”.
\newblock {\em Discrete Math.}, 335:1 -- 7, 2014.

\bibitem{Agarwal:2000}
A.~K. Agarwal.
\newblock $n$-colour compositions.
\newblock {\em Indian J. Pure Appl. Math}, 31:1421--1427, 2000.

\bibitem{Anderson:2005}
P.~G. Anderson, A.~T. Benjamin, and J.~A. Rouse.
\newblock Combinatorial proofs of {Fermat's, Lucas's, and Wilson's} theorems.
\newblock {\em Amer. Math. Monthly}, 112(3):266--268, 2005.

\bibitem{Andrews:1975}
G.~E. Andrews.
\newblock The theory of partitions, ii: Simon newcom's problem.
\newblock {\em Utilitas Math.}, 7:33--54, 1975.

\bibitem{Babbage:1819}
C.~Babbage.
\newblock Demonstration of a theorem relating to prime numbers.
\newblock {\em The Edinburgh Philosophical Journal}, 1:46--49, 1819.

\bibitem{Banderier:2004}
C.~Banderier and S.~Schwer.
\newblock Why {D}elannoy numbers?
\newblock {\em J. Statist. Plann. Inference}, 135(1):40 -- 54, 2005.

\bibitem{Birmajer:2016}
D.~Birmajer, J.~Gil, and M.~D. Weiner.
\newblock Compositions colored by simplicial polytopic numbers.
\newblock {\em J. Comb.}, 9(2):221--232, 2018.

\bibitem{Bollinger:1990}
R.~Bollinger and C.~Burchard.
\newblock Lucas's theorem and some related results for extended {P}ascal
  triangles.
\newblock {\em Amer. Math. Monthly}, 97(3):198--204, Mar. 1990.

\bibitem{Bona:2010}
M.~B\'{o}na and A.~Knopfmacher.
\newblock On the probability that certain compositions have the same number of
  parts.
\newblock {\em Ann. Comb.}, 14:291--306, 2010.

\bibitem{Carlitz:1976}
L.~Carlitz.
\newblock Restricted compositions.
\newblock {\em Fibonacci Quart.}, 14:254--264, 1976.

\bibitem{Duchi:2004}
E.~Duchi and R.~Sulanke.
\newblock The $2^{n-1}$ factor for multi-dimensional lattice paths with
  diagonal steps.
\newblock {\em S\'eminaire Lotharingien de Combinatoire [electronic only]},
  51:B51c, 16 p., electronic only, 2004.

\bibitem{Eger:2013}
S.~Eger.
\newblock Restricted weighted integer compositions and extended binomial
  coefficients.
\newblock {\em J. Integer Seq.}, 16, 2013.

\bibitem{Eger:2014}
S.~Eger.
\newblock A proof of the {Mann-Shanks} primality criterion conjecture for
  extended binomial coefficients.
\newblock {\em Integers}, 2014.

\bibitem{Eger:2014_central}
S.~Eger.
\newblock Stirling's approximation for central extended binomial coefficients.
\newblock {\em Amer. Math. Monthly}, 121(4):344--349, 2014.

\bibitem{Eger:2016_align}
S.~Eger.
\newblock On the number of many-to-many alignments of multiple sequences.
\newblock {\em J. Autom. Lang. Comb.}, 20(1):53--65, 2015.

\bibitem{Eger:2016}
S.~Eger.
\newblock Some elementary congruences for the number of weighted integer
  compositions.
\newblock {\em J. Integer Seq.}, 18, 2015.

\bibitem{Eger:2015}
S.~Eger.
\newblock Identities for partial {B}ell polynomials derived from identities for
  weighted integer compositions.
\newblock {\em Aequationes Math.}, 90(2):299--306, 2016.

\bibitem{Flajolet:2009}
P.~Flajolet and R.~Sedgewick.
\newblock {\em Analytic Combinatorics}.
\newblock Cambridge University Press, New York, NY, USA, 1 edition, 2009.

\bibitem{Granville:1997}
A.~Granville.
\newblock Arithmetic properties of binomial coefficients i: Binomial
  coefficients modulo prime powers.
\newblock In {\em Canadian Mathematical Society Conference Proceedings},
  volume~20, pages 253--275, 1997.

\bibitem{Griggs:1990}
J.~R. Griggs, P.~Hanlon, A.~M. Odlyzko, and M.~S. Waterman.
\newblock On the number of alignments of \emph{k} sequences.
\newblock {\em Graphs Combin.}, 6(2):133--146, 1990.

\bibitem{Guo:2014}
Y.~Guo.
\newblock Some $n$-color compositions.
\newblock {\em J. Integer Seq.}, 15, 2012.

\bibitem{Hardy:2006}
M.~Hardy.
\newblock Combinatorics of partial derivatives.
\newblock {\em Electr. J. Comb.}, 13(1), 2006.

\bibitem{Heubach:2004}
S.~Heubach and T.~Mansour.
\newblock Compositions of $n$ with parts in a set.
\newblock {\em Congr. Numer.}, 168:253--266, 2004.

\bibitem{Hoeffding:1963}
W.~Hoeffding.
\newblock Probability inequalities for sums of bounded random variables.
\newblock {\em J. Amer. Statist. Assoc.}, 58(301):13--30, March 1963.

\bibitem{Janjic:2016}
M.~Janji\'c.
\newblock Binomial coefficients and enumeration of restricted words.
\newblock {\em J. Integer Seq.}, 19, 2016.

\bibitem{Johnson:2002}
W.~P. Johnson.
\newblock The curious history of {Fa{\`{a}} di Bruno's} formula.
\newblock {\em Amer. Math. Monthly}, 109(3):217--234, 2002.

\bibitem{Levenshtein:1966}
V.~I. {Levenshtein}.
\newblock {Binary Codes Capable of Correcting Deletions, Insertions and
  Reversals}.
\newblock {\em Soviet Physics Doklady}, 10:707, Feb. 1966.

\bibitem{Mann:1972}
H.~B. Mann and D.~Shanks.
\newblock A necessary and sufficient condition for primality, and its source.
\newblock {\em J. Combin. Theory Ser. A}, 13(1):131 -- 134, 1972.

\bibitem{Moser:1961}
L.~Moser and E.~Whitney.
\newblock Weighted compositions.
\newblock {\em Can. Math. Bull.}, 4:39--43, 1961.

\bibitem{Mukho:2016}
N.~Mukhopadhyay and M.~Son.
\newblock Stirling's formula for gamma functions, bounds for ratios of gamma
  functions, beta functions and percentiles of a studentized sample mean: A
  synthesis with new results.
\newblock {\em Methodol. Comput. Appl. Probab.}, 18:1117--1127, 2016.

\bibitem{Munarini:2008}
E.~Munarini, M.~Poneti, and S.~Rinaldi.
\newblock Matrix compositions.
\newblock {\em J. Integer Seq.}, 2009.

\bibitem{Narang:2008}
G.~Narang and A.~Agarwal.
\newblock Lattice paths and $n$-color compositions.
\newblock {\em Discrete Math.}, 308:1732--1740, 2008.

\bibitem{Neuschel:2014}
T.~Neuschel.
\newblock A note on extended binomial coefficients.
\newblock {\em J. Integer Seq.}, 17, 2014.

\bibitem{Pemantle:2008}
R.~Pemantle and M.~C. Wilson.
\newblock Twenty combinatorial examples of asymptotics derived from
  multivariate generating functions.
\newblock {\em {SIAM} Review}, 50(2):199--272, 2008.

\bibitem{Raichev:2007}
A.~Raichev and M.~C. Wilson.
\newblock A new method for computing asymptotics of diagonal coefficients of
  multivariate generating functions.
\newblock In {\em 2007 Conference on Analysis of Algorithms, AofA 07}, pages
  439--449, 2007.

\bibitem{Ratsaby:2008}
J.~Ratsaby.
\newblock Estimate of the number of restricted integer-partitions.
\newblock {\em Appl. Anal. Discrete Math.}, pages 222--233, 2008.

\bibitem{Razpet:2002}
M.~Razpet.
\newblock The {Lucas} property of a number array.
\newblock {\em Discrete Math.}, 248(1):157 -- 168, 2002.

\bibitem{Ricci:1931}
G.~Ricci.
\newblock Sui coefficienti binomiale e polinomali.
\newblock {\em Giornale di Matematiche (Battaglini)}, 69:9--12, 1931.

\bibitem{Agarwal:2016}
R.~Sachdeva and A.~Agarwal.
\newblock Combinatorics of certain restricted $n$-color composition functions.
\newblock {\em Discrete Math.}, 340:361--372, 2016.

\bibitem{Shapcott:2012}
C.~Shapcott.
\newblock $\mathcal{C}$-color compositions and palindromes.
\newblock {\em Fibonacci Quart.}, 50:297--303, 2012.

\bibitem{Shapcott:2013}
C.~Shapcott.
\newblock New bijections from $n$-color compositions.
\newblock {\em J. Comb.}, 4:373--385, 2013.

\bibitem{Slowinski:1998}
J.~B. Slowinski.
\newblock The number of multiple alignments.
\newblock {\em Molecular Phylogenetics and Evolution}, 10(2):264 -- 266, 1998.

\bibitem{Somer:1987}
L.~Somer.
\newblock Congruence relations for $k$th-order linear recurrences.
\newblock {\em Fibonacci Quart.}, 27:25--33, 1987.

\bibitem{Stanley:2012}
R.~Stanley.
\newblock {\em Enumerate Combinatorics}, volume~1.
\newblock Cambridge University Press, 2 edition, 2012.

\bibitem{Sun:2007}
Z.-W. Sun and R.~Tauraso.
\newblock Congruences for sums of binomial coefficients.
\newblock {\em J. Number Theory}, 126(2):287 -- 296, 2007.

\bibitem{Hoggatt:1968}
J.~V.~E.~Hoggatt and D.~A. Lind.
\newblock Fibonacci and binomial properties of weighted compositions.
\newblock {\em J. Combin. Theory}, 4:121--124, 1968.

\bibitem{Hoggatt:1969}
J.~V.~E.~Hoggatt and D.~A. Lind.
\newblock Compositions and {F}ibonacci numbers.
\newblock {\em Fibonacci Quart.}, 7:253--266, 1969.

\bibitem{Vignat:2017}
C.~Vignat and T.~Wakhare.
\newblock Woon's tree and sums over compositions.
\newblock {\em J. Integer Seq.}, 21, 2018.

\bibitem{Walsh:1995}
D.~P. Walsh.
\newblock Equating {P}oisson and normal probability functions to derive
  {S}tirling's formula.
\newblock {\em Amer. Statist.}, 49(3):270--271, 1995.

\bibitem{Yang:2000}
W.~C. Yang.
\newblock Derivatives are essentially integer partitions.
\newblock {\em Discrete Math.}, 222(1):235 -- 245, 2000.

\end{thebibliography}
\bibliographystyle{abbrv}

\end{document}